\titlespacing\section{0pt}{14pt plus 4pt minus 2pt}{6pt plus 2pt minus 2pt}
\titlespacing\subsection{0pt}{12pt plus 4pt minus 2pt}{4pt plus 2pt minus 2pt}
\titlespacing\subsubsection{0pt}{8pt plus 4pt minus 2pt}{4pt plus 2pt minus 2pt}
\titlespacing\paragraph{0pt}{6pt plus 4pt minus 2pt}{6pt plus 2pt minus 2pt}
\title{Trajectory of Alternating Direction Method of Multipliers \\ and Adaptive Acceleration}
\author{Clarice Poon\thanks{Department of Mathematics, University of Bath, Bath, UK. E-mail: \texttt{cmhsp20@bath.ac.uk} }
		\and
		Jingwei Liang\thanks{DAMTP, University of Cambridge, Cambridge, UK. E-mail: \texttt{jl993@cam.ac.uk} }}
\date{}
\begin{document}

	\setlength{\abovedisplayskip}{5pt}
	\setlength{\belowdisplayskip}{3pt}

\maketitle

\begin{abstract}
The alternating direction method of multipliers (ADMM) is one of the most widely used first-order optimisation methods in the literature owing to its simplicity, flexibility and efficiency. Over the years, numerous efforts are made to improve the performance of the method, such as the inertial technique. 
By studying the geometric properties of ADMM, we discuss the limitations of current inertial accelerated ADMM and then present and analyze an adaptive acceleration scheme for the method. Numerical experiments on problems arising from image processing, statistics and machine learning demonstrate the advantages of the proposed acceleration approach. 
\end{abstract}

\section{Introduction}
\label{sec:introduction}


Consider the following constrained and composite optimisation problem
\beq\label{eq:problem-admm}\tag{$\calP_{\mathrm{ADMM}}$}
\min_{x \in \bbR^n , y\in \bbR^m } ~ R(x) + J(y)  \quad
\textrm{such~that} \quad Ax + By = b  ,
\eeq
where the following basic assumptions are imposed
\begin{enumerate}[leftmargin=4em,label= ({${\bcA}$\textbf{.\arabic{*}})},ref= ${\bcA}$\textbf{.\arabic{*}}]
\item \label{ADMM:RJ} $R\in \lsc\pa{\bbR^n}$ and $J \in \lsc\pa{\bbR^m}$ are proper convex and lower semi-continuous functions.
\item \label{ADMM:AB} $A : \bbR^n \to \bbR^p $ and $B : \bbR^m \to \bbR^p$ are {injective} linear operators.
\item \label{ADMM:minimizers} $\ri\pa{\dom(R) \cap \dom(J)} \neq \emptyset$, and the set of minimizers is non-empty.
\end{enumerate}
Over the past years, problem \eqref{eq:problem-admm} has attracted a great deal of interests as it covers many important problems arising from data science, machine learning, statistics, inverse problems and imaging science, etc.; See Section \ref{sec:experiment} for examples. 
In the literature, different numerical schemes are proposed to handle the problem, among them the alternating direction method of multipliers (ADMM) is the most prevailing one. 


Earlier works of ADMM include \cite{glowinski1975approximation,gabay1975dual,gabay1983chapter,eckstein1992douglas}, and recently it has gained increasing popularity, in part due to \cite{boyd2011distributed}. 
To derive ADMM, first consider the augmented Lagrangian associated to \eqref{eq:problem-admm} which reads 
\[ 
\calL(x,y; \psi) \eqdef R(x) + J(y) + \iprod{\psi}{Ax+By-b} + \sfrac{\gamma}{2}\norm{Ax+By-b}^2 , 
\] 
where $\gamma>0$ and $\psi \in \bbR^p$ is the Lagrangian multiplier. 
To find a saddle-point of $\calL(x,y; \psi) $, ADMM applies the following iteration
\beq\label{eq:admm}
\begin{aligned}
\xk &= \argmin_{x\in\bbR^n }~ R(x) + \tfrac{\gamma}{2} \norm{Ax+B\ykm-b + \tfrac{1}{\gamma}\psikm }^2 , \\
\yk &= \argmin_{y\in\bbR^m }~ J(y) + \tfrac{\gamma}{2} \norm{A\xk+By-b + \tfrac{1}{\gamma}\psikm }^2 , \\
\psik &= \psikm + \gamma\pa{A\xk + B\yk - b}  .
\end{aligned}
\eeq
By defining a new point $\zk \eqdef \psikm + \gamma A\xk$, we can rewrite ADMM iteration \eqref{eq:admm} into the following form
\beq\label{eq:admm2}
\begin{aligned}
\xk &= \argmin_{x\in\bbR^n }~ R(x) + \sfrac{\gamma}{2} \norm{Ax - \tfrac{1}{\gamma} \pa{ \zkm - 2\psikm } }^2 , \\
\zk &= \psikm + \gamma A\xk   ,   \\
\yk &= \argmin_{y\in\bbR^m }~ J(y) + \sfrac{\gamma}{2} \norm{By + \tfrac{1}{\gamma} \pa{ \zk - \gamma b } }^2 , \\
\psik &= \zk + \gamma\pa{B\yk - b}  .
\end{aligned}
\eeq
For the rest of the paper, we shall focus on the above four-point formulation of ADMM. 
In the literature, it is well known that ADMM is equivalent to applying Douglas--Rachford splitting \cite{douglas1956numerical} to the dual problem of \eqref{eq:problem-admm} \cite{gabay1983chapter} which reads
\beq\label{eq:problem-admm-dual}\tag{$\calD_{\mathrm{ADMM}}$}
\max_{\psi\in\bbR^p } - \Pa{R^*(-A^T \psi) + J^*(-B^T \psi) + \iprod{\psi}{b} } ,
\eeq
where $R^*(v) \eqdef \sup_{x\in\bbR^n} ~ \pa{ \iprod{x}{v} - R(x) } $ is the Fenchel conjugate, or simply conjugate, of $R$. The corresponding iteration of Douglas--Rachford splitting reads
\[
\begin{aligned}
\uk &= \argmin_{u\in\bbR^p}~ \gamma R^*(-A^T u) + \sfrac{1}{2} \norm{u - (2\psikm - \zkm)}^2  , \\
\zk &= \zkm + \uk - \psikm , \\
\psik &= \argmin_{\psi\in\bbR^p}~ \gamma J^*(-B^T \psi) + \iprod{\psi}{\gamma b} + \sfrac{1}{2} \norm{\psi - \zk}^2 ,
\end{aligned}
\]
where $\zk$ is exactly the same one of \eqref{eq:admm2}. 
The above iteration can be written as the fixed-point iteration of $\zk$, that is
\beq\label{eq:fp-dr}
\zk = \fDR (\zkm) ,
\eeq
with $\fDR$ being the fixed-point operator. We refer to Appendix \ref{P-sec:tra-admm} for the expression of $\fDR$ and more discussions between the equivalence between ADMM and Douglas--Rachford splitting. 
Based on such an equivalence, the convergence property of ADMM has been well studied in the literature, we refer to \cite{he20121,eckstein2012augmented,deng2016global,hong2017linear} and the references therein. 
 In \cite{bauschke2014local}, $\uk, \psik$ are called the ``\emph{shadow sequences}'' of Douglas--Rachford splitting, in this paper, following the terminology, we shall call $\xk, \yk$ of \eqref{eq:admm} the shadow sequences of ADMM. 


\subsection{Contributions}
The contribution of our paper is threefold. First, 
for the sequence $\seq{\zk}$ of \eqref{eq:admm2}, we show that it has two different types of trajectory:
\begin{itemize}[leftmargin=2em]
\item When both $R, J$ are non-smooth functions, under the assumption that they are partly smooth (see Definition \ref{dfn:psf}), we show that the eventual trajectory of $\seq{\zk}$ is approximately a spiral which can be characterized precisely if $R, J$ are moreover locally polyhedral around the solution. 

\item When at least one of $R, J$ is smooth, we show that depends on the choice of $\gamma$, the eventual trajectory of $\seq{\zk}$ can be either straight line or spiral. 
\end{itemize}
%
Second, based on trajectory of $\seq{\zk}$, we discuss the limitations of the current combination of ADMM and inertial acceleration technique. In Section \ref{sec:fail-inertial}, we distinguish the situations where inertial acceleration will work and when it fails. More precisely, we find that inertial technique will work if the trajectory of $\seq{\zk}$ is or close to a straight line, and will fail if the trajectory is a spiral. 
%

Our core contribution is an adaptive acceleration for ADMM, which is inspired by the trajectory of ADMM and dubbed ``A$\!^3$DMM''. The limitation of inertial technique, particularly its failure, implies that the right acceleration scheme should be able to follow the trajectory of the iterates. In Section \ref{sec:ada-admm}, we propose an adaptive extrapolation scheme for accelerating ADMM which is able to following the trajectory of the generated sequence. 
Our proposed A$\!^3$DMM belongs to the realm of extrapolation method, and provides an alternative geometrical interpretation for polynomial extrapolation methods such as Minimal Polynomial Extrapolation (MPE) \cite{cabay1976polynomial} and Reduced Rank Extrapolation (RRE) \cite{eddy1979extrapolating,mevsina1977convergence}.

\subsection{Related works} 
Over the past decades, owing to the tremendous success of inertial acceleration \cite{nesterov83,fista2009}, the inertial technique has been widely adapted to accelerate other first-order algorithms. In terms of ADMM, related work can be found in \cite{pejcic2016accelerated,kadkhodaie2015accelerated,pmlrv80franca18a}, either from proximal point algorithm perspective or continuous dynamical system. However, to ensure that inertial acceleration works, stronger assumptions are imposed on $R, J$ in \eqref{eq:problem-admm}, such as smooth differentiability or strong convexity. When it comes to general non-smooth problems, these works may fail to provide acceleration. {Recently in \cite{franca2018dynamical}, an $O(1/k^{2})$ convergence rate is established for ADMM using Nesterov acceleration, however the result holds only for the continuous dynamical system while the discrete-time optimization scheme remains unavailable.}

For more generic acceleration techniques, there are extensive works in numerical analysis on the topic of convergence acceleration for sequences. The goal of convergence acceleration is, given an arbitrary sequence $\seq{\zk}\subset\RR^n$ with limit $\zsol$, finding a transformation  $\Ee_k : \{z_{k-j}\}_{j=1}^q\to  \zbark \in \RR^n$ such that $\zbark$ converges faster to $\zsol$. In general, the process by which $\{\zk\}$ is generated is unknown, $q$ is chosen to be a small integer, and $\zbark$ is referred to as the extrapolation of $\zk$. Some of the best known examples  include Richardson's extrapolation \cite{richardson1927viii}, the $\Delta^2$-process of Aitken \cite{aitken1927xxv} and Shank's algorithm \cite{shanks1955non}. 
We refer to \cite{brezinski2001convergence,brezinski2013extrapolation,sidi2003practical} and references therein for a detailed historical perspective on the development of these techniques. 
Much of the works on the extrapolation of vector sequences was initiated by Wynn \cite{wynn1962acceleration} who generalized the work of Shank to vector sequences.  
In the appendix, the formulation of some of these methods  are provided. In particular,  minimal polynomial extrapolation (MPE) \cite{cabay1976polynomial} and Reduced Rank Extrapolation (RRE) \cite{eddy1979extrapolating,mevsina1977convergence} (which is also a variant of Anderson acceleration developed independently in \cite{Anderson}), which are particularly relevant to this present work (see Section \ref{sec:acc-gua} for brief discussion). 

More recently, there has been a series of work on a regularized version of RRE stemming from \cite{scieur2016regularized}. We remark however that the regularisation parameter in these works rely on a grid search based on objective function, their applicability to the general ADMM setting is unclear.

\paragraph{Notations}
Denote $\bbR^n$ a $n$-dimensional Euclidean space equipped with scalar inner product $\iprod{\cdot}{\cdot}$ and norm $\norm{\cdot}$. $\Id$ denotes the identity operator on $\bbR^n$. $\lsc(\bbR^n)$ denotes the class of proper convex and lower-semicontinuous functions on $\bbR^n$. 
For a nonempty convex set $S \subset \bbR^n$, denote $\ri(S)$ its relative interior, $\LinHull(S)$ the smallest subspace parallel to $S$ and $\PT{S}$ the projection operator onto $S$. 
The sub-differential of a function $R \in \lsc(\bbR^n)$ is a set-valued mapping defined by $\partial R (x) \eqdef \Ba{ g\in\bbR^n | R(x') \geq R(x) + \iprod{g}{x'-x} , \forall x' \in \bbR^n }$. 
The spectral radius of a matrix $M$ is denoted by $\rho(M)$.

\paragraph{Organization}
The rest of paper is organized as following: in Section \ref{sec:trajectory-admm} we study the trajectory of ADMM for the sequence $\seq{\zk}$. The limitation of the inertial technique is discussed in Section \ref{sec:fail-inertial}. Then in Section \ref{sec:ada-admm}, we propose the adaptive acceleration scheme for ADMM followed by detailed discussions provided in Section \ref{sec:discussion}. Numerical experiments from machine learning and imaging are provided in Section \ref{sec:experiment}. In the appendix, we first provide extra discussions on why inertial fails, and then the proofs of main propositions.

\section{Trajectory of ADMM}
\label{sec:trajectory-admm}

In this section, we discuss the trajectory of the sequence $\seq{\zk}$ generated by ADMM based on the concept ``partial smoothness'' which was first introduced in \cite{LewisPartlySmooth}.

\subsection{Partial smoothness}

The difficulty of analyzing the trajectory, or in general the behaviors (\eg convergence rate), of ADMM is that the iteration of the method is non-linear. We need a proper tool of (locally) linearize the iteration of ADMM, and partial smoothness provides a powerful framework to achieve the goal. 
Let $\calM \subset \bbR^n$ be a $C^2$-smooth Riemannian manifold, denote $\tanSp{\calM}{x}$ the tangent space of $\calM$ at a point $x \in \calM$.

\begin{definition}[{Partly smooth function \cite{LewisPartlySmooth}}]\label{dfn:psf}
A function $R \in \lsc(\bbR^n)$ is partly smooth at $\xbar$ relative to a set $\calM_{\xbar}$ if $\partial R(\xbar) \neq \emptyset$ and $\calM_{\xbar}$ is a $C^2$ manifold around $\xbar$, and moreover
\begin{itemize}[label={}, leftmargin=2.5cm]
\item[\bf Smoothness] \label{PS:C2}
$R$ restricted to $\calM_{\xbar}$ is $C^2$ around $\xbar$.
%
%
\item[\bf Sharpness] \label{PS:Sharp}
The tangent space $\tanSp{\calM_{\xbar}}{\xbar} = \LinHull\pa{\partial R(\xbar)}^\bot$.
\item[\bf Continuity] \label{PS:DiffCont}
The set-valued mapping $\partial R$ is continuous at $x$ relative to $\calM_{\xbar}$.
\end{itemize}
\end{definition}
{\noindent}The class of partly smooth functions at $\xbar$ relative to $\calM_{\xbar}$ is denoted as $\PSF{\xbar}{\calM_{\xbar}}$.
Popular examples of partly smooth functions can be found in \cite[Chapter 5]{liang2016thesis}.
Loosely speaking, a partly smooth function behaves \emph{smoothly} along $\calM_{\xbar}$, and \emph{sharply} normal to it. 
The essence of partial smoothness is that the behaviour of the function and of its minimizers depend essentially on its restriction to this manifold, hence providing the possibilities to study the trajectory of sequences.


\subsection{Trajectory of sequence $\zk$}\label{subsec:tra-admm}

Next we discuss the trajectory of ADMM in terms of $\seq{\zk}$. The iteration of ADMM is non-linear in general owing to the non-linearity of the proximity mappings. However, when $R, J$ are partly smooth, the local $C^2$-smoothness allows us to linearize the proximity mappings, hence the ADMM iteration. In turn, this allows us to study the trajectory of sequence generated by the method. 
We denote $(\xsol, \ysol, \psisol)$ a saddle-point of $\calL(x,y; \psi)$ and let $\zsol = \psisol + \gamma A\xsol$.

Denote $\vk \eqdef  \zk-\zkm$ and let $\theta_k \eqdef \arccos\Pa{ \frac{\iprod{\vk}{\vkm}}{\norm{\vk}\norm{\vkm}} }$ be the angle between $\vk, \vkm$. We shall use $\seq{\theta_{k}}$ to characterize the trajectory of $\seq{\zk}$. 
Given a saddle point $(\xsol, \ysol, \psisol)$, the corresponding KKT condition entails $- A^T \psisol \in {\partial R(\xsol)}  $ and $- B^T \psisol \in {\partial J(\ysol)} $, below we impose that
\beq\label{eq:ndc-admm}\tag{$\textrm{ND}$}
- A^T \psisol \in \ri\Pa{\partial R(\xsol)}  
\qandq
- B^T \psisol \in \ri\Pa{\partial J(\ysol)}   .
\eeq


\subsubsection{Both {$R, J$} are non-smooth}
Let $\MmRx, \MmJy$ be two smooth manifolds around $\xsol, \ysol$ respectively, and suppose $R \in \PSF{\xsol}{\MmRx}, J \in \PSF{\ysol}{\MmJy}$ are partly smooth. Denote $T_{\xsol}^R, T_{\ysol}^J$ the tangent spaces of $\MmRx, \MmJy$ at $\xsol, \ysol$, respectively. 
Let $A_{R} \eqdef A \circ \PTR{\xsol}, B_J \eqdef B \circ \PTJ{\ysol}$ and $T_{A_R}, T_{B_J}$ be the range of $A_R, B_J$ respectively. Denote $(\alpha_j)_{j=1,..., \min\ba{\dim(T_{A_R}), \dim(T_{B_J})}}$ the principal angles (see Definition \ref{def:principal_angle}) between $T_{A_R}, T_{B_J}$, and let $\alpha_F, \alpha'$ be the smallest and 2nd smallest of all non-zero~$\alpha_j$.

\begin{theorem}\label{prop:trajectory-admm}%
For problem \eqref{eq:problem-admm} and ADMM iteration \eqref{eq:admm2}, assume that conditions \iref{ADMM:RJ}-\iref{ADMM:minimizers} are true, then $(\xk,\yk,\psik)$ converges to a saddle point $(\xsol, \ysol, \psisol)$ of $\calL(x,y; \psi)$. 
Suppose that $R \in \PSF{\xsol}{\MmRx}, J \in \PSF{\ysol}{\MmJy}$ and condition \eqref{eq:ndc-admm} holds, then
\begin{enumerate}[label={\rm (\roman{*})}, leftmargin=3em]
\item 
There exists a matrix $\mADMM$ such that $\vk = \mADMM \vkm + o(\norm{\vkm})$ holds for all $k$ large enough. 

\item If moreover, $R, J$ are locally polyhedral around $\xsol, \ysol$, then $\vk = \mADMM \vkm$ with $\mADMM$ being normal and having eigenvalues of the form $\cos(\alpha_j) e^{\pm \mathrm{i} \alpha_j}$, and $\cos(\theta_k) = \cos(\alpha_F) + O(\eta^{2k})$ with $\eta = \cos(\alpha')/\cos(\alpha_F)$. 
\end{enumerate}
\end{theorem}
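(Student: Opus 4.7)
The plan is to exploit the ADMM--Douglas--Rachford (DR) equivalence recalled in the excerpt. Convergence of $(\xk,\yk,\psik)$ to a saddle point follows from the standard DR convergence theorem applied to the dual \eqref{eq:problem-admm-dual}, combined with the injectivity of $A,B$ in \iref{ADMM:AB} which recovers the primal shadows uniquely from the dual iterate $\zk$. The key structural step is then to establish \emph{finite identification}: under the non-degeneracy condition \eqref{eq:ndc-admm}, partial smoothness implies that for all sufficiently large $k$, $\xk \in \MmRx$ and $\yk \in \MmJy$. I would prove this by transferring the known identification property of DR to the ADMM shadow sequences, using that $-A^T \psik \to -A^T \psisol \in \ri\pa{\partial R(\xsol)}$ (and similarly for $J$), together with the sub-differential continuity clause of Definition \ref{dfn:psf}.

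Once identification is in place, for part (i) I would locally linearize ADMM around $(\xsol,\ysol,\psisol,\zsol)$ as follows. The optimality conditions for the $\xk$- and $\yk$-subproblems become smooth Riemannian gradient equations on the active manifolds; since $R|_{\MmRx}$ and $J|_{\MmJy}$ are $C^2$, the Riemannian implicit function theorem yields that $\xk - \xsol$ and $\yk - \ysol$ are $C^1$ functions of $(\zkm - \zsol, \psikm - \psisol)$ whose differentials at the solution factor through the tangent projections $\PTR{\xsol}$ and $\PTJ{\ysol}$. Substituting these first-order expansions into the $\zk$-update in \eqref{eq:admm2} and forming $\vk = \zk - \zkm$ produces $\vk = \mADMM \vkm + o(\norm{\vkm})$, where $\mADMM$ is an explicit composition built from $A_R$, $B_J$ and the tangent projections---equivalently the Jacobian of the DR fixed-point operator $\fDR$ at $\zsol$.

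For part (ii), local polyhedrality makes $\MmRx, \MmJy$ \emph{affine} subspaces with $R, J$ affine along them, so the linearization above is exact with zero remainder: $\vk = \mADMM \vkm$ for all large $k$. In this regime $\mADMM$ reduces to the Douglas--Rachford operator acting on the two subspaces $T_{A_R}$ and $T_{B_J}$, for which a standard spectral computation shows $\mADMM$ is normal with eigenvalues $\cos(\alpha_j) e^{\pm \mathrm{i} \alpha_j}$ indexed by the principal angles. The angle asymptotics then follow by expanding $\vkm = \mADMM^{k-1} v_1$ in the orthonormal eigenbasis afforded by normality: the dominant eigenpair $\cos(\alpha_F) e^{\pm\mathrm{i}\alpha_F}$ contributes exactly $\cos(\alpha_F)$ to the normalized inner product $\iprod{\vk}{\vkm}/\pa{\norm{\vk}\norm{\vkm}}$, while each sub-dominant eigenpair is suppressed by a factor $\pa{\cos(\alpha_j)/\cos(\alpha_F)}^{2k} \leq \eta^{2k}$, producing the stated correction.

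The hard part is the derivation and spectral analysis of $\mADMM$ in the second paragraph: one must be careful that the linearization takes place on the correct ambient subspace (the sum of the ranges of the tangent projections and their complements) so that in the polyhedral limit $\mADMM$ cleanly reduces to DR-between-subspaces and inherits normality with the advertised principal-angle spectrum. Everything else---finite identification, the exact linearity in the polyhedral case, and the asymptotic inner-product calculation---then follows from standard machinery once this step is secured.
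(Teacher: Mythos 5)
Your plan follows the paper's proof at every structural level: convergence via the ADMM--Douglas--Rachford equivalence, finite identification under \eqref{eq:ndc-admm} via the subgradient-distance argument and Hare--Lewis identification, local linearization on the identified manifolds to obtain $\vk = \mADMM\vkm + o(\norm{\vkm})$, the reduction to a two-subspace Douglas--Rachford operator in the polyhedral case, and an asymptotic inner-product expansion in the orthogonal eigenbasis afforded by normality. The two places where your sketch glosses over something the paper does carefully are worth flagging.

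First, the linearization step. You invoke a Riemannian implicit function theorem and write the expansion around the limit point, which at face value yields $\zk-\zsol = \mADMM(\zkm-\zsol) + o(\norm{\zkm-\zsol})$. This is \emph{not} the same as $\vk = \mADMM\vkm + o(\norm{\vkm})$, since $\norm{\vkm}$ can be much smaller than $\norm{\zkm-\zsol}$ when the iterates spiral around the solution. The paper avoids this by proving Lemma~\ref{lem:lin-generalised-ppa} directly \emph{in difference form}: it Taylor-expands the Riemannian gradient between two consecutive iterates (via Lemma~\ref{lem:taylor-expn} and the parallel transport estimate Lemma~\ref{lem:parallel-translation}), yielding $A_R(\xk-\xkm) = M_{\barR}(\wk-\wkm) + o(\norm{\wk-\wkm})$. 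Your route can still be made to work---the $C^2$-smoothness of $R|_{\MmRx}$ makes the implicit map $C^1$ on a neighbourhood, and a mean-value inequality then converts the pointwise expansion into one between iterates---but this conversion must be stated, and is not automatic from differentiability at a single point. (A smaller slip: the stationary subgradient for the $x$-subproblem is $-A^T u_k$ with $u_k = \psi_{k-1} + \gamma(A\xk + B\ykm - b)$, not $-A^T\psi_k$; both converge to $-A^T\psisol$, but the identification argument should reference the correct one.)

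Second, the spectral step in (ii). The normal form of $\mADMM$ has, besides the $2\times 2$ spiral blocks $\cos(\alpha_j)\!\begin{bmatrix}\cos\alpha_j & \sin\alpha_j\\ -\sin\alpha_j & \cos\alpha_j\end{bmatrix}$, a zero block and an \emph{identity} block $\Id_{n-p-q}$. Your expansion of $\vkm = \mADMM^{k-1}v_1$ implicitly assumes the dominant contribution is $\cos(\alpha_F)e^{\pm\mathrm{i}\alpha_F}$, but that is only true if $v_1$ has no component in $\ker(\Id-\mADMM)$; otherwise $\cos(\theta_k)\to 1$. The paper gets this for free by noting that $\vk\to 0$ forces $\mADMMinf v_0 = 0$, where $\mADMMinf = \lim_k\mADMM^k$, and then works with $\tmADMM = \mADMM - \mADMMinf$ whose spectral radius is $\cos(\alpha_F)<1$. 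With those two repairs your argument matches the paper's.
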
 
%


\begin{remark}
The result indicates that, when both $R, J$ are locally polyhedral, the trajectory of $\seq{\zk}$ is a spiral. 
For the case $R,J$ being general partly smooth functions, though we cannot prove, numerical evidence shows that the trajectory of $\seq{\zk}$ could be either straight line or spiral; See Section \ref{sec:experiment} for evidences. 
\end{remark}

\subsubsection{$R$ or/and $J$ is smooth}
Now consider the case that at least one function out of $R, J$ is smooth. For simplicity, consider that $R$ is smooth and $J$ remains non-smooth. We have the following result. 
%

\begin{proposition}\label{prop:trajectory-admm-real}
For problem \eqref{eq:problem-admm} and ADMM iteration \eqref{eq:admm2}, assume that conditions \iref{ADMM:RJ}-\iref{ADMM:minimizers} are true, then $(\xk,\yk,\psik)$ converges to a saddle point $(\xsol, \ysol, \psisol)$ of $\calL(x,y; \psi)$. 
Suppose $R$ is locally $C^2$ around $\xsol$, $J \in \PSF{\ysol}{\MmJy}$ is partly smooth and condition \eqref{eq:ndc-admm} holds for $J$, then Theorem \ref{prop:trajectory-admm}(i) holds for all $k$ large enough. 
{If moreover, $A$ is full rank square matrix, then all the eigenvalues of $\mADMM$ are real for $\gamma > \norm{(A^T A)^{-\frac{1}{2}} {\nabla^2 R(\xsol)} (A^T A)^{-\frac{1}{2}}}$.} 
\end{proposition}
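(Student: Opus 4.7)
The convergence of $(\xk, \yk, \psik)$ to a saddle-point follows from the standard analysis under \iref{ADMM:RJ}-\iref{ADMM:minimizers} via the Douglas--Rachford equivalence, so the plan focuses on the remaining claims. The first step is to observe that a $C^2$ function $R$ at $\xsol$ is partly smooth relative to the trivial manifold $\MmRx = \bbR^n$: the tangent space is all of $\bbR^n$, $\partial R(\xsol) = \{\nabla R(\xsol)\}$ is a singleton and hence lies trivially in its own relative interior. Consequently $A_R = A$, and Theorem~\ref{prop:trajectory-admm}(i) applies and yields the matrix $\mADMM$ with $\vk = \mADMM \vkm + o(\norm{\vkm})$, proving the first half of the proposition.

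For the last assertion I need the explicit form of $\mADMM$ in this mixed (smooth $R$, partly smooth $J$) regime. Writing $\delta\xk \eqdef \xk - \xsol$ and similarly for the other variables, differentiating the optimality condition of the $\xk$-update gives $(\nabla^2 R(\xsol) + \gamma A^T A)\,\delta\xk = A^T(\delta\zkm - 2\delta\psikm) + o(\norm{\cdot})$. Setting $H_R \eqdef \nabla^2 R(\xsol) + \gamma A^T A$ and $Q \eqdef \gamma A H_R^{-1} A^T$, I obtain $\delta\zk = Q\,\delta\zkm + (I - 2Q)\,\delta\psikm + o(\norm{\cdot})$. For the $\yk$-step, partial smoothness of $J$ together with the nondegeneracy \eqref{eq:ndc-admm} forces finite identification of $\yk$ onto $\MmJy$ for $k$ large, after which the Riemannian second-order analysis of the subproblem (in the same spirit as in Theorem~\ref{prop:trajectory-admm}(i)) gives $\delta\psik = (I - P'_J)\,\delta\zk + o(\norm{\cdot})$, where $P'_J \eqdef \gamma B_J H_J^{-1} B_J^T$ and $H_J \eqdef \mathrm{Hess}_{\MmJy} J(\ysol) + \gamma B_J^T B_J$ on $T_{\ysol}^J$. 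Cascading the two relations and using that $\vk = (\mADMM - I)\,\delta\zkm$ together with $[\mADMM,\mADMM - I]=0$, I read off
\[
\mADMM = I - Q - P'_J + 2 Q P'_J .
\]

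Now I assume $A$ is square and full rank. Setting $U \eqdef A(A^T A)^{-1/2}$ (orthogonal in this case) and $W \eqdef (A^T A)^{-1/2} \nabla^2 R(\xsol) (A^T A)^{-1/2} \succeq 0$, one gets $Q = \gamma U(W + \gamma I)^{-1} U^T$, which is self-adjoint with eigenvalues $\gamma/(\gamma + \lambda_i)$, $\lambda_i \in \mathrm{spec}(W)$. Hence $S \eqdef 2Q - I$ is self-adjoint with eigenvalues $(\gamma - \lambda_i)/(\gamma + \lambda_i)$, which are strictly positive precisely when $\gamma > \norm{W}$, giving $S \succ 0$ under the stated assumption. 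On the other side, $P'_J$ is manifestly self-adjoint positive semi-definite, since $H_J$ is self-adjoint positive definite on $T_{\ysol}^J$ by convexity of $J|_{\MmJy}$ and injectivity of $B$. Using $Q = (I+S)/2$ to rewrite $\mADMM = \tfrac12(I - S) + S P'_J$, and noting that $S$ commutes with $I - S$, conjugation by the well-defined $S^{1/2}$ gives
\[
S^{-1/2} \mADMM S^{1/2} = \tfrac12(I - S) + S^{1/2} P'_J S^{1/2} ,
\]
a sum of two self-adjoint operators. Therefore $\mADMM$ is similar to a self-adjoint operator and has only real eigenvalues, which is the claim. The main technical obstacle in the plan is the clean derivation of the explicit form of $\mADMM$ via the partly smooth $\yk$-step; once that is granted, the symmetrization step is a short matrix-algebra computation.
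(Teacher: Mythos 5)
Your proof is correct and follows essentially the same path as the paper: you obtain the same matrix $\mADMM = \Id - Q - P'_J + 2QP'_J = \tfrac12\Id + \tfrac12(2M_R-\Id)(2M_{\barJ}-\Id)$, and the symmetrization via conjugation by $S^{1/2}$ (with $S = 2Q - \Id = W_R$) is exactly the paper's conjugation by $W_R^{1/2}$, yielding the same similar self-adjoint matrix $\tfrac12(\Id - W_R) + W_R^{1/2}M_{\barJ}W_R^{1/2}$. The only cosmetic difference is that you invoke Theorem~\ref{prop:trajectory-admm}(i) directly by treating the $C^2$ function $R$ as partly smooth relative to the trivial manifold $\bbR^n$ (with the ND condition automatic since $\partial R(\xsol)$ is a singleton), whereas the paper re-runs the linearization of the $x$-step by hand; both produce the identical $\mADMM$.
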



\begin{remark}
When the spectrum of $M$ is real, numerical evidence shows that the eventual trajectory of $\seq{\zk}$ is a straight line, which is different from the case where both functions are non-smooth. 
If moreover $o(\norm{\vkm})$ vanishes fast enough, we can prove that $\theta_k \to 0$. 
%
%

It should be emphasized that the trajectory of $\seq{\zk}$ is determined by the property of the \emph{leading eigenvalue} of $\mADMM$. Therefore, for $\gamma \leq \norm{(A^T A)^{-\frac{1}{2}} {\nabla^2 R(\xsol)} (A^T A)^{-\frac{1}{2}}}$, though $\mADMM$ will have complex eigenvalues, the leading one is not necessarily to be complex. As a result, the trajectory of $\seq{\zk}$ could be either spiral (complex leading eigenvalue) or straight line (real leading eigenvalue). 
%
\end{remark}


In Figure \ref{fig:trajectory-admm}, we present two examples of the trajectory of ADMM. Subfigure (a) shows a spiral trajectory in $\bbR^2$ which is obtained from solving a polyhedral feasibility problem; See Section \ref{subsec:feas}. For this case, both $R$ and $J$ are indicator functions of affine subspaces. 
For subfigure (b), we use ADMM to solve a toy LASSO problem in $\bbR^3$ with $\gamma$ chosen such that the eventual trajectory of $\seq{\zk}$ is straight line.


\begin{figure}[!ht]
\centering
\subfloat[Spiral trajectory]{ \includegraphics[width=0.31\linewidth]{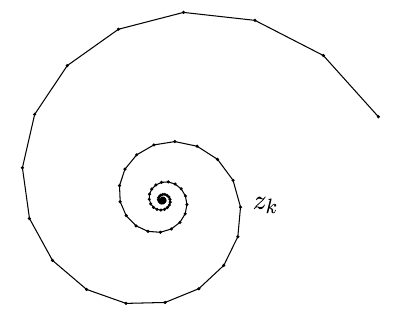} }
{\hspace{2pt}}
\subfloat[Eventual straight line trajectory]{ \includegraphics[width=0.3\linewidth]{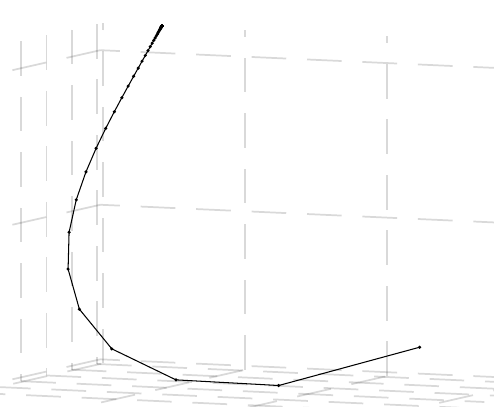} }
{\hspace{2pt}}
\subfloat[Trajectory of shadow sequences]{ \includegraphics[width=0.31\linewidth]{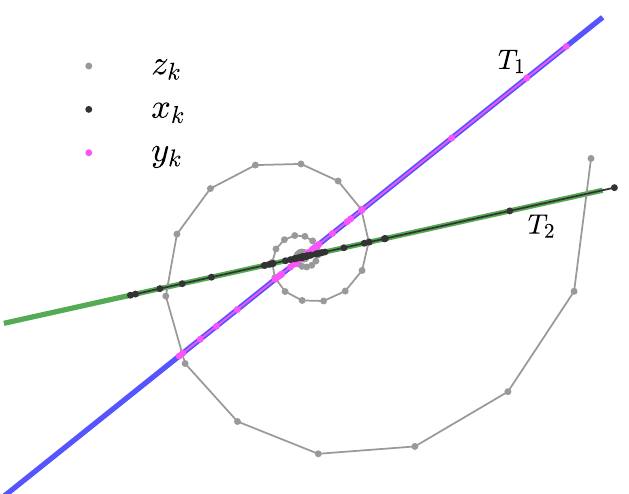} }    \\
\caption{Two different trajectories of ADMM and trajectory of shadow sequences.}
\label{fig:trajectory-admm}
\end{figure}



\begin{remark}[Trajectory of shadow sequences $\xk, \yk$]
We conclude this section by remarking the trajectories of the shadows sequences $\xk, \yk$, which are more complicated than that of $\zk$. 
The trajectories of $\xk, \yk$ depend on both the trajectory of $\zk$ and the properties of the functions $R, J$. For example, for the feasibility problem of Section \ref{subsec:feas}, the trajectory of $\zk$ is a spiral, see Figure \ref{fig:trajectory-admm} (a) and (c). The trajectory of $\yk$ is the projection of the trajectory of $\zk$ onto subspace $T_{1}$, which means that $\yk$ is swinging around $\ysol$ along $T_{1}$. Similar trajectory for $\xk$; see Figure \ref{fig:trajectory-admm} (c).  
The trajectories of $\xk$ and/or $\yk$ can also be spirals when $R$ and/or $J$ are smooth and the trajectory of $\zk$ is spiral. 
When the trajectory of $\zk$ is a straight-line, so are the trajectories of $\xk, \yk$. 
\end{remark}

\section{The failure of inertial acceleration}
\label{sec:fail-inertial}


In this section, we use the LASSO and feasibility problems as examples to demonstrate the effects of applying inertial technique to ADMM, especially when it fails. 
One simple approach to combine inertial technique and ADMM is via the equivalence between ADMM and Douglas--Rachford splitting method. Applying the inertial scheme of \cite[Chapter 4]{liang2016thesis} to the Douglas--Rachford iteration \eqref{eq:fp-dr}, we obtain the following inertial scheme 
\[
\begin{aligned}
\zbark &= \zk + \ak (\zk - \zkm)   ,   \\
\zkp &= \fDR(\zbark) .
\end{aligned}
\]
The above scheme can be reformulated as an instance of inertial Proximal Point Algorithm, guaranteed to be convergent for $\ak < \frac{1}{3}$ \cite{alvarez2001inertial}; We refer to \cite{pejcic2016accelerated} or \cite[Chapter 4.3]{liang2016thesis} for more details. 
Adapting the above scheme to ADMM we obtain the following inertial ADMM (iADMM) 
\beq\label{eq:iadmm}
\begin{aligned}
\xk &= \argmin_{x\in\bbR^n }~ R(x) + \sfrac{\gamma}{2} \norm{Ax - \tfrac{1}{\gamma} \pa{ \zbarkm - 2\psikm } }^2 , \\
\zk &= \psikm + \gamma A\xk   ,   \\
\zbark &= \zk + \ak (\zk - \zkm)   ,   \\
\yk &= \argmin_{y\in\bbR^m }~ J(y) + \sfrac{\gamma}{2} \norm{By + \tfrac{1}{\gamma} \pa{ \zbark - \gamma b } }^2 , \\
\psik &= \zbark + \gamma\pa{B\yk - b}  ,
\end{aligned}
\eeq
which considers only the momentum of $\seq{\zk}$ without any stronger assumptions on $R,J$. 
To our knowledge, there is no acceleration guarantee for \eqref{eq:iadmm}.

\begin{remark} $~$
\begin{itemize}
\item
In the inertial scheme \eqref{eq:iadmm}, we can also consider momentum of more than two points, that is using more points than $\zk,\zkm$ to update $\zbark$. For example, the following three-point momentum can be considered
\[
\zbark = \zk + \ak (\zk - \zkm) + \bk (\zkm - \zkmm)   . 
\]
We shall use the feasibility problem to demonstrate the benefits of the above approach. 

\item
In literature, besides \eqref{eq:iadmm}, other combinations of inertial technique and ADMM are also proposed, see for instance \cite{pejcic2016accelerated,kadkhodaie2015accelerated}. To ensure acceleration guarantees, stronger assumption needs to be imposed, such as Lipschitz smoothness and strong convexity. 
\end{itemize}
\end{remark}

\begin{figure}[!ht]
\centering
\subfloat[$\gamma = \frac{\norm{K}^{2}}{10}: \cos(\theta_k)$]{ \includegraphics[width=0.325\linewidth]{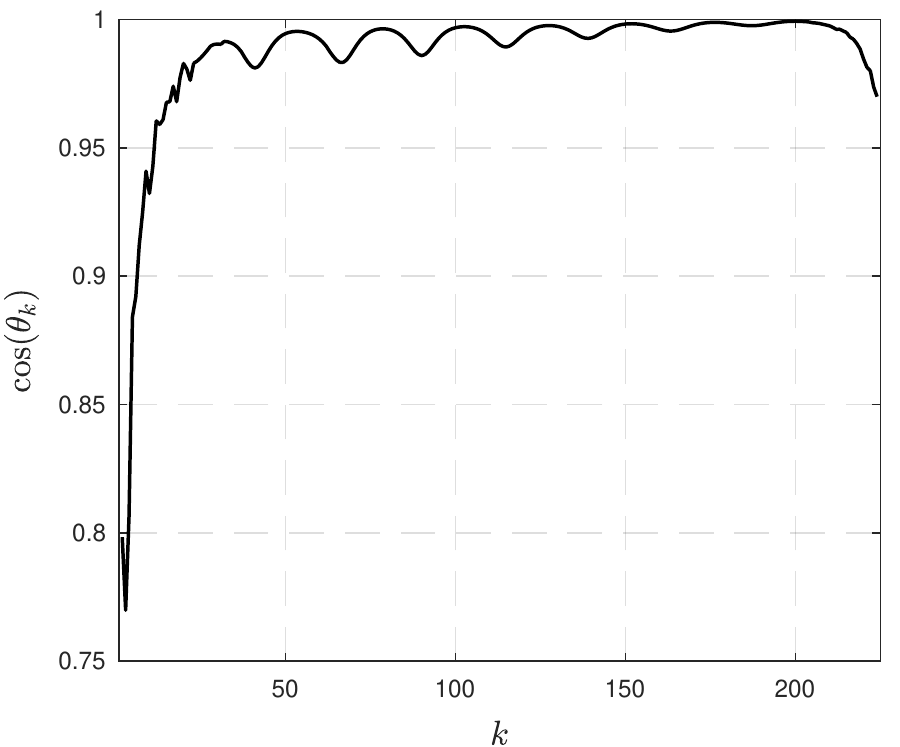} }   \hspace{8pt}
\subfloat[$\gamma = \frac{\norm{K}^{2}}{10}: \norm{\zk-\zsol}$]{ \includegraphics[width=0.325\linewidth]{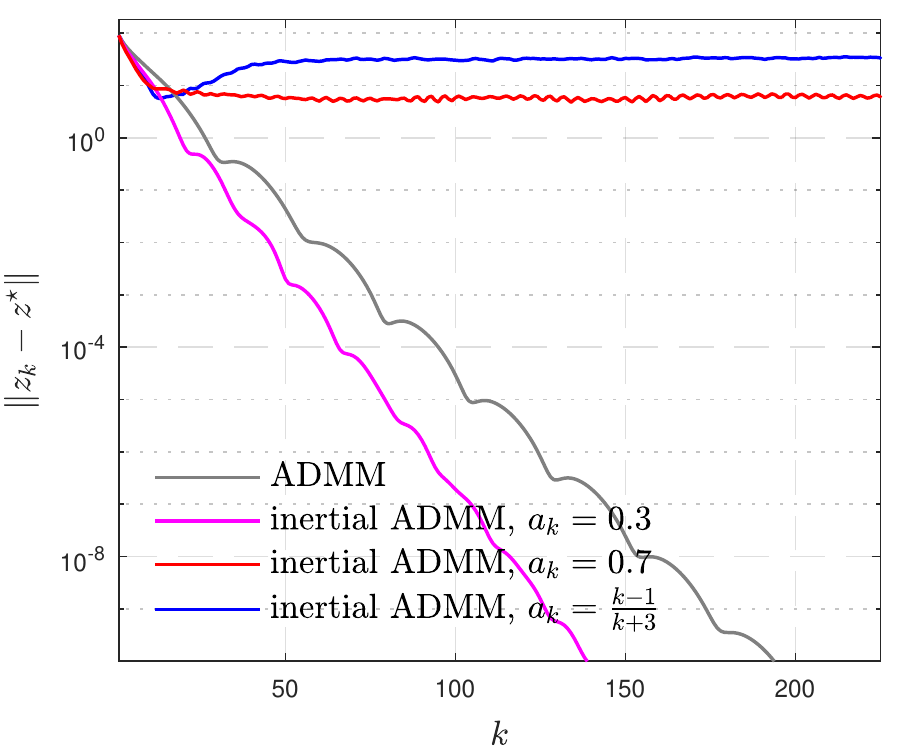} }    \\
\subfloat[$\gamma = \norm{K}^2+0.1: \cos(\theta_k)$]{ \includegraphics[width=0.325\linewidth]{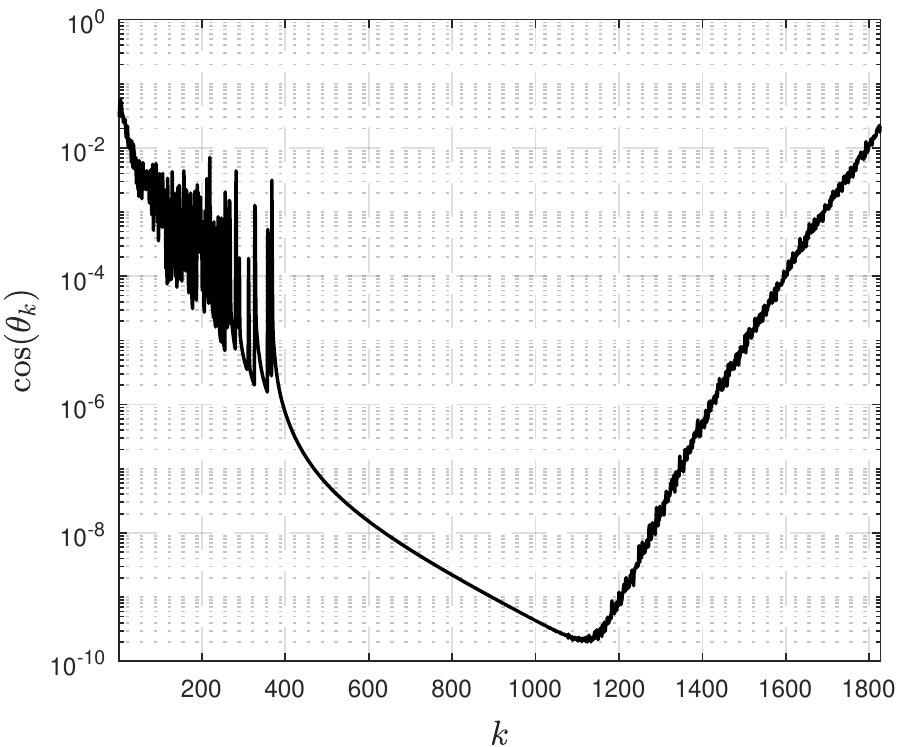} }   \hspace{8pt}
\subfloat[$\gamma = \norm{K}^2+0.1: \norm{\zk-\zsol}$]{ \includegraphics[width=0.325\linewidth]{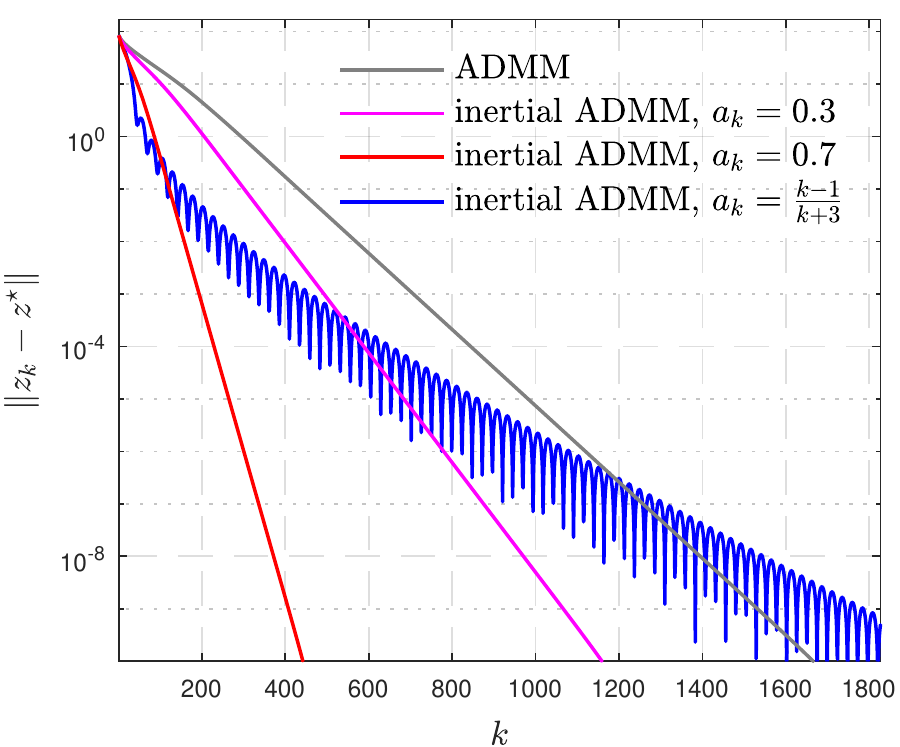} }    \\
\caption{Trajectory of sequence $\seq{\zk}$ and performance of inertial on ADMM. (a) $\cos(\theta_k)$ for $\gamma=\frac{\norm{K}^{2}}{10}$; (b) failure of inertial ADMM on spiral trajectory; (c) $\cos(\theta_k)$ for $\gamma=\norm{K}^2+0.1$; (d) success of inertial ADMM on straight line trajectory.}
\label{fig:tra-iadmm}
\end{figure}

\subsection{LASSO problem}
The formulation of LASSO in the form of \eqref{eq:problem-admm} reads
\beq\label{eq:lasso}
\begin{aligned}
\min_{x, y \in \bbR^n} ~ \mu \norm{x}_1 + \sfrac{1}{2} \norm{Ky - f}^2 \quad\textrm{such~that}\quad x - y = 0  ,
\end{aligned} 
\eeq
where $K \in \bbR^{m\times n} ,~ m < n$ is a random Gaussian matrix. 
%
Since $\frac{1}{2} \norm{K y - f}^2$ is quadratic, owing to Proposition \ref{prop:trajectory-admm-real}, the eventual trajectory of $\seq{\zk}$ is a straight line if $\gamma > \norm{K}^2$, and a spiral for some $\gamma \leq \norm{K}^2$. 
Therefore, we consider two different choices of $\gamma$ which are $\gamma=\frac{\norm{K}^{2}}{10}$ and $\gamma = \norm{K}^2 + 0.1$, and for each $\gamma$, four different choices of $\ak$ are considered
\[
\ak \equiv 0.3 ,\quad
\ak \equiv 0.7 \qandq
\ak = \tfrac{k-1}{k+3}   .
\]
The 3rd choice of $\ak$ corresponds to FISTA \cite{chambolle2015convergence}. 
For the numerical example, we consider $K \in \bbR^{640\times 2048}$ and $\mu = 1$, $f$ is the measurement of an $128$-sparse signal. The results are shown in Figure \ref{fig:tra-iadmm}, 
\begin{itemize}[leftmargin=2em]
	\item Case $\gamma=\frac{\norm{K}^{2}}{10}$: the value of $\cos(\theta_k)$ is plotted in Figure \ref{fig:tra-iadmm}(a), from which we can observed that $\theta_k$ eventually is changing in an interval which implies that the leading eigenvalue of $\mADMM$ is complex, and the trajectory of $\zk$ is a spiral.
		The inertial scheme works only for $\ak\equiv 0.3$, which is due to that fact that the trajectory of $\seq{\zk}$ is a spiral for $\gamma=\frac{\norm{K}^{2}}{10}$. As a result, the direction $\zk-\zkm$ is not pointing towards $\zsol$, hence unable to provide satisfactory acceleration. 
	\item Case $\gamma=\norm{K}^2+0.1$: For this case, the leading eigenvalue of $\mADMM$ is real, hence the trajectory of $\zk$ is straight line. From $k=1000$, the increasing of $\cos(\theta_k)$ is due to machine errors. 
	All choices of $\ak$ work since $\seq{\zk}$ eventually forms a straight line. 
	Among these four choices of $\ak$, $\ak\equiv 0.7$ is the fastest, while $\ak=\frac{k-1}{k+3}$ eventually is the slowest. 
\end{itemize}
It should be noted that, though ADMM is faster for $\gamma=\frac{\norm{K}^{2}}{10}$ than $\gamma = \norm{K}^2+0.1$, our main focus here is to demonstrate how the trajectory of $\seq{\zk}$ affects the outcome of inertial acceleration. 


The above comparisons, particularly for $\gamma=\frac{\norm{K}^{2}}{10}$ imply that the trajectory of the sequence $\seq{\zk}$ is crucial for the acceleration outcome of the inertial scheme. 
Since the trajectories of ADMM depends on the properties of $R, J$ and choice of $\gamma$, this implies that the right scheme that can achieve uniform acceleration despite $R,J$ and $\gamma$ should be able to adapt itself to the trajectory of the method. 

\subsection{Feasibility problem}\label{subsec:feas}

For the LASSO problem, though $\ak = 0.7, \frac{k-1}{k+2}$ fail to provide acceleration for $\gamma = \frac{\norm{K}^{2}}{10}$, $\ak = 0.3$ is marginally faster than the standard ADMM. Below we consider a feasibility problem to demonstrate that \eqref{eq:iadmm} will fail to provide acceleration as long as $\ak > 0$.

\begin{figure}[!ht]
\centering
\subfloat[$\norm{\zk-\zsol}$]{ \includegraphics[width=0.35\linewidth]{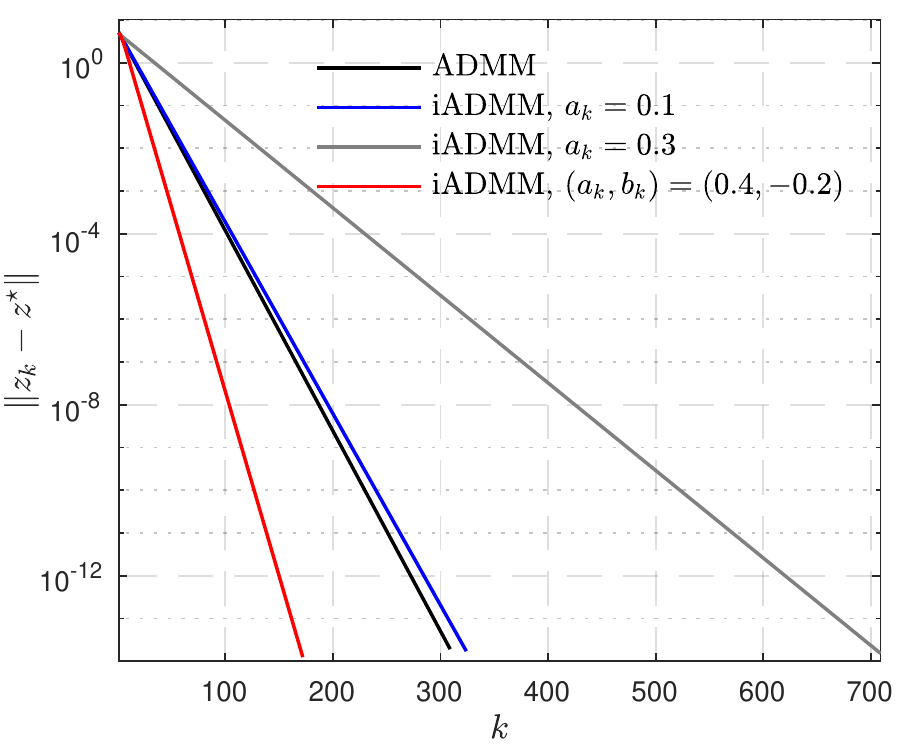} }
{\hspace{2pt}}
\subfloat[Trajectory of $\zk$]{ \includegraphics[width=0.37\linewidth]{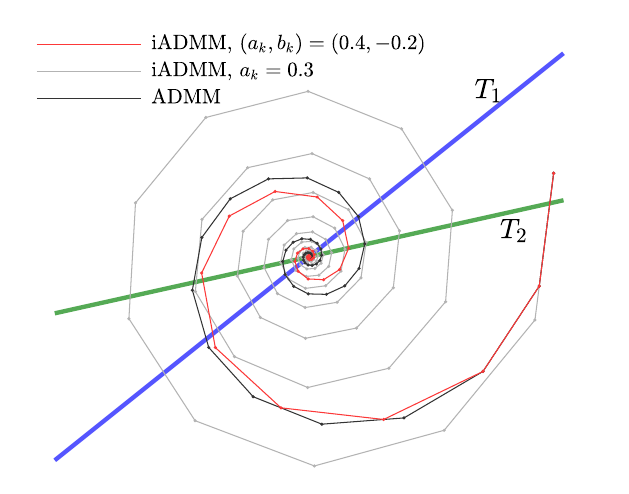} }    \\
\caption{Performance of different schemes and trajectory of $\zk$ for ADMM applying to feasibility problem.}
\label{fig:fail-iadmm-feasibility}
\end{figure}

Let $T_1, T_2$ be two subspaces of $\bbR^2$ such that $T_1 \cap T_2 = \ba{0}$, and consider the following problem
\beq\label{eq:feasibility}
\textrm{find} ~~ x \in \bbR^2 ~\enskip \textrm{such that} \enskip x \in \T_1 \cap T_2  ,
\eeq
which is finding the common point of $T_1$ and $T_2$. The problem can be written as
\[
\min_{x \in \bbR^n , y\in \bbR^m } ~ R(x) + J(y)  \quad
\textrm{such~that} \quad x - y = 0  ,
\]
where $R(x), J(y)$ are the indicator functions of $T_1, T_2$, respectively. Specify \eqref{eq:iadmm} to this case we obtain the following iteration
\[
\begin{aligned}
\xkp&= \proj_{T_1} \Pa{ \pa{ \zbark - 2\psik } / \gamma }  , \\
\zkp &= \psik + \gamma \xkp   ,   \\
\zbarkp &= \zkp + \ak (\zkp - \zk)   ,   \\
\ykp &= \proj_{T_2} \pa{ \zbarkp / \gamma } , \\
\psikp &= \zbarkp - \gamma \ykp  .
\end{aligned}
\]
To demonstrate the failure of inertial ADMM, besides the standard ADMM, the following schemes are considered
\[
\begin{aligned}
\textrm{iADMM}&: \zbark = \zk + 0.1(\zk-\zkm)  , \\
\textrm{iADMM}&: \zbark = \zk + 0.3(\zk-\zkm)  , \\
\textrm{iADMM}&: \zbark = \zk + 0.4(\zk-\zkm) - 0.2(\zkm - \zkmm) . 
\end{aligned}
\]
The performance of the four schemes in terms of $\norm{\zk-\zsol}$ is provided in Figure \ref{fig:fail-iadmm-feasibility} (a). It can be observed that for the inertial schemes with $\ak = 0.1, 0.3$, they are both slower than the standard ADMM, while the last scheme is faster than all the others. 
Such a difference is due to the fact that the trajectory of $\seq{\zk}$ of ADMM is a spiral, see Figure \ref{fig:fail-iadmm-feasibility} (b) the black dot line. As a result, the direction $\zk - \zkm$ is not pointing towards $\zsol$, making the inertial step useless. For the last inertial scheme, since the coefficient of $\zkm - \zkmm$ is negative, the direction $0.4(\zk-\zkm) - 0.2(\zkm - \zkmm)$ points towards $\zsol$, hence providing acceleration.

In Section \ref{sec:failure-inertial}, substantial discussions on why inertial ADMM fails, when the trajectory of $\zk$ is spiral, are provided. Especially for the case when both functions in \eqref{eq:problem-admm} are (locally) polyhedral around the solution, \eg the feasibility problem discussed above.

\section{A$\!^3$DMM: adaptive acceleration for ADMM}
\label{sec:ada-admm}

The previous section shows the trajectory of $\seq{\zk}$  eventually settles onto a regular path \ie either straight line or spiral. In this section, we exploit this regularity to design adaptive acceleration for ADMM, which is called ``A$\!^3$DMM''; See Algorithm \ref{alg:a3dmm}.

The update of $\zbark$ in \eqref{eq:iadmm} can be viewed as a special case of the following extrapolation 
\beq\label{eq:zbarkp-E}
\zbark = \calE(\zk, \zkm, \dotsm, z_{k-q}), 
\eeq
for the choice of $q=1$. The idea is: given $\{z_{k-j}\}_{j=0}^{q+1}$, define $v_j\eqdef z_j - z_{j-1}$ and predict the future iterates by considering how the past directions $v_{k-1},\ldots, v_{k-q}$ approximate the latest direction $\vk$. 
In particular, define  $V_{k-1} \eqdef \begin{bmatrix} v_{k-1} , \dotsm , v_{k-q}  \end{bmatrix} \in \bbR^{n \times q}$, and let $c_k\eqdef \argmin_{c\in \RR^q}\norm{V_{k-1} c - v_k}^2 = \norm{ \sum_{j=1}^q c_j v_{k-j} - v_k}^2$. The idea is then that $V_k c_k \approx v_{k+1}$ and so, $\bar z_{k,1} \eqdef z_k + V_k c \approx z_{k+1}$. By iterating this $s$ times, we obtain $\bar z_{k,s}\approx z_{k+s}$.


More precisely, given $c \in \bbR^{q}$, define the mapping $H$ by
$
H(c) 
= 
\left[
\begin{array}{c|c}
c_{1:q-1}  & \Id_{q-1} \\
c_{q} & 0_{1,q-1}
\end{array}
\right]
\in \bbR^{q \times q}  
$.  
Let $C_k = H(c_k)$, note that $V_{k} = V_{k-1} C_k$. Define $\bar V_{k,0} \eqdef V_k$ and for $s \geq 1$,  define
\[
\bar{V}_{k,s} \eqdef \bar{V}_{k,s-1} C_k \eqdef V_{k} C_k^s ,
\]
where $C_k^s$ is the power of $C_k$. 
Let $(C)_{(:, 1)}$ be the first column of matrix $C$, then
\beq\label{eq:zk_s}
\begin{aligned}
\zbar_{k,s} 
= \zk + \msum_{i=1}^{s} (\bar{V}_{k,i})_{(:, 1)} 
= \zk + \msum_{i=1}^{s} V_{k} (C_{k}^{i})_{(:, 1)}  
&= \zk + V_{k} \bPa{ \msum_{i=1}^{s} C_{k}^i }_{(:, 1)}  ,
\end{aligned}
\eeq
which is the desired trajectory following extrapolation scheme. Define the extrapolation parameterized by $s, q$ as 
\[
\calE_{s, q}(\zk, \dotsm, z_{k-q-1}) 
\eqdef  V_{k} \bPa{ \msum_{i=1}^{s} C_{k}^i }_{(:, 1)}  ,
\]
we obtain the following trajectory following adaptive acceleration for ADMM.

\begin{center}
\begin{minipage}{0.995\linewidth}
\begin{algorithm}[H]
\caption{A$\!^3$DMM - Adaptive Acceleration for ADMM} \label{alg:a3dmm}
{\noindent{\bf{Initial}}}: Let $s\geq1, q \geq 1$ be integers and $\bar{q} = q + 1$. Let $\zbar_0 = z_0 \in \bbR^p$ and $V_{0} = 0 \in \bbR^{p \times q}$. \\ 
{\noindent{\bf{Repeat}}}: 
\begin{itemize}[leftmargin=2em]
\item For $k \geq 1$:  \vspace{-11pt}
\beq\label{eq:a3dmm}
\begin{aligned}
\yk &= \argmin_{y\in\bbR^m }~ J(y) + \sfrac{\gamma}{2} \norm{By + \tfrac{1}{\gamma} \pa{ \zbarkm - \gamma b } }^2 , \\
\psik &= \zbarkm + \gamma\pa{B\yk - b}  , \\
\xk &= \argmin_{x\in\bbR^n }~ R(x) + \sfrac{\gamma}{2} \norm{Ax - \tfrac{1}{\gamma} \pa{ \zbarkm - 2\psik } }^2 , \\
\zk &= \psik + \gamma A\xk   ,   \\
\vk &= \zk - \zkm \qandq V_{k} = [\vk, V_{k-1}(:, 1:q-1)]  .
\end{aligned}
\eeq

\item \textrm{If $\textrm{mod}(k, \bar{q})=0$: } \textrm{Compute $C_k$ as described above, if $\rho(C_k)<1$:}  
			$$\zbark = \zk + \ak \calE_{s, q}(\zk, \dotsm, z_{k-q-1}).$$
\item \textrm{If $\textrm{mod}(k, \bar{q})\neq0$: } $ \zbark = \zk $. 
\end{itemize}
{\noindent{\bf{Until}}}: $\norm{\vk} \leq \tol$.
\end{algorithm}
\end{minipage}
\end{center}



\begin{remark} $~$
\begin{itemize}[leftmargin=2em, topsep=-4pt, itemsep=2pt]
\item When $\textrm{mod}(k, \bar{q})\neq0$, one can also consider $\zbark = \zk + \ak(\zk-\zkm)$ with properly chosen $\ak$. In stead of $\bar{q} = q + 1$, one can also consider $\bar{q} = q + i$ with $i\in\bbN_{+}$.  



\item A$\!^3$DMM carries out $\bar{q}$ standard ADMM iterations to set up the extrapolation step $\calE_{s, q}$. As $\calE_{s, q}$ contains the sum of the powers of $C_k$, it is guaranteed to be convergent when $\rho(C_k) < 1$. Therefore, we only apply $\calE_{s, q}$ when the spectral radius $\rho(C_k) < 1$ is true. In this case, there is a closed form expression for $\calE_{s, q}$ when $s=+\infty$; See Eq. \eqref{eq:lp-mpe}. 

\item The purpose of adding $\ak$ in front of $\calE_{s, q}(\zk, \dotsm, z_{k-q-1})$ is so that we can control the value of $\ak$ to ensure the convergence of the algorithm; See below the discussion. 
\end{itemize}
\end{remark}

In Algorithm \ref{alg:a3dmm}, we change the order of updates so that the extrapolation step only needs to be carried out on $\zk$. This is due to the fact, the update of $\yk$ only depends on $\zk$, and such an arrangement requires the minimal computational overhead. 
Under such setting, the extra memory cost is $pq$ for the storage of $V_{k}$. The extra computational cost of A$^{3}$DMM is very small, which is about $n q^{2}$ for computing the pseudo-inverse of $V_{k}$. 
Moreover, the value of $q$ usually is taken very small, \eg $q \leq 10$, therefore the total overheads of A$\!^3$DMM is rather limited.


\subsection{Convergence of A$\!^3$DMM}

To discuss the convergence of A$\!^3$DMM, we shall treat the algorithm as a perturbation of the original ADMM. 
If the perturbation error is absolutely summable, then we obtain the convergence of A$\!^3$DMM. 
More precisely, let $\varepsilon_k \in \bbR^n$ whose value takes
\[
\varepsilon_k =
\left\{
\begin{aligned}
0 &: \mathrm{mod}(k, \bar{q}) \neq 0  \enskip\textrm{or}\enskip  \mathrm{mod}(k, \bar{q}) = 0 ~~\&~~ \rho(C_k)\geq1, \\
\ak\calE_{s, q}(\zk, \dotsm, z_{k-q-1})  &: \mathrm{mod}(k, \bar{q}) = 0  \enskip\&\enskip \rho(C_k) < 1  .
\end{aligned}
\right. 
\] 
Suppose the fixed-point formulation of ADMM can be written as $\zk = \calF (\zkm)$ for some $\calF$ (see Section \ref{P-sec:tra-admm} of the appendix for details). 
Then Algorithm \ref{alg:a3dmm} can be written as
\beq\label{eq:perturbation-km}
\zk = \calF(\zkm + \varepsilon_{k-1}) .
\eeq
Owing to \eqref{eq:perturbation-km}, we can obtain the following convergence for Algorithm \ref{alg:a3dmm} which is based on the classic convergence result of inexact \KM fixed-point iteration \cite[Proposition 5.34]{bauschke2011convex}.

\begin{proposition}\label{prop:convergence-lp}
For problem \eqref{eq:problem-admm} and Algorithm \ref{alg:a3dmm}, suppose that the conditions \iref{ADMM:RJ}-\iref{ADMM:minimizers} are true.
If moreover, $\sum_{k} \norm{\varepsilon_k} < \pinf$, $\zk \to \zsol \in \fix(\calF) \eqdef \ba{z \in \bbR^p: z = \calF(z)}$ and $(\xk,\yk,\psik)$ converges to $(\xsol, \ysol, \psisol)$ which is a saddle point of $\calL(x, y; \psi)$. 
\end{proposition}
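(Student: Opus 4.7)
The plan is to treat Algorithm \ref{alg:a3dmm} as an inexact Krasnosel'skiĭ--Mann (KM) iteration for the Douglas--Rachford fixed-point operator on the dual problem \eqref{eq:problem-admm-dual}, and to invoke the classical inexact KM convergence theorem \cite[Proposition 5.34]{bauschke2011convex}.

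First, I would recall from the discussion preceding \eqref{eq:fp-dr} that ADMM applied to \eqref{eq:problem-admm} is equivalent to Douglas--Rachford splitting applied to the dual \eqref{eq:problem-admm-dual}, and that the associated fixed-point operator $\calF = \fDR$ is $\tfrac{1}{2}$-averaged (equivalently, firmly nonexpansive), so its fixed-point set $\fix(\calF)$ is nonempty by assumption \iref{ADMM:minimizers} and each $\bar z \in \fix(\calF)$ yields a saddle point $(\xsol,\ysol,\psisol)$ of $\calL$ through the standard primal recovery formulas. The rewriting \eqref{eq:perturbation-km} then places A$\!^3$DMM exactly in the template of an inexact KM iteration
\[
\zk = \calF(\zkm + \varepsilon_{k-1}) = \pa{ (1-\tfrac{1}{2})\Id + \tfrac{1}{2} \calR_{J^*} \calR_{R^*} }(\zkm + \varepsilon_{k-1}),
\]
where $\calR_{R^*},\calR_{J^*}$ denote the reflected resolvents of the dual subdifferentials.

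Second, I would verify the summability hypothesis of \cite[Proposition 5.34]{bauschke2011convex}. By construction $\varepsilon_k = 0$ whenever $\mathrm{mod}(k,\bar q) \neq 0$ or $\rho(C_k) \geq 1$, and the nonzero values equal $\ak \calE_{s,q}(\zk,\ldots,z_{k-q-1})$. Assuming $\sum_k \norm{\varepsilon_k} < \pinf$ as in the statement, the inexact KM theorem yields (i) the sequence $\seq{\zk}$ is Fejér-monotone (up to summable error) with respect to $\fix(\calF)$, and (ii) $\zk \to \zsol$ for some $\zsol \in \fix(\calF)$.

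Third, I would transfer convergence from $\seq{\zk}$ to the shadow sequences $\seq{\xk}, \seq{\yk}, \seq{\psik}$. From \eqref{eq:a3dmm}, $\xk$ and $\yk$ are given by proximal updates whose arguments are continuous functions of $\zbarkm$ (and hence of $\zkm$, $\zkmm$, \ldots, $z_{k-q-1}$ via $\calE_{s,q}$). Since $\varepsilon_{k-1}\to 0$ (as a consequence of summability) and $\zk\to\zsol$, we have $\zbarkm\to\zsol$. Injectivity of $A$ and $B$ (assumption \iref{ADMM:AB}) together with continuity of the proximity operators of $R$ and $J$ then force $\xk \to \xsol$, $\yk \to \ysol$ and $\psik \to \psisol$, with $(\xsol,\ysol,\psisol)$ being the saddle point encoded by $\zsol$ through the KKT/primal-recovery identities.

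The main obstacle is a bookkeeping one: checking that \eqref{eq:perturbation-km} is really an instance of the exact template used in \cite[Proposition 5.34]{bauschke2011convex}, namely that the perturbation enters \emph{inside} the averaged operator in an admissible way, and that $\calF$ is averaged with constant strictly less than one. Once the equivalence to Douglas--Rachford is invoked this is standard, but care is needed because the extrapolation in A$\!^3$DMM perturbs the input of $\calF$ rather than its output, so one may need to rewrite the recursion as $\zk = \calF(\zkm) + e_{k-1}$ with $e_{k-1} \eqdef \calF(\zkm + \varepsilon_{k-1}) - \calF(\zkm)$ and use nonexpansiveness of $\calF$ to bound $\norm{e_{k-1}} \leq \norm{\varepsilon_{k-1}}$, so that summability of $\seq{\varepsilon_k}$ implies summability of $\seq{e_k}$ and the standard inexact KM result applies directly.
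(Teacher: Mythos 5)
Your proposal is correct and follows essentially the same route as the paper: rewrite the input perturbation as an output perturbation, $\zk = \calF(\zkm) + \pa{\calF(\zkm+\varepsilon_{k-1}) - \calF(\zkm)}$, use (firm) nonexpansiveness of $\fDR$ to bound the error term by $\norm{\varepsilon_{k-1}}$, conclude quasi-Fej\'er monotonicity of $\seq{\zk}$ with respect to $\fix(\calF)$, and invoke \cite[Proposition 5.34]{bauschke2011convex}. Your third step, recovering convergence of the shadow sequences $(\xk,\yk,\psik)$ from $\zk\to\zsol$ via continuity of the proximal updates and injectivity of $A,B$, is in fact spelled out in more detail than in the paper's own proof, which leaves that implication implicit.
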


\paragraph{On-line updating rule}
The summability condition $\sum_{k} \norm{\varepsilon_k} < \pinf$ in general cannot be guaranteed. However, it can be enforced by a simple online updating rule. Let $a \in [0, 1]$ and $ b, \delta > 0$, then $\ak$ can be determined by $\ak = \min\Ba{ a, {b}/\pa{ k^{1+\delta} \norm{ \zk-\zkm }  } }$. 

\paragraph{Inexact A$\!^3$DMM}
Observe that in A$\!^3$DMM, when $A, B$ are non-trivial, in general there are no closed form solutions for $\xk$ and $\yk$. 
Take $\xk$ for example, suppose it is computed approximately, then in $\zk$ there will be another approximation error $\varepsilon_k'$, and consequently
\[
\zk = \calF(\zkm + \varepsilon_{k-1} + \gamma \varepsilon_{k-1}') .
\]
If there holds $\sum_{k} \norm{\varepsilon_{k-1}'} < \pinf$, Proposition \ref{prop:convergence-lp} remains true for the above perturbation form.

\subsection{Acceleration guarantee for A$\!^3$DMM}\label{sec:acc-gua}

%
We have so far alluded to the idea that the extrapolated point $\bar z_{k,s}$ defined in \eqref{eq:zk_s} (which depends only on $\{z_{k-j}\}_{j=0}^q$) is an approximation to $z_{k+s}$. In this section, we make precise this statement. 

\paragraph{Relationship to MPE and RRE}
We first show that $\bar z_{k,\infty}$ is (almost) equivalent to MPE. 
Recall that given a square matrix $C$, if its Neumann series is convergent, then there holds $(\Id-C)^{-1} = \sum_{i=0}^{\pinf} C^i$.  
For the summation of the power of $C_{k}$ in \eqref{eq:zk_s}, when $s=\pinf$, we have
\[
\msum_{i=1}^{\pinf} C_{k}^i 
= C_{k} \msum_{i=0}^{\pinf} C_{k}^i 
= C_{k} (\Id - C_{k})^{-1}
= (\Id - C_{k})^{-1} - \Id .
\]
Back to \eqref{eq:zk_s}, then we get
\beq\label{eq:lp-mpe}
\begin{aligned}
\zbar_{k,\infty} 
&\eqdef \zk + V_{k} \Pa{ (\Id - C_{k})^{-1} - \Id }_{(:, 1)}  
= \zk - \vk + V_{k} \Pa{ (\Id - C_{k})^{-1} }_{(:, 1)}   \\
&= \zkm + V_{k} \Pa{ (\Id - C_{k})^{-1} }_{(:, 1)}   
= \sfrac{1}{1- \sum_{i=1}^s c_{k,i}} \Pa{ z_k - \msum_{j=1}^{q-1} c_{k,j} z_{k-j}} ,
\end{aligned}
\eeq
which turns out to be MPE, with the slight difference of taking the weighted sum of $\ba{z_j}_{j=k-q+1}^k$ as opposed to the weighted sum of $\ba{z_j}_{j=k-q}^{k-1}$ {(See appendix for more details of MPE)}.  
Note that if the coefficients $c$ is computed in the following way:
 $b\in\argmin_{a\in\RR^{q+1}, \sum_j a_j =1}\norm{ \sum_{j=0}^{q} a_j v_{k-j} }$ and $b_0 \neq 0$ and define $c_j\eqdef - b_j/b_0$ for $j=1,\ldots, q$. Then,
$$
\Pa{ 1-\msum_{i=1}^{q} c_i }^{-1} =\sfrac{b_0}{ b_0 + \sum_{j=1}^{q} b_j} = b_0,
$$
and $\zbar_{k,\infty} = \sum_{j=0}^{q-1} b_j z_{k-j}$ is precisely the RRE update (again with the slight difference of summing over iterates shifted by one iteration).

%
%
%

\paragraph{Acceleration guarantee for A$\!^3$DMM}
%
Let $\seq{\zk}$ be a sequence in $\bbR^n$ and let $\vk \eqdef \zk-\zkm$. Assume that 
$v_k = M v_{k-1}  $ for some $M \in \bbR^{n \times n}$. Denote $\lambda(M)$ the spectrum of $M$.   
The following proposition provides control on the extrapolation error for $\bar z_{k,s}$ from \eqref{eq:zk_s}.

\begin{proposition}\label{prop:extrap-error}
Define the coefficient fitting error by $\epsilon_k \eqdef \min_{c\in \RR^q} \norm{V_{k-1} c - v_{k}}$.
\begin{enumerate}[label={\rm (\roman{*})}, leftmargin=2em]
\item
For $s\in \NN$,
we have
\begin{equation}\label{eq:err-finite}
\norm{\zbar_{k,s} - \zsol} \leq \norm{z_{k+s} - \zsol} + B_s \epsilon_k ,  
\end{equation}
where
$B_s \eqdef \sum_{\ell=1}^s \norm{M^\ell} \abs{ \sum_{i=0}^{s-\ell} (C_k^i)_{(1,1)}}$. If $\rho(M)<1$ and $\rho(C_k)<1$, then  $\sum_i c_{k,i}\neq 1$ and $B_s$ is uniformly bounded in $s$. For $s=+\infty$, $B_\infty\eqdef \abs{1-\sum_i c_{k,i}}^{-1} \sum_{\ell=1}^\infty \norm{M}^\ell $
\item
Suppose that $M$ is diagonalizable. Let $(\lambda_j)_j$ denote its distinct eigenvalues ordered such that $\abs{\lambda_j}\geq \abs{\lambda_{j+1}}$ and $\abs{\lambda_1}= \rho(M)<1$. Suppose that $\abs{\lambda_{q}}>\abs{\lambda_{q+1}}$. 
\begin{itemize}[leftmargin=2.25em]
\item
Asymptotic bound (fixed $q$ and as $k\to +\infty$): $\epsilon_k =\Oo\pa{\abs{\lambda_{q+1}}^k}$. 
\item
Non-asymptotic bound (fixed $q$ and $k$):
Suppose that $\lambda(M)$ is real-valued and contained in the interval  $[\alpha,\beta]$ with $-1<\alpha<\beta<1$.
Then,
\begin{equation}\label{eq:opt-err}
\qfrac{\epsilon_k}{1-\sum_i c_{k,i}} \leq K \beta^{k-q} \Pa{\tfrac{\sqrt{\eta}-1}{\sqrt{\eta}+1}}^q
\end{equation}
where
  $K\eqdef {2   \norm{z_0-\zsol}}\norm{(\Id-M)^{\frac12}}$ and $\eta = \frac{1-\alpha}{1-\beta}$. 
\end{itemize}
\end{enumerate}
\end{proposition}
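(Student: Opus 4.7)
My plan rests on one algebraic identity that fuses the dynamics $v_j = M v_{j-1}$ with the extrapolation matrix $C_k$. Concatenating the recurrence column-wise gives $V_k = M V_{k-1}$, while the companion-type structure of $C_k = H(c_k)$ directly gives $V_k = V_{k-1} C_k + r_k e_1^\top$ with $r_k \eqdef v_k - V_{k-1} c_k$ the MPE residual (so $\norm{r_k} = \epsilon_k$). Equating produces the master relation $M V_{k-1} = V_{k-1} C_k + r_k e_1^\top$, which lets me commute $C_k$ past $V_k$ at the cost of a rank-one error. For part (i), a short induction on $\ell$ gives
\[
V_k (C_k^\ell)_{(:,1)} = M^\ell v_k - \sum_{j=0}^{\ell-1} (C_k^j)_{(1,1)}\, M^{\ell-j} r_k,
\]
and summing $\ell = 1, \dots, s$ while swapping the order of summation yields $\zbar_{k,s} - z_{k+s} = -\sum_{\ell=1}^s M^\ell r_k \sum_{j=0}^{s-\ell} (C_k^j)_{(1,1)}$; the triangle inequality gives \eqref{eq:err-finite} with the advertised $B_s$. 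When $\rho(M),\rho(C_k)<1$, the inner sum converges to $((\Id-C_k)^{-1})_{(1,1)}$ by Neumann series and $\norm{M^\ell}$ decays geometrically, so $B_s$ is uniformly bounded. The closed form for $B_\infty$ comes from a cofactor expansion: the characteristic polynomial of the companion-like matrix $C_k$ is $\lambda^q - \sum_i c_{k,i}\lambda^{q-i}$, so $\det(\Id-C_k) = 1 - \sum_i c_{k,i}$ (nonzero since $\rho(C_k)<1$), and the $(1,1)$-cofactor is an upper bidiagonal block with determinant $1$, giving $((\Id-C_k)^{-1})_{(1,1)} = 1/(1-\sum_i c_{k,i})$.

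For part (ii), I set $u = v_{k-q}$ so that $v_{k-j} = M^{q-j} u$, and the fit becomes a Krylov-type polynomial approximation
\[
\epsilon_k = \min_{\tilde Q \text{ monic, }\deg\tilde Q = q} \norm{\tilde Q(M)u}, \qquad \tilde Q(1) = 1 - \msum_i c_{k,i}.
\]
For the asymptotic rate, I plug in the test polynomial $\tilde Q^\star(\lambda) = \prod_{j=1}^q (\lambda - \lambda_j)$, which annihilates the top $q$ eigencomponents of $u$ in the spectral expansion and leaves only contributions from $\abs{\lambda_i}\leq\abs{\lambda_{q+1}}$; MPE optimality then yields $\epsilon_k = O(\abs{\lambda_{q+1}}^k)$.

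For the non-asymptotic bound I rewrite $\epsilon_k/(1-\sum_i c_{k,i}) = \norm{\tilde Q_k(M)u}/\abs{\tilde Q_k(1)}$ and introduce the monic Chebyshev polynomial $\tilde T_q$ adapted to $[\alpha,\beta]$. The standard estimate $T_q(y) \geq \tfrac{1}{2}(y+\sqrt{y^2-1})^q$ for $y > 1$, together with the algebraic identity $y+\sqrt{y^2-1} = (\sqrt\eta+1)/(\sqrt\eta-1)$ at $y = (2-\alpha-\beta)/(\beta-\alpha)$, gives $\sup_{[\alpha,\beta]}\abs{\tilde T_q}/\abs{\tilde T_q(1)} \leq 2\bigl((\sqrt\eta-1)/(\sqrt\eta+1)\bigr)^q$. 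MPE optimality controls the numerator via $\norm{\tilde Q_k(M)u} \leq \norm{\tilde T_q(M)u} \leq \sup_{[\alpha,\beta]}\abs{\tilde T_q}\cdot\norm{u}$, while the recurrence combined with $v_1 = (\Id-M)(\zsol-z_0)$ gives $v_{k-q} = M^{k-q-1}(\Id-M)(\zsol-z_0)$, so that $\norm{u}$ is bounded by $\beta^{k-q-1}$ times a quantity controlled by $\norm{(\Id-M)^{1/2}}\cdot\norm{z_0-\zsol}$ via an energy-norm argument using that $(\Id-M)^{1/2}$ commutes with $M$. The main obstacle is the denominator $\abs{\tilde Q_k(1)}$: MPE extremizes a different functional than $\tilde T_q$ (minimum norm among monic polynomials versus the RRE-type minimum among polynomials with $Q(1)=1$), so the naive bound $\abs{\tilde Q_k(1)}\geq (1-\beta)^q$ from $\tilde Q_k$ having its roots in $[\alpha,\beta]$ is too weak by a factor polynomial in the Chebyshev base. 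Closing the gap requires exploiting the Krylov/Lanczos structure: the roots of $\tilde Q_k$ are the Ritz values of $M$ on $\mathrm{span}\{u, Mu, \dots, M^{q-1}u\}$, and a sharper orthogonal-polynomial comparison with $\tilde T_q$ is needed to recover the advertised $((\sqrt\eta-1)/(\sqrt\eta+1))^q$ constant.
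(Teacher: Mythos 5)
Parts (i) and the asymptotic bound in (ii) are correct and follow essentially the same route as the paper: your master relation $M V_{k-1} = V_{k-1}C_k - r_k e_1^\top$ is exactly the paper's $E_{k,0}$ decomposition in disguise, your induction on $\ell$ reproduces the paper's $E_\ell = \sum_{j=1}^\ell M^j E_0 C^{\ell-j}$ restricted to its first column, and the Fubini swap plus triangle inequality yields the same $B_s$. Your cofactor argument for $((\Id-C_k)^{-1})_{(1,1)} = (1-\sum_i c_{k,i})^{-1}$ is a welcome justification that the paper merely asserts. For the $\Oo(|\lambda_{q+1}|^k)$ estimate, plugging the annihilating polynomial $\prod_{j\le q}(\lambda - \lambda_j)$ into the MPE minimisation is again the paper's argument verbatim.

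The non-asymptotic Chebyshev bound is where your proposal stops short, and you have honestly flagged the failure point: decoupling the ratio $\epsilon_k/|\tilde Q_k(1)|$ into a numerator bound (via the monic Chebyshev polynomial) and a denominator bound (via $\tilde Q_k(1) \ge (1-\beta)^q$ from Ritz-value localisation) loses a $\bigl((\eta-1)/4\bigr)^q$-versus-$\bigl((\sqrt{\eta}-1)/(\sqrt{\eta}+1)\bigr)^q$ mismatch, which is not merely a polynomial slack but is exponentially wrong as $\eta$ grows. The paper avoids ever bounding $\tilde Q_k(1)$ at all. The resolution you were searching for is the following orthogonality trick: pass to the MPE update $y = \sum_{i=0}^q \gamma_i z_{k-i-1}$ with $\gamma_i = -c_{k,i}/(1-\sum c_{k,i})$, so that $\epsilon_k/(1-\sum_i c_{k,i}) = \norm{\sum_i \gamma_i v_{k-i}} = \norm{(\Id-M)(y-\zsol)}$ and the vector $\sum_i \gamma_i v_{k-i}$ is \emph{orthogonal} to $\mathrm{span}\{v_{k-1},\dots,v_{k-q}\}$ by the normal equations defining $c_k$. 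Writing $A=\Id-M$ and using that orthogonality to absorb any $w = f(M)(z_0-\zsol)$ with $f$ in the allowable span, one gets $\norm{A^{1/2}(y-\zsol)}^2 = \langle A(y-\zsol), (y-\zsol)+w\rangle \le \norm{A^{1/2}(y-\zsol)}\cdot\norm{A^{1/2}M^{k-q-1}h(M)(z_0-\zsol)}$ for \emph{any} degree-$q$ polynomial $h$ with $h(1)=1$. This converts the MPE (monic) variational problem into the RRE-type ($h(1)=1$) variational problem directly in the energy norm, so the Chebyshev comparison with $T_q\bigl((2z-\alpha-\beta)/(\beta-\alpha)\bigr)/T_q\bigl((2-\alpha-\beta)/(\beta-\alpha)\bigr)$ applies without ever touching $\tilde Q_k(1)$. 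That projection step in the $A^{1/2}$-inner product is the missing idea; without it, the bound you obtain is genuinely weaker than the claim.

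One smaller remark: both you and the paper implicitly use $\norm{\Sigma^{k-q}g(\Sigma)}$ as if the diagonalising change of basis were orthogonal; for a merely diagonalisable $M$ a condition-number factor of the eigenvector matrix should appear, but this affects only the leading constant, not the rate, and is shared by the paper's own write-up.
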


\begin{remark}$~$
\begin{itemize}[leftmargin=2em]
\item From Theorem \ref{prop:trajectory-admm}(ii), when $R$ and $J$ are both polyhedral, we have a perfect local linearisation with the corresponding linearisation matrix being normal and hence, the conditions of Proposition \ref{prop:extrap-error} holds for all $k$ large enough.
The first bound (i) shows that the extrapolated point $\zbar_{k,s}$ moves along the true trajectory as $s$ increases, up to the fitting error $\epsilon_k$. Although $\zbar_{k,\infty}$ is essentially an MPE update which is known to satisfy  error bound \eqref{eq:opt-err} (see \cite{sidi2017vector}), this proposition offers a further interpretation of these extrapolation methods in terms of following the ``sequence trajectory'', and combined with our local analysis of ADMM, provides justification of these methods for the acceleration of non-smooth optimisation problems.

\item Proposition \ref{prop:extrap-error} (ii) shows that extrapolation improves the convergence rate from $\Oo(\abs{\lambda_1}^k)$ to $\Oo(\abs{\lambda_{q+1}}^k)$, and the non-asymptotic bound shows that the improvement of extrapolation is optimal in the sense of Nesterov \cite{nesterov83}. Recalling the form of the eigenvalues of $M$ from Theorem \ref{prop:trajectory-admm}, in the case of two non-smooth polyhedral terms,
we must have $\abs{\lambda_{2j-1}} = \abs{\lambda_{2j}} > \abs{\lambda_{2j+1}}$ for all $j\geq 1$. Hence, no acceleration can be guaranteed or observed when $q=1$, while the choice of $q=2$ provides guaranteed acceleration. 
\end{itemize}

   \end{remark}

\section{Discussions}
\label{sec:discussion}

In this section, two variants of ADMM, including the relaxed ADMM and symmetric ADMM which updates $\psi$ twice every iteration, are discussed. 
An extension of A$^3$DMM Algorithm \ref{alg:a3dmm} to these variants is provided at the end of the section.

\subsection{Variants of ADMM}

\paragraph{Relaxed ADMM}
In the literature, a popular variant of ADMM is the \emph{relaxed ADMM} which takes the following iteration procedure:  
\beq\label{eq:admm_relax}
\begin{aligned}
\xk &= \argmin_{x\in\bbR^n }~ R(x) + \tfrac{\gamma}{2} \norm{Ax+B\ykm-b + \tfrac{1}{\gamma}\psikm }^2 , \\
\xbark &= \phi A\xk - (1-\phi)(B\ykm-b) , \\
\yk &= \argmin_{y\in\bbR^m }~ J(y) + \tfrac{\gamma}{2} \norm{\xbark+By-b + \tfrac{1}{\gamma}\psikm }^2 , \\
\psik &= \psikm + \gamma\pa{\xbark + B\yk - b}  ,
\end{aligned}
\eeq
where $\phi \in [0, 2]$ is the relaxation parameter. The above iteration is called \emph{over-relaxed} ADMM for $\phi \in ]1, 2]$. 

In its dual form, the relaxed ADMM is equivalent to the \emph{relaxed} Douglas--Rachford splitting applied to solve \eqref{eq:problem-admm-dual}, see Section \ref{subsec:admm-dr}. The convergence of \eqref{eq:admm_relax} can be guaranteed for $\phi \in ]0, 2[$ \cite{bauschke2011convex}.
Similar to \eqref{eq:admm2}, define $\zk \eqdef \psikm + \gamma \xbark$, we can rewrite the relaxed ADMM into the following form 
\beq\label{eq:admm_relax2}
\begin{aligned}
\xk &= \argmin_{x\in\bbR^n }~ R(x) + \sfrac{\gamma}{2} \norm{Ax - \tfrac{1}{\gamma} \pa{ \zkm - 2\psikm } }^2 , \\
\zk &= \psikm + \gamma \Pa{ \phi A\xk - (1-\phi)(B\ykm-b) }   ,   \\
\yk &= \argmin_{y\in\bbR^m }~ J(y) + \sfrac{\gamma}{2} \norm{By + \tfrac{1}{\gamma} \pa{ \zk - \gamma b } }^2 , \\
\psik &= \zk + \gamma\pa{B\yk - b}  .
\end{aligned}
\eeq
We can easily adapt the result of the previous sections to the relaxed ADMM via $\zk$.

\begin{remark}
Similar to inertial acceleration, the performance of relaxation also depends on the trajectory of the sequence of $\zk$. For example, when both $R, J$ are (locally) polyhedral around $\xsol, \ysol$, the (eventual) trajectory of $\zk$ is spiral, according to \cite{Bauschke14} the (eventual) optimal relaxation parameter $\phi$ is $1$, that is no relaxation provides the best performance. 
\end{remark}

\paragraph{Symmetric ADMM} 
As aforementioned, the ADMM iteration \eqref{eq:admm} is equivalent to applying Douglas--Rachford splitting to the dual problem \eqref{eq:problem-admm-dual} \cite{gabay1983chapter}. 
It is also pointed out in \cite{gabay1983chapter} that, if the Peaceman--Rachford splitting method \cite{peaceman1955numerical} is applied to solve \eqref{eq:problem-admm-dual}, then it leads to the following iteration in the primal form 
\beq\label{eq:admm_symmetric}
\begin{aligned}
\xk &= \argmin_{x\in\bbR^n }~ R(x) + \tfrac{\gamma}{2} \norm{Ax+B\ykm-b + \tfrac{1}{\gamma}\psikm }^2 , \\
\psikmh &= \psikm + \gamma\pa{A\xk + B\ykm - b} , \\ 
\yk &= \argmin_{y\in\bbR^m }~ J(y) + \tfrac{\gamma}{2} \norm{A\xk+By-b + \tfrac{1}{\gamma}\psikmh }^2 , \\
\psik &= \psikmh + \gamma \pa{A\xk + B\yk - b}  ,
\end{aligned}
\eeq
which is also called \emph{symmetric ADMM}. A brief derivation is provided in Section \ref{subsec:admm-pr}, and we refer to \cite{gabay1983chapter,he2014strictly} and the references therein for more detailed discussions.

In general, the conditions needed for the convergence of \eqref{eq:admm_symmetric} is stronger than the standard ADMM \eqref{eq:admm}, which is due to the fact that stronger conditions are needed to guarantee the convergence of Peaceman--Rachford splitting method \cite{gabay1983chapter}. However, when \eqref{eq:admm_symmetric} converges, it tends to provide faster performance than \eqref{eq:admm}. Similar to \eqref{eq:admm2}, if we define $\zk = \psik - \gamma B\yk + \gamma b = \psikmh + \gamma A\xk$, then \eqref{eq:admm_symmetric} is equivalent to 
\beq\label{eq:admm_symmetric2}
\begin{aligned}
\xk &= \argmin_{x\in\bbR^n }~ R(x) + \tfrac{\gamma}{2} \norm{Ax + \tfrac{1}{\gamma}(2\psikm - \zkm) }^2 , \\
\zk &= \psikm + \gamma\pa{2A\xk + B\ykm - b} , \\
\yk &= \argmin_{y\in\bbR^m }~ J(y) + \tfrac{\gamma}{2} \norm{By + \tfrac{1}{\gamma}(\zk - \gamma b) }^2 , \\
\psik &= \zk + \gamma \pa{B\yk - b}  ,
\end{aligned}
\eeq
which can be written as the fixed-point iteration in terms of $\zk$, see Section \ref{subsec:admm-pr}. 
Suppose the iteration is convergent, then following the analysis of Section \ref{sec:trajectory-admm}, we can obtain the trajectory property of symmetric ADMM in terms of the fixed-point sequence $\zk$. 

\subsection{An extension of A$^3$DMM}\label{subsec:implementation}

From the above discussions, we have that relaxed ADMM \eqref{eq:admm_relax2} and symmetric ADMM \eqref{eq:admm_symmetric2} only differ from the standard ADMM on the update of $\zk$. As a result of these similarities, we can easily extend the A$^{3}$DMM Algorithm \ref{alg:a3dmm} to these variants.

Let
\[
\zk = \calZ(\gamma, \phi; \xk, \ykm, \psikm) 
\]
represent the way of updating $\zk$ in \eqref{eq:admm2}, \eqref{eq:admm_relax2} and \eqref{eq:admm_symmetric2}. 
By replacing \eqref{eq:a3dmm} in Algorithm \ref{alg:a3dmm} with the following equations
\beq\label{eq:a3dmm_extension}
\begin{aligned}
\yk &= \argmin_{y\in\bbR^m }~ J(y) + \sfrac{\gamma}{2} \norm{By + \tfrac{1}{\gamma} \pa{ \zbarkm - \gamma b } }^2 , \\
\psik &= \zbarkm + \gamma\pa{B\yk - b}  , \\
\xk &= \argmin_{x\in\bbR^n }~ R(x) + \sfrac{\gamma}{2} \norm{Ax - \tfrac{1}{\gamma} \pa{ \zbarkm - 2\psik } }^2 , \\
\zk &= \calZ(\gamma, \phi; \xk, \ykm, \psikm)   ,   \\
\vk &= \zk - \zkm \qandq V_{k} = [\vk, V_{k-1}(:, 1:q-1)]  ,
\end{aligned}
\eeq
we obtain the extension of A$^3$DMM to the variants of ADMM.

%
%
%
%

\section{Numerical experiments}
\label{sec:experiment}

We present numerical experiments on affine constrained minimisation (\eg Basis Pursuit), LASSO, quadratic programming and image processing problems to demonstrate the performance of A$^3$DMM. 
In the numerical comparison below, we mainly compare with the original ADMM and its inertial version \eqref{eq:iadmm} with fixed $\ak\equiv0.3$. For the proposed A$\!^3$DMM, two settings are considered: $(q, s) = (6, 100)$ and $(q, s) = (6, +\infty)$. 
MATLAB source codes for reproducing the results can be found at: \texttt{https://github.com/jliang993/A3DMM}.

\subsection{Affine constrained minimisation}
Consider the following constrained problem
\beq\label{eq:bp}
\min_{x \in \bbR^{n} } ~ R(x)    \quad
\textrm{such~that} \quad Kx = f  .
\eeq
Denote the set $\Omega \eqdef \ba{ x \in \bbR^{n} : K x = f}$, and $\iota_{\Omega}$ its indicator function.  
Then \eqref{eq:bp} can be written as
\beq\label{eq:bp2}
\begin{aligned}
\min_{x, y \in \bbR^{n} } ~ R(x) + \iota_{\Omega}(y)  \quad
\textrm{such~that} \quad x - y = 0  ,
\end{aligned}
\eeq
which is special case of \eqref{eq:problem-admm} with $A=\Id, B=-\Id$ and $b=0$. 
Here $K$ is generated from the standard Gaussian ensemble, and the following three choices of $R$ are considered:
\begin{itemize}[leftmargin=2.5cm]
\item[{\bf $\ell_{1}$-norm}] $(m, n)=(512, 2048)$, solution $\xsol$ is $128$-sparse;
\item[{\bf $\ell_{1,2}$-norm}] $(m, n)=(512, 2048)$, solution $\xsol$ has $32$ non-zero blocks of size $4$;
\item[{\bf Nuclear norm}] $(m, n)=(1448, 4096)$, solution $\xsol$ has rank of $4$.
\end{itemize}
The property of $\seq{\theta_k}$ is shown in Figure \ref{fig:cmp-bp} (a)-(c). Note that the indicator function $\iota_{\Omega}(y)$ in \eqref{eq:bp2} is polyhedral since $\Omega$ is an affine subspace,
\begin{itemize}[leftmargin=2em]
\item As $\ell_{1}$-norm is polyhedral, we have in Figure \ref{fig:cmp-bp}(a) that $\theta_{k}$ is converging to a constant which complies with Theorem \ref{prop:trajectory-admm}(ii). 

\item Since $\ell_{1,2}$-norm and nuclear norm are no longer polyhedral functions, we have that $\theta_k$ eventually oscillates in a range, meaning that the trajectory of $\seq{\zk}$ is an elliptical spiral. 
\end{itemize}
Comparisons of the four schemes are shown below in Figure \ref{fig:cmp-bp} (d)-(f):
\begin{itemize}[leftmargin=2em]
\item Since both functions in \eqref{eq:bp2} are non-smooth, the eventual trajectory of $\seq{\zk}$ for ADMM is spiral. Inertial ADMM fails to provide acceleration locally.
\item A$\!^3$DMM is faster than both ADMM and inertial ADMM. For the two different settings of A$\!^3$DMM, their performances are very close. 
\end{itemize}


\begin{figure}[!ht]
	\centering
	\subfloat[$\ell_{1}$-norm]{ \includegraphics[width=0.3\linewidth]{./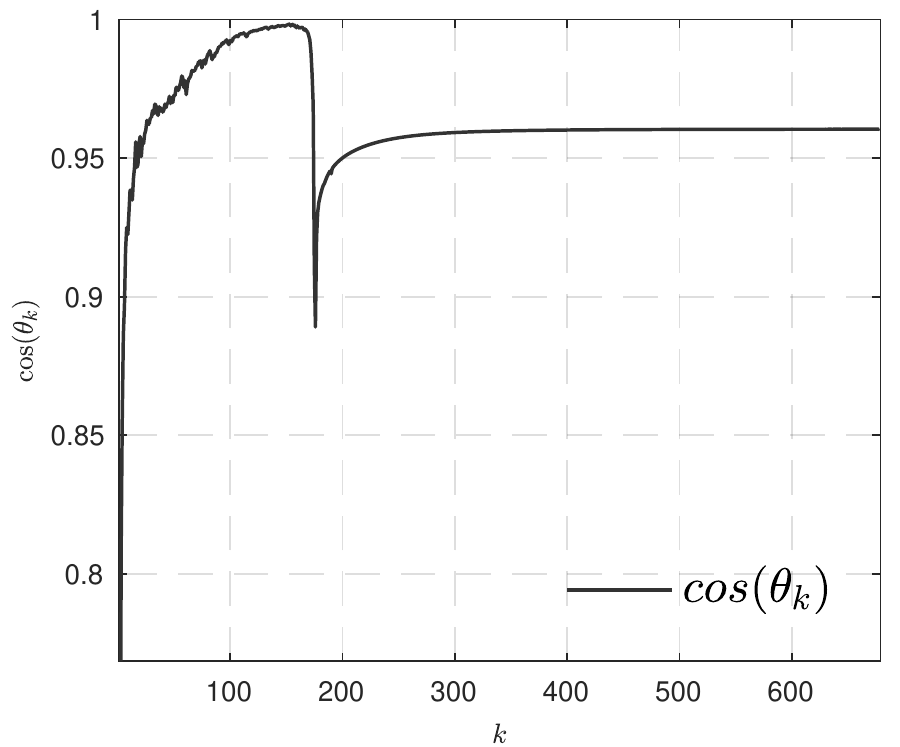} }  \hspace{2pt}
	\subfloat[$\ell_{1,2}$-norm]{ \includegraphics[width=0.3\linewidth]{./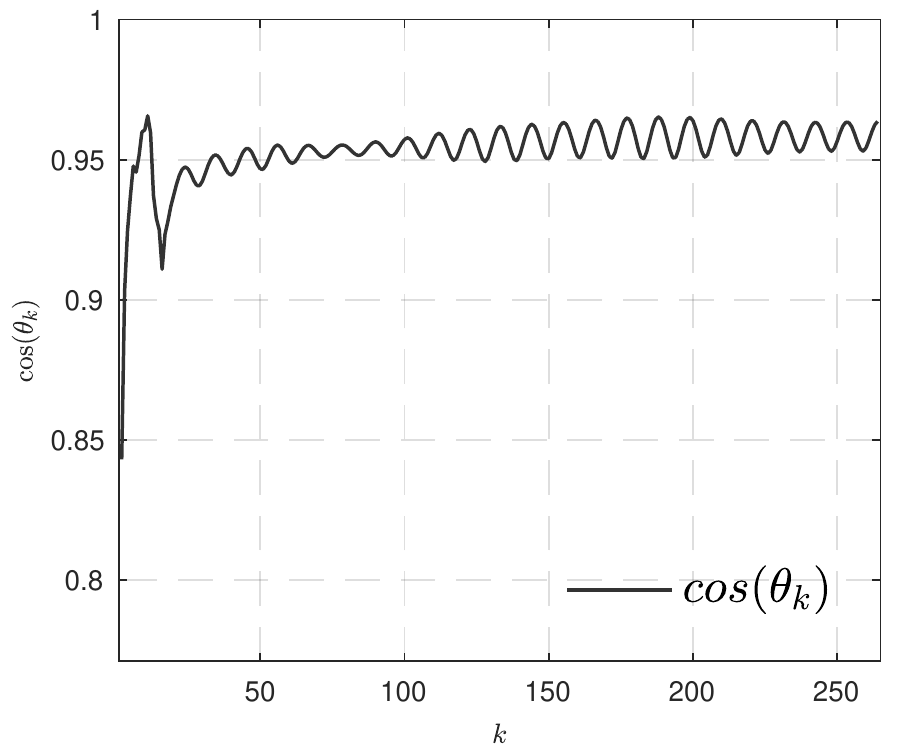} }  \hspace{2pt}
	\subfloat[Nuclear norm]{ \includegraphics[width=0.3\linewidth]{./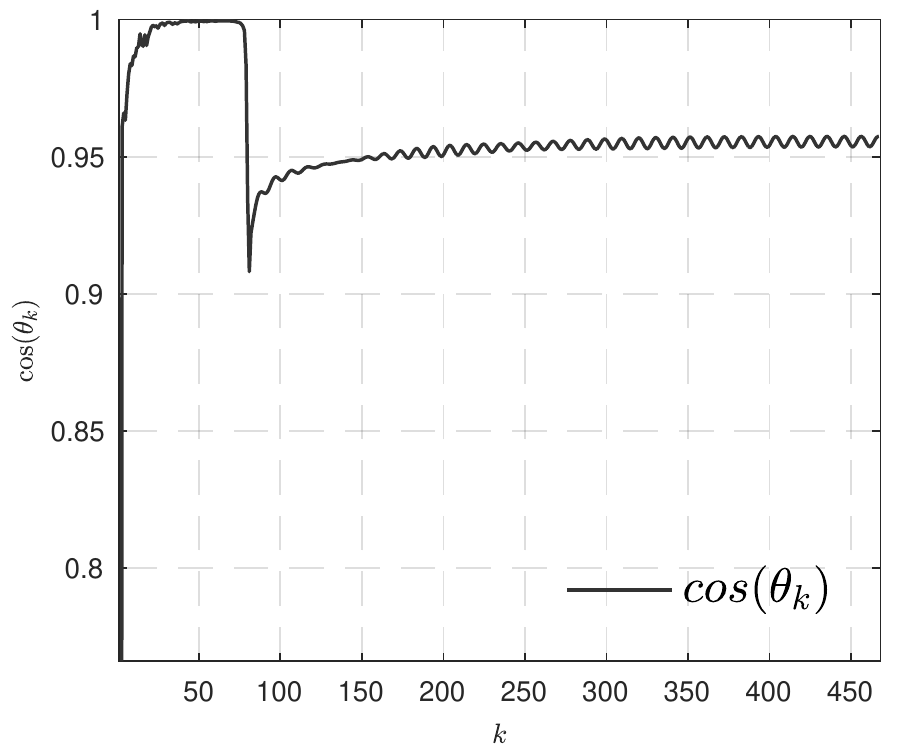} }    \\	
	\subfloat[$\ell_{1}$-norm]{ \includegraphics[width=0.3\linewidth]{./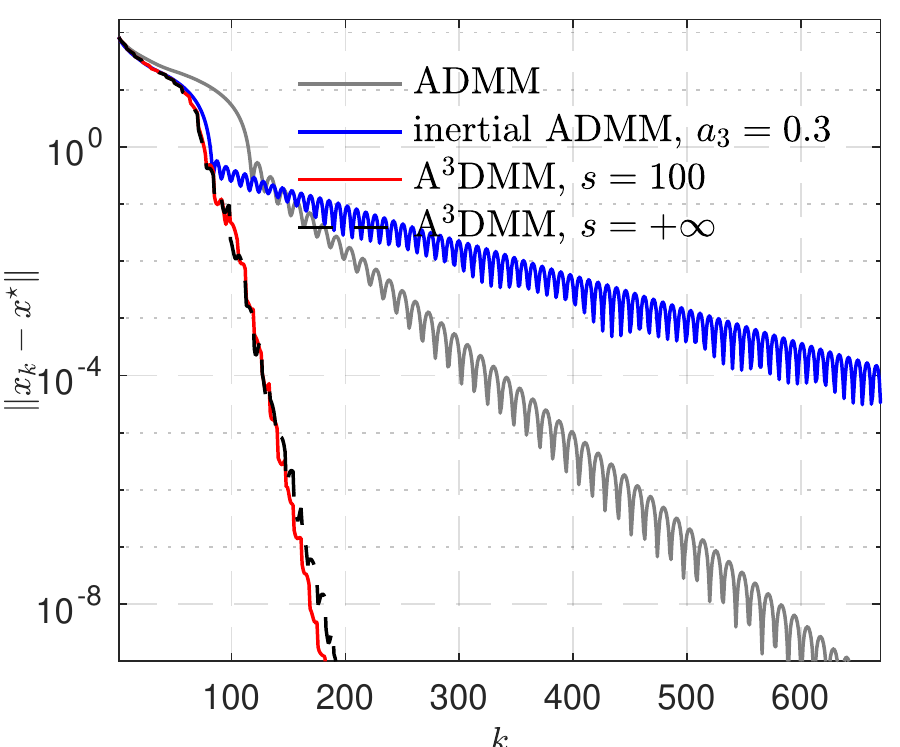} }  \hspace{2pt}
	\subfloat[$\ell_{1,2}$-norm]{ \includegraphics[width=0.3\linewidth]{./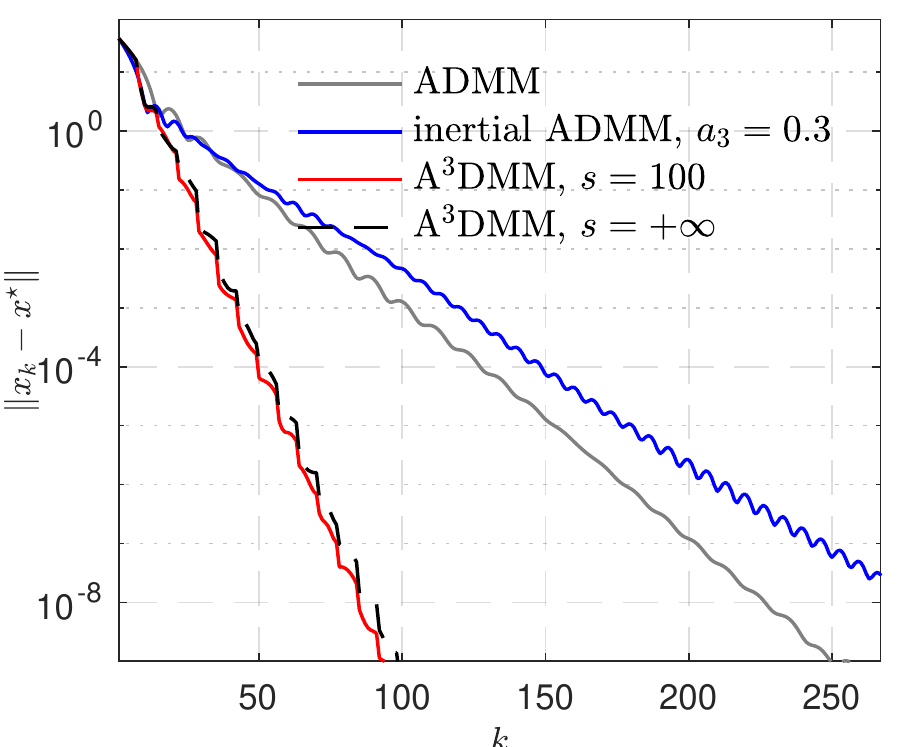} }  \hspace{2pt}
	\subfloat[Nuclear norm]{ \includegraphics[width=0.3\linewidth]{./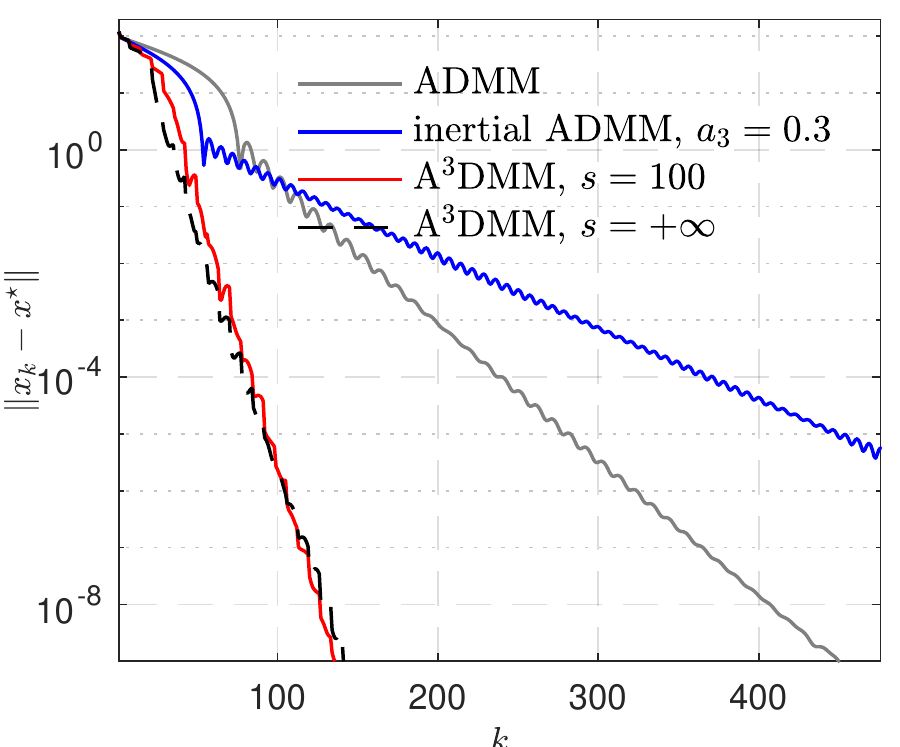} }    \\
	\caption{Performance comparisons and $\seq{\theta_k}$ of ADMM for affine constrained problem.} 
	\label{fig:cmp-bp}
\end{figure}

\subsection{LASSO} 
We consider again the LASSO problem \eqref{eq:lasso} with three datasets from LIBSVM\footnote{\url{https://www.csie.ntu.edu.tw/~cjlin/libsvmtools/datasets/}}. The numerical experiments are provided below in Figure \ref{fig:cmp-lasso}. 

%

It can be observed that the proposed A$\!^3$DMM is significantly faster than the other schemes, especially for $s = +\infty$. 
Between ADMM and inertial ADMM, different from the previous example, the inertial technique can provided consistent acceleration for all three examples.


\begin{figure}[!ht]
	\centering
	\subfloat[\texttt{covtype}: $1-\cos(\theta_k)$]{ \includegraphics[width=0.3\linewidth]{./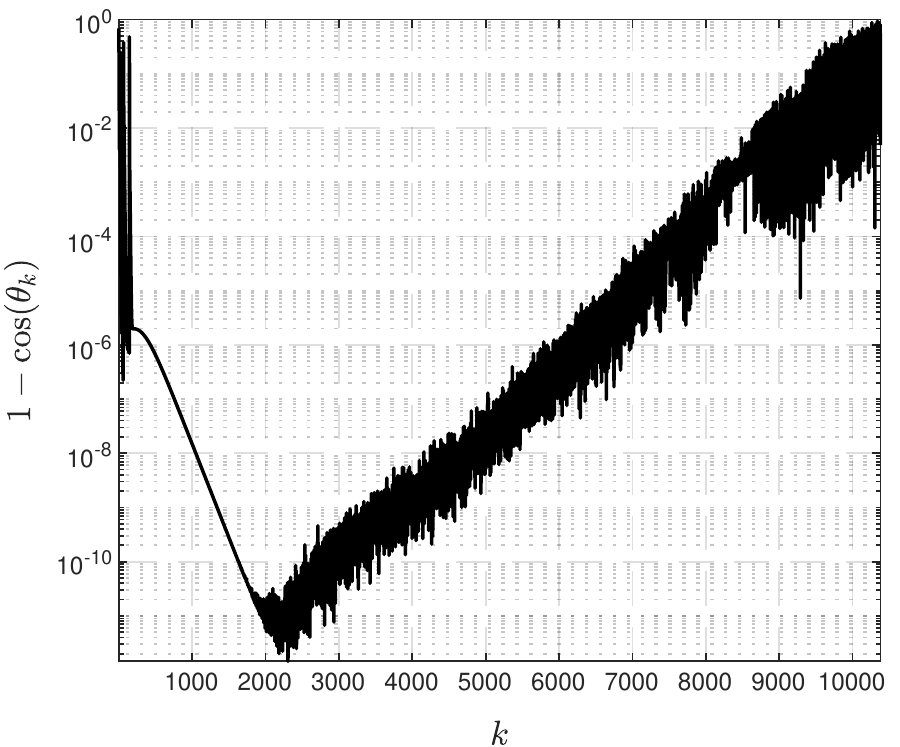} }   \hspace{2pt}
	\subfloat[\texttt{ijcnn1}: $1-\cos(\theta_k)$]{ \includegraphics[width=0.3\linewidth]{./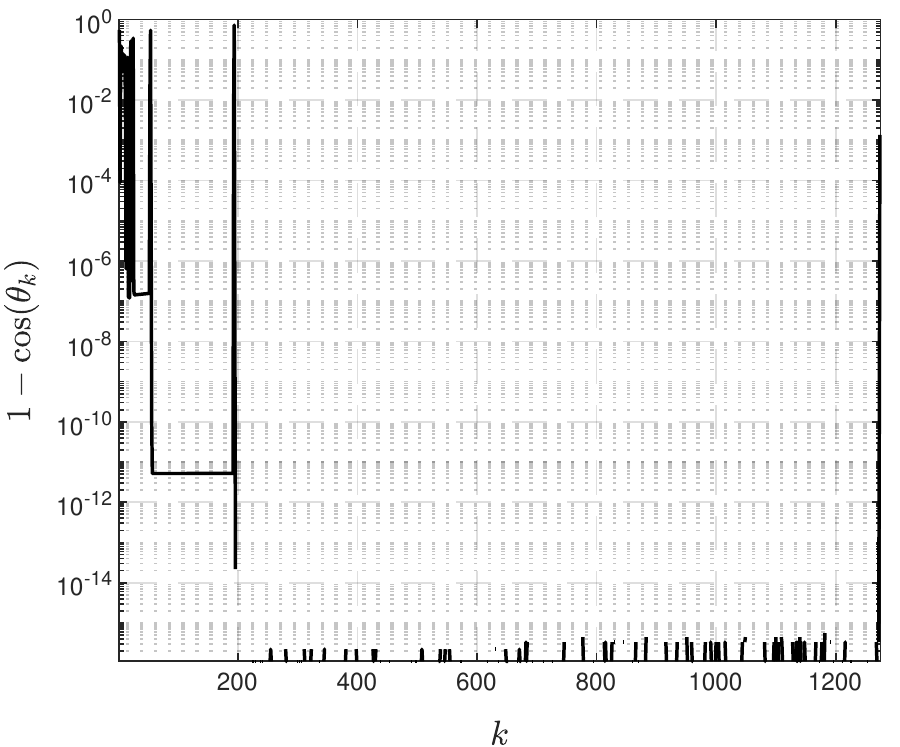} }   \hspace{2pt}
	\subfloat[\texttt{phishing}: $1-\cos(\theta_k)$]{ \includegraphics[width=0.3\linewidth]{./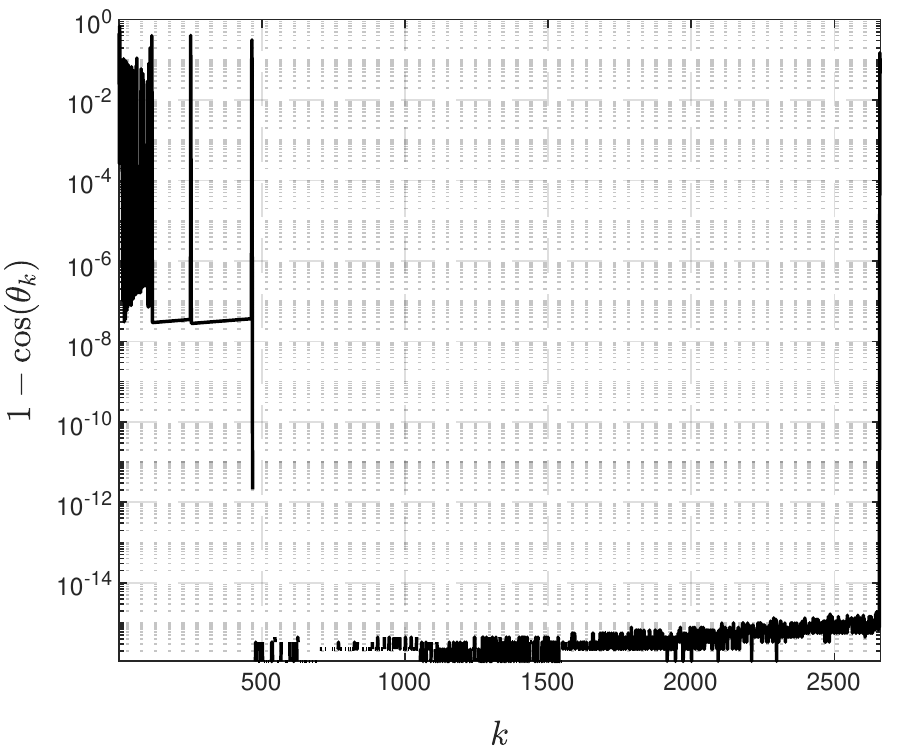} }  \\
\centering
	\subfloat[\texttt{covtype}: $\norm{\xk-\xsol}$]{ \includegraphics[width=0.3\linewidth]{./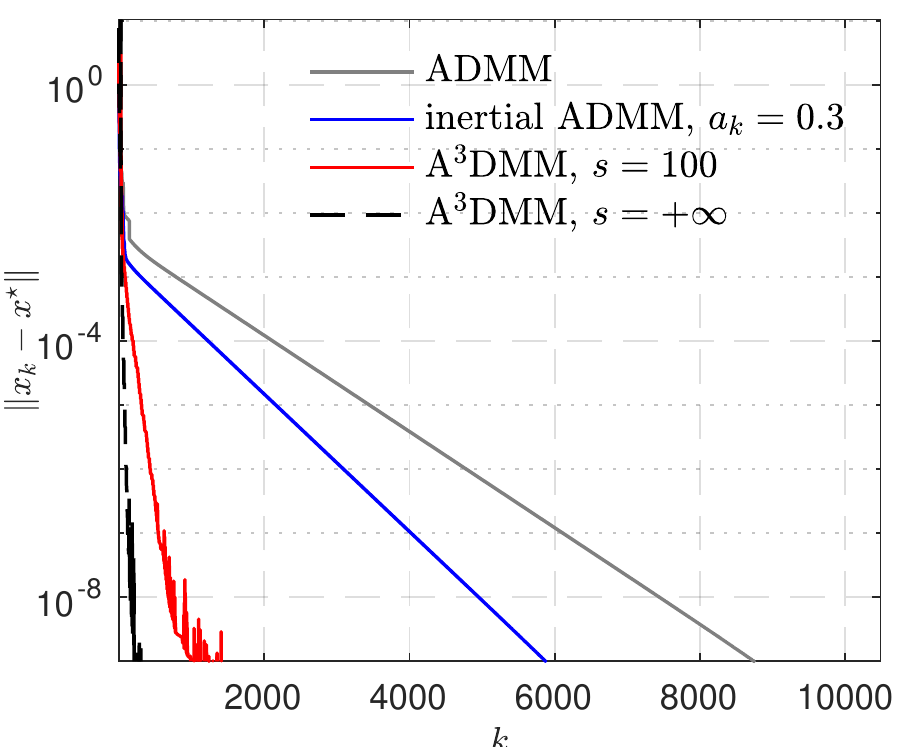} }   \hspace{2pt}
	\subfloat[\texttt{ijcnn1}: $\norm{\xk-\xsol}$]{ \includegraphics[width=0.3\linewidth]{./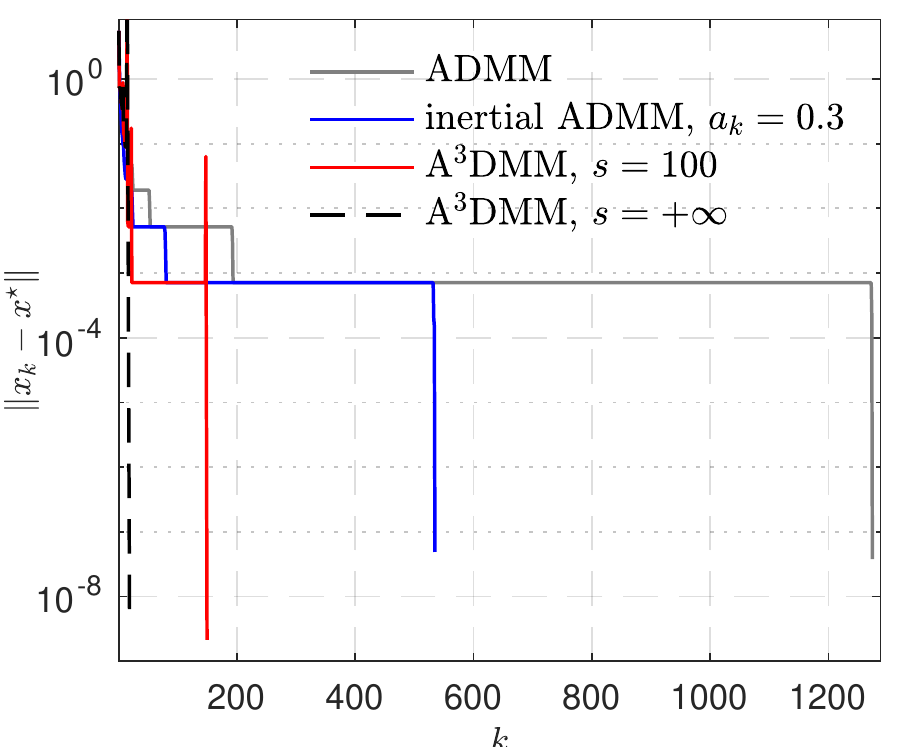} }   \hspace{2pt}
	\subfloat[\texttt{phishing}: $\norm{\xk-\xsol}$]{ \includegraphics[width=0.3\linewidth]{./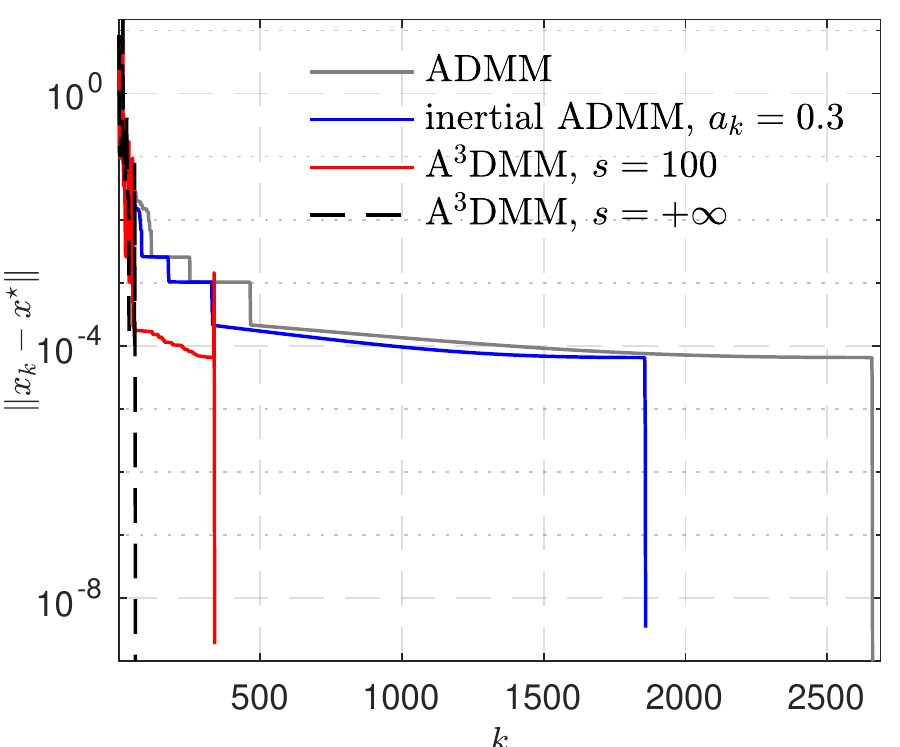} }  \\
	\caption{Performance comparisons for LASSO problem.}
	\label{fig:cmp-lasso}
\end{figure}

\subsection{Quadratic programming}

Consider the following quadratic optimisation problem
\beq\label{eq:qp}
\begin{aligned}
\min_{x \in \bbR^n} &\quad \sfrac{1}{2} x^T Q x + \iprod{q}{x}   ,  \\
\textrm{such~that} &\quad  x_{i} \in [\ell_{i}, r_{i}] ,~~~  i = 1, ..., n .
\end{aligned}
\eeq
Define the constraint set $\Omega = \ba{x \in \bbR^n: x_{i} \in [\ell_{i}, r_{i}] ,~~~  i = 1, ..., n}$, then \eqref{eq:qp} can be written as
\[
\begin{aligned}
\min_{x, y \in \bbR^n} \quad \sfrac{1}{2} x^T Q x + \iprod{q}{x} + \iota_{\Omega}(y)  \quad 
\textrm{such~that} \quad  x - y = 0 ,
\end{aligned}
\]
which is special case of \eqref{eq:problem-admm} with $A=\Id, B=-\Id$ and $b=0$.

The angle $\theta_k$ of ADMM and the performances of the four schemes are provided in Figure \eqref{fig:cmp-qp}, from which we observed that
\begin{itemize}[leftmargin=2em]
\item The angle $\theta_{k}$ is decreasing to $0$ at the beginning and then starts to increasing for $k \geq 2\times 10^4$. This is mainly due to the fact that for $k \geq 2\times 10^4$, the effects of machine error is becoming increasingly larger. 

\item Consistent with the previous observations, the proposed A$\!^3$DMM schemes provides the best performance. 
\end{itemize}


%
%

\begin{figure}[!ht]
	\centering
	\subfloat[Angle $\theta_k$]{ \includegraphics[width=0.325\linewidth]{./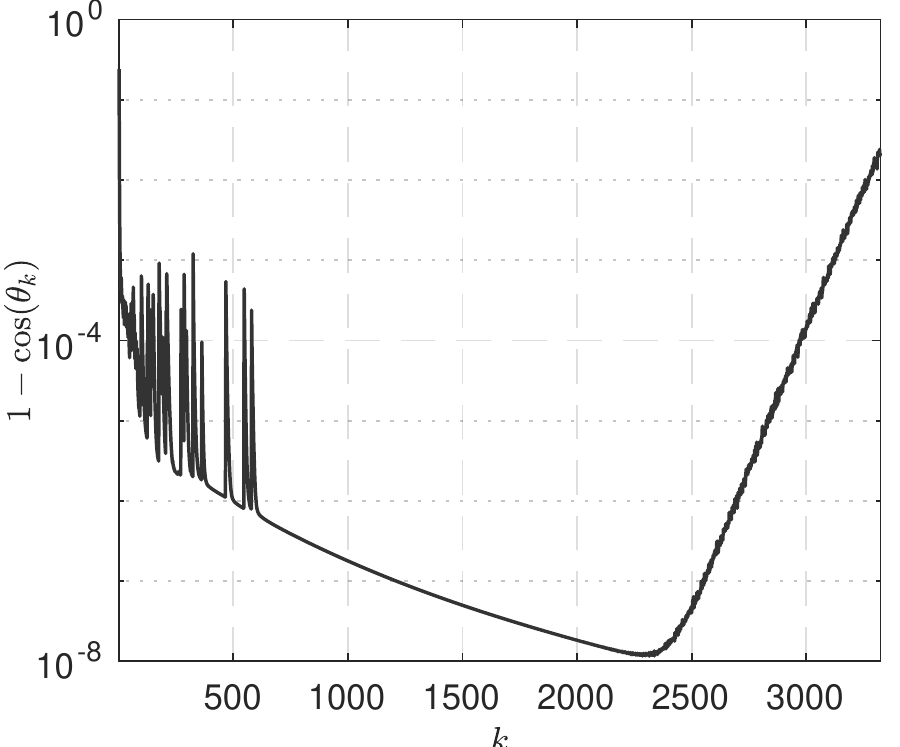} }   \hspace{2ex}
	 \subfloat[Comparison of $\norm{\xk-\xsol}$]{ \includegraphics[width=0.325\linewidth]{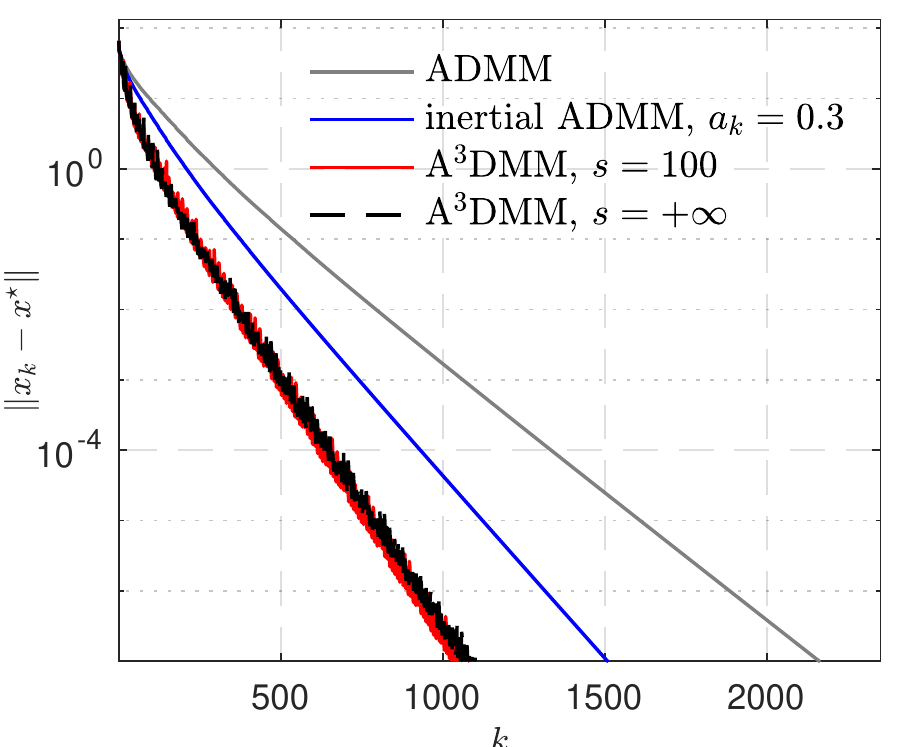} }  \\
	\caption{Performance comparisons and $\seq{\theta_k}$ of ADMM for quadratic programming.}
	\label{fig:cmp-qp}
\end{figure}

\subsection{Total variation based image inpainting}

Here consider a total variation (TV) based image inpainting problem. Let $u \in \bbR^{n \times n}$ be an image and $\calS \in \bbR^{n \times n}$ be a Bernoulli matrix, the observation of $u$ under $\calS$ is $f = \proj_{\calS}(u)$. The TV based image inpainting can be formulated as
\beq\label{eq:inp}
\min_{x\in\bbR^{n\times n}} ~ \norm{\nabla x}_{1} \quad \textrm{such~that} \quad \proj_{\calS}(x) = f  .
\eeq
Define $\Omega \eqdef \ba{x \in \bbR^{n\times n} : \proj_{\calS}(x) = f }$, then \eqref{eq:inp} becomes
\beq\label{eq:inp-xy}
\min_{x\in\bbR^{n\times n}, y\in\bbR^{2n\times n}} ~ \norm{y}_{1} + \iota_{\Omega}(x)  \quad \textrm{such~that} \quad \nabla x - y = 0  ,
\eeq
which is special case of \eqref{eq:problem-admm} with $A=\nabla, B=-\Id$ and $b=0$. For the update of $\xk$, we have from \eqref{eq:admm2} that
\[
\xk = \argmin_{x\in\bbR^{n\times n} }~ \iota_{\Omega}(x) + \sfrac{\gamma}{2} \norm{\nabla x - \tfrac{1}{\gamma} \pa{ \zbarkm - 2\psikm } }^2  ,
\]
which does not admit closed form solution. In the implementation, finite-step FISTA is applied to roughly solve the above problem.


In the experiment, the \texttt{cameraman} image is used, and $50\%$ of the pixels is removed randomly. 
The angle $\theta_k$ of ADMM and the comparisons of the four schemes are provided in Figure \ref{fig:cmp-inp}:
\begin{itemize}[leftmargin=2em]
\item Though both functions in \eqref{eq:inp-xy} are polyhedral, since the subproblem of $\xk$ is solved approximately, the eventual angle actually is oscillating instead of being a constant. 

\item Inertial ADMM again is slower than the original ADMM as the trajectory of ADMM is a spiral. 

\item For the two A$\!^3$DMM schemes, their performances are close as previous examples. 

\item For PSNR the image quality assessment, Figure \ref{fig:cmp-inp}(c) implies that A$\!^3$DMM is also the best. 
\end{itemize}
We also compare the visual quality of the images obtained by the four schemes for the $30$'th iteration, which is shown below in Figure \ref{fig:cmp-keq8}. It can be observed that the image quality (2nd row of Figure \ref{fig:cmp-keq8}) is much better than the 1st row of ADMM and inertial ADMM.


\begin{figure}[!ht]
	\centering
	\subfloat[Angle $\seq{\theta_k}$ of ADMM]{ \includegraphics[width=0.3\linewidth]{./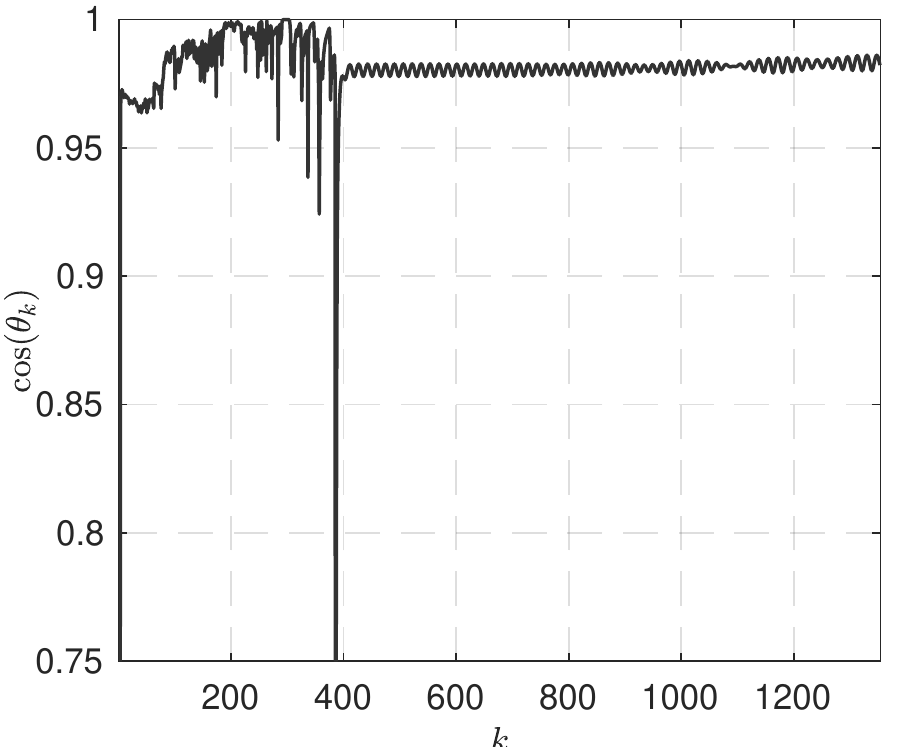} }  
	\subfloat[Comparison of $\norm{\xk-\xsol}$]{ \includegraphics[width=0.3\linewidth]{./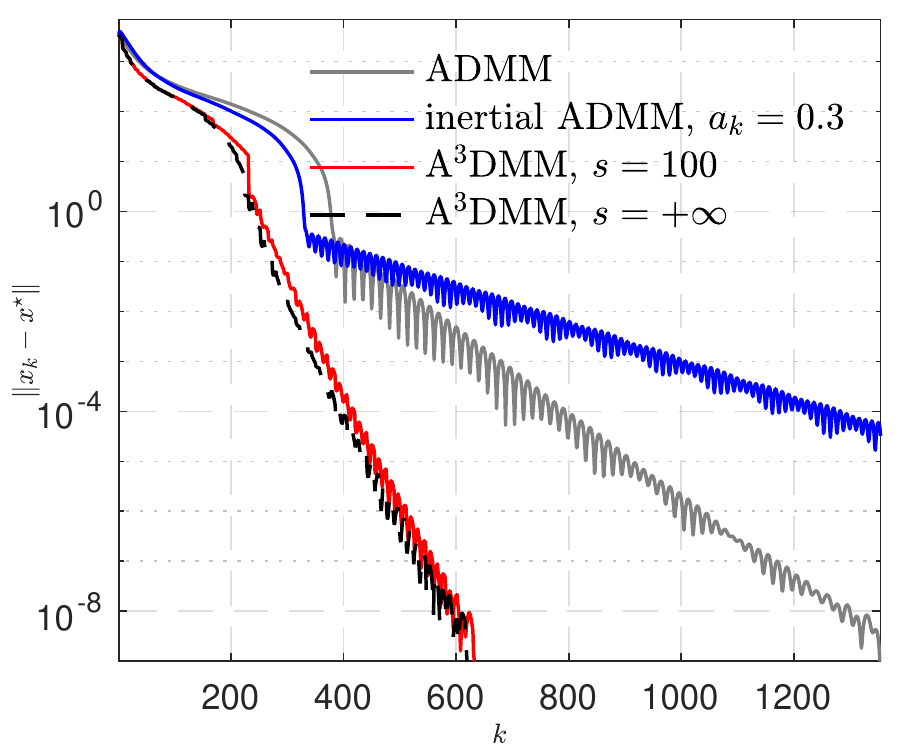} }   
	\subfloat[PSNR value]{ \includegraphics[width=0.3\linewidth]{./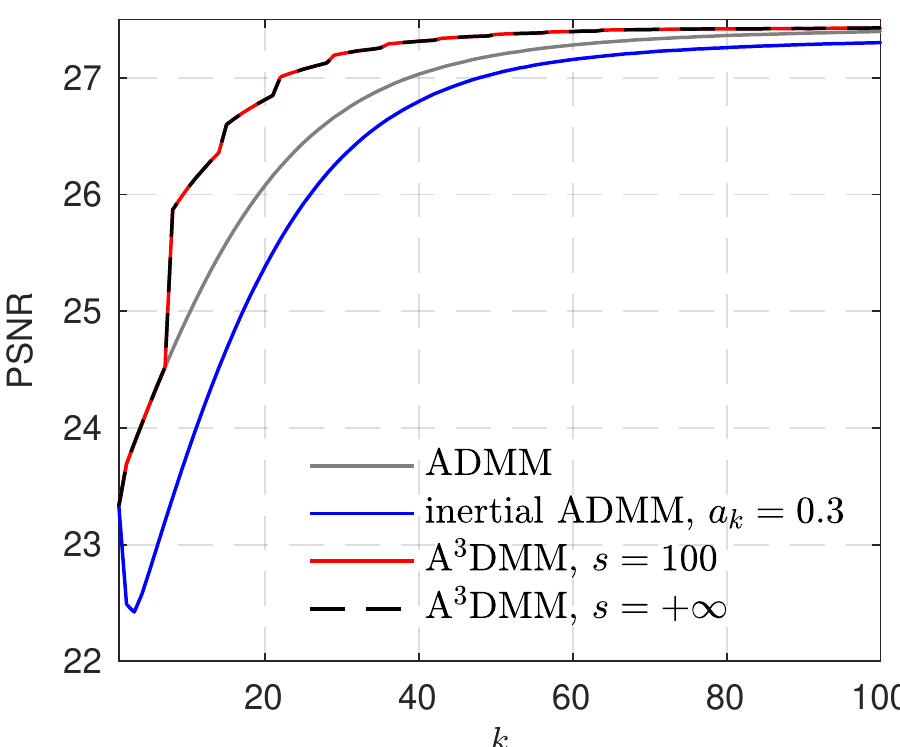} }   \\
	\caption{Property of $\seq{\theta_k}$, performance comparison and image quality of ADMM for TV based image inpainting.}
	\label{fig:cmp-inp}
\end{figure}

\begin{figure}[!ht]
	\centering
	\subfloat[Original image]{ \includegraphics[width=0.31\linewidth]{./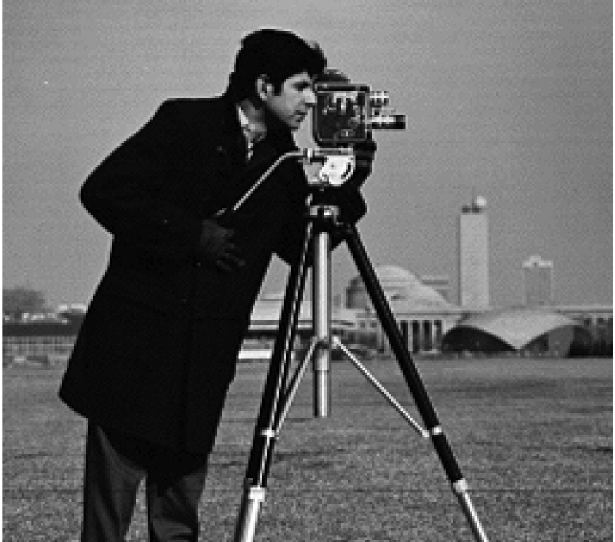} }  \hspace{2pt}
	\subfloat[Observed image]{ \includegraphics[width=0.31\linewidth]{./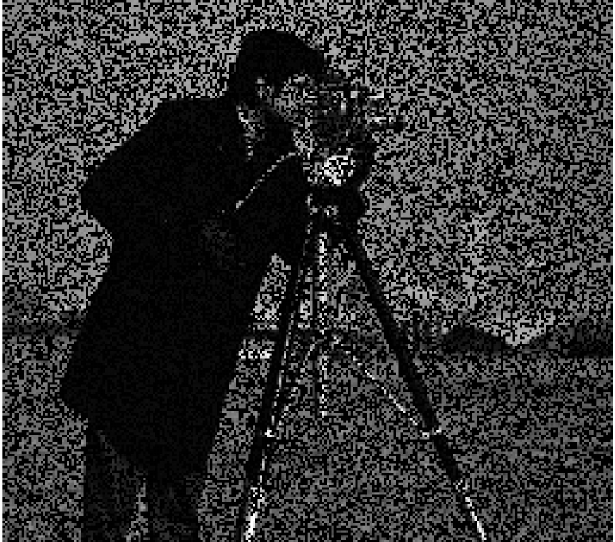} }   
\hspace{2pt}		
	\subfloat[ADMM, PSNR = 26.6935]{ \includegraphics[width=0.31\linewidth]{./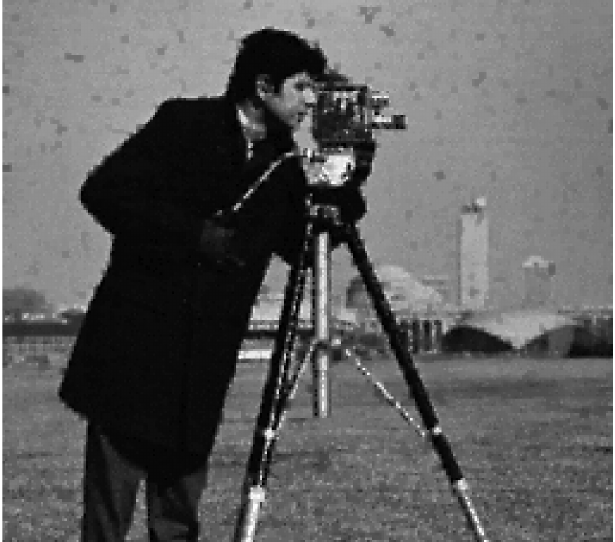} }  \\
	\subfloat[Inertial ADMM, PSNR = 26.3203]{ \includegraphics[width=0.31\linewidth]{./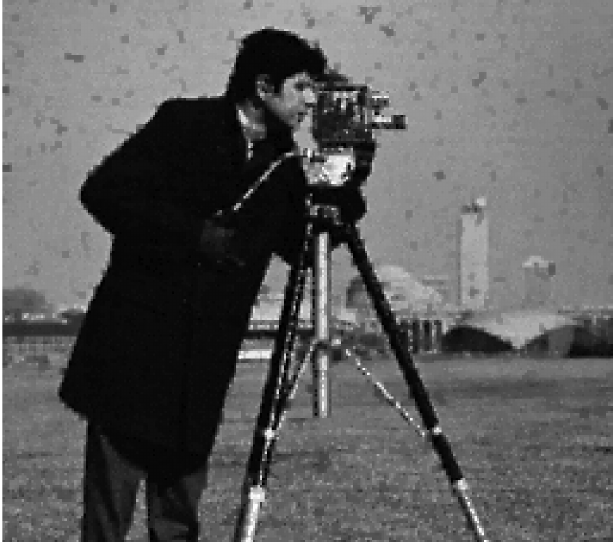} }   \hspace{2pt}
	\subfloat[A$\!^3$DMM $s = 100$, PSNR = 27.1668]{ \includegraphics[width=0.31\linewidth]{./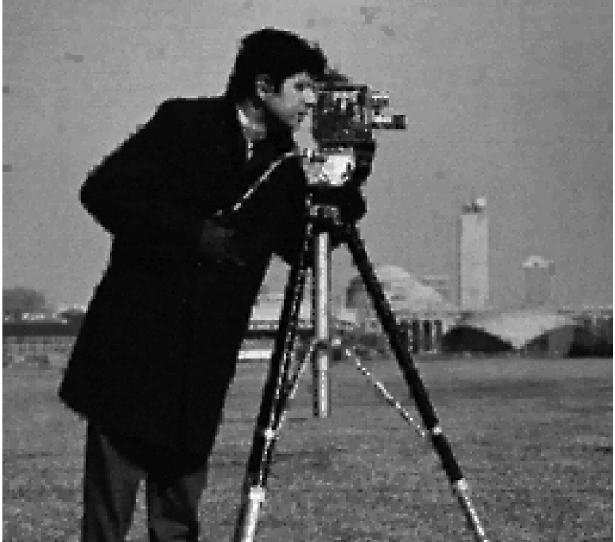} }   \hspace{2pt}
	\subfloat[A$\!^3$DMM $s = \pinf$, PSNR = 27.1667]{ \includegraphics[width=0.31\linewidth]{./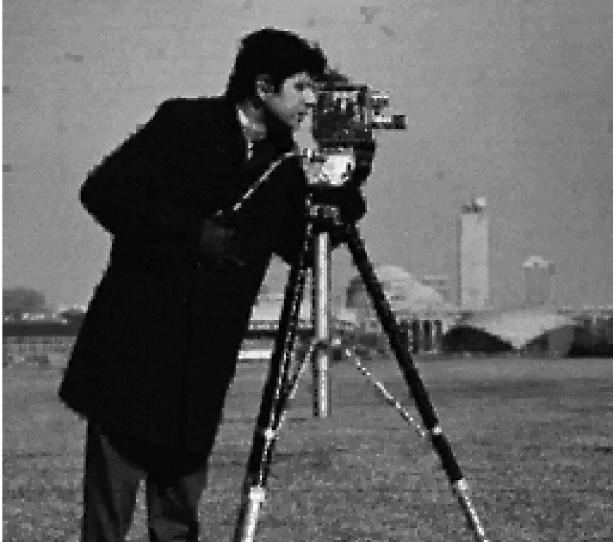} }   \\
	\caption{Comparison of image quality at the $30$'th iteration of ADMM, inertial ADMM and A$\!^3$DMM with two different prediction steps.}
	\label{fig:cmp-keq8}
\end{figure}

%
%
%
%

\section{Conclusions}
In this article, by analyzing the trajectory of the fixed point sequences associated to ADMM and extrapolating along the trajectory, we provide an alternative derivation of these methods.  Furthermore, our local linear analysis allows for the application of previous results on extrapolation methods, and hence provides guaranteed (local) acceleration. Extension of the proposed framework to general first-order methods is ongoing.

\section*{Acknowledgements}
We would like to thank Arieh Iserles for pointing out the connection between trajectory following adaptive acceleration and vector extrapolation. We also like to thank the reviewers whose comments helped to improve the paper. Jingwei Liang was partly supported by Leverhulme trust, Newton trust, the EPSRC centre ``EP/N014588/1'' and the Cantab Capital Institute for the Mathematics of Information (CCIMI).

\small
\bibliographystyle{plain}
\bibliography{bib}

%
%
%

    \setcounter{section}{0}
\renewcommand{\thesection}{\Alph{section}}



\vspace{1ex}
{\noindent}{\bf\Large Appendix}
\vspace{1ex}

{\noindent}The organization of the appendix is as follows: In Section \ref{sec:failure-inertial} we provide more discussions on the conditions when inertial fails. The proofs of the main results of the paper are contained in Sections \ref{sec:pre}-\ref{sec:ada-admm}, where in Section \ref{sec:pre} some preliminary result on angles between subspaces and Riemannian geometry are provided, in Section \ref{P-sec:tra-admm} the proofs for the trajectory of ADMM are provided, and lastly in in Section \ref{sec:proof_a3dmm} we provide proofs for A$\!^3$DMM.

\section{The failure of inertial acceleration continue}\label{sec:failure-inertial}

In this part, to support the discussion of Section \ref{sec:fail-inertial}, we provide extra discussion on why inertial acceleration, in particular Nesterov/FISTA, will fail when the (leading) eigenvalue of $M$ is complex.

Let $M \in \bbR^{n\times n}$ be a square matrix and consider the following linear equation
\beq\label{eq:its-M}
\zkp = M \zk . 
\eeq
According to \cite{polyak1987introduction}, \eqref{eq:its-M} is linearly convergent when the spectral radius of $M$ is strictly smaller than $1$, \ie $\rho(M) < 1$. For simplicity, consider the inertial version of \eqref{eq:its-M} with fixed inertial parameter $\ak \equiv a \in [0, 1]$, we get
\beq\label{eq:its-iM}
\begin{aligned}
	\yk &= \zk + a(\zk - \zkm)  \\
	\zkp &= M \yk .
\end{aligned}
\eeq
The above scheme corresponds to the local linearization of the inertial ADMM \eqref{eq:iadmm} without the small $o$-term. Define the augmented variable $\wk = \begin{pmatrix} \zk \\ \zkm \end{pmatrix}$ and block matrix $\tM \eqdef \begin{bmatrix} (1+a) M & - a M \\ \Id & 0 \end{bmatrix}$, then \eqref{eq:its-iM} can be written as
\beq\label{eq:its-tM}
\wkp = \tM \wk  . 
\eeq
To guarantee the convergence of \eqref{eq:its-tM}, we require the spectral radius satisfying $\rho(\tM) < 1$. Therefore, in the following, motivated by \cite{polyak1987introduction,liang2016thesis,liang2017activity}, we discuss the property of the spectral radius $\rho(\tM)$ and the conditions such that $\rho(\tM)<1$.


Let $\eta, \rho$ be the leading eigenvalues of $M$ and $\tM$, respectively. 
According to \cite[Proposition 4.6]{liang2017activity}, we have the following lemma regarding the relation between $\eta$ and $\rho$. 

\begin{lemma}[{\cite[Proposition 4.6]{liang2017activity}}]
Suppose $\begin{pmatrix} r_1  \\  r_2 \end{pmatrix}$ is the eigenvector of $\tM$ corresponding to eigenvalue $\rho$, then it must satisfy $r_1 = \rho r_2$. Moreover, 
$r_2$ is an eigenvector of $M$ associated to eigenvalue $\eta$, where $\eta$ and $\rho$ satisfy the relation
\beq\label{eq:eigenvalue-relation-2}
\rho^2 - (1+a)\eta \rho + a\eta = 0  .
\eeq
%
%
%
\end{lemma}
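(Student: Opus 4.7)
The plan is a direct block-matrix computation based on the eigenvalue equation $\tM w = \rho w$ with $w = (r_1, r_2)^T$. First I would write out this equation using the block structure of $\tM = \begin{bmatrix} (1+a)M & -aM \\ \Id & 0 \end{bmatrix}$, yielding the two component equations
\[
(1+a) M r_1 - a M r_2 = \rho r_1, \qquad r_1 = \rho r_2.
\]
The second equation immediately establishes the first claim $r_1 = \rho r_2$.

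Next I would substitute $r_1 = \rho r_2$ into the first component equation to obtain
\[
(1+a)\rho M r_2 - a M r_2 = \rho^2 r_2,
\]
i.e.\ $\bigl[(1+a)\rho - a\bigr] M r_2 = \rho^2 r_2$. Assuming the scalar factor $(1+a)\rho - a$ is non-zero, this shows $r_2$ is an eigenvector of $M$ with eigenvalue
\[
\eta = \frac{\rho^2}{(1+a)\rho - a},
\]
and rearranging gives the quadratic relation $\rho^2 - (1+a)\eta \rho + a\eta = 0$ as claimed.

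The only real subtlety is to handle the degenerate scalar factor: if $(1+a)\rho - a = 0$ then the above equation forces $\rho^2 r_2 = 0$, which (since $r_2$ must be non-zero, otherwise $r_1 = \rho r_2 = 0$ and $w$ would not be an eigenvector) implies $\rho = 0$, and hence also $a/(1+a) = 0$; both cases are easily seen to be consistent with the quadratic $\rho^2 - (1+a)\eta \rho + a\eta = 0$. I do not anticipate any genuine obstacle; the main thing to be careful about is ensuring $w\neq 0$ so that $r_2\neq 0$ and the derivation of the eigen-relation for $M$ is valid.
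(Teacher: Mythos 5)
Your proof is correct. In fact the paper does not supply a proof of this lemma at all --- it is stated as a citation to \cite[Proposition~4.6]{liang2017activity} --- so there is no in-paper argument to compare against, and a direct block eigenvalue computation of the kind you give is exactly the standard way to establish it.

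One small caveat on the degenerate case $(1+a)\rho - a = 0$: your reasoning correctly shows this forces $\rho = 0$ and $a = 0$, and that the quadratic $\rho^2 - (1+a)\eta\rho + a\eta = 0$ then holds vacuously. But in that case $\tM = \begin{bmatrix} M & 0 \\ \Id & 0 \end{bmatrix}$, and its kernel consists of all vectors $(0, r_2)^T$ with $r_2$ arbitrary, so $r_2$ need \emph{not} be an eigenvector of $M$; that part of the lemma's conclusion genuinely fails in this corner. This is an imprecision of the lemma as stated rather than a flaw in your argument, and it is irrelevant in the paper's setting (there $\rho$ is the leading eigenvalue of a nontrivial $\tM$ and $a$ is a positive inertial parameter), but you might flag it rather than asserting ``both cases are easily seen to be consistent.''
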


The relation \eqref{eq:eigenvalue-relation-2} is a simple quadratic equation of $\rho$, we have
\beq\label{eq:root-rho}
\rho = \sfrac{ (1+a)\eta + \sqrt{(1+a)^2\eta^2 - 4a\eta} }{ 2 }   .
\eeq
%
The value of $\abs{\rho}$ depends on $a$ and $\eta$, and the discussion splits into two scenarios: $\eta$ is real and $\eta$ is complex.

\subsection{Real $\eta$}

When $\eta$ is real valued, the property of $\rho$ is well studied, we refer to \cite{liang2017activity} and references therein for detailed discussions. 
Basically, we have that
\[
\abs{\rho}
=
\left\{
\begin{aligned}
(1+a)^2\eta^2 \geq 4a\eta &: \textrm{$\rho$ is real, $\abs{\rho}< 1$ holds for any $a\in [0, 1]$} , \\
(1+a)^2\eta^2 < 4a\eta &: \textrm{$\rho$ is complex, $\abs{\rho} = \sqrt{a\eta} < 1$ holds for any $a\in [0, 1]$.}
\end{aligned}
\right.
\]
The above result can be summarized below. 

\begin{lemma}[{\cite[Proposition 4.6]{liang2017activity}}]\label{lem:eta-real}
	Given any $a \in [0, 1]$, we have $\abs{\rho} < 1$ as long as $0 \leq \eta < 1$. 
\end{lemma}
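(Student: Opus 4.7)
The plan is to solve the quadratic \eqref{eq:eigenvalue-relation-2} explicitly for $\rho$ and bound the modulus of the two resulting roots by splitting on the sign of the discriminant $\Delta \eqdef (1+a)^2\eta^2 - 4a\eta = \eta\bigl[(1+a)^2\eta - 4a\bigr]$. The boundary values $\eta = 0$ (giving $\rho = 0$) and $a = 0$ (giving $\rho \in \{0,\eta\}$) are immediate, so I will assume $\eta, a > 0$ throughout.

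First I would handle the real case $\Delta \geq 0$, where $\rho_{\pm} = \tfrac{1}{2}\bigl((1+a)\eta \pm \sqrt{\Delta}\bigr)$. To show $\rho_+ < 1$, observe that $a \leq 1$ and $\eta < 1$ together imply $(1+a)\eta < 2$, so one may legitimately square the rearranged inequality $\sqrt{\Delta} < 2 - (1+a)\eta$; after cancelling the $(1+a)^2\eta^2$ terms, it reduces to the hypothesis $\eta < 1$. Symmetrically, $\rho_- > -1$ is equivalent (after a valid squaring, both sides being nonnegative) to $0 < 4 + 4\eta(1 + 2a)$, which is trivial.

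Next I would handle the complex case $\Delta < 0$, where the two roots form a complex conjugate pair. Vieta's formulas applied to the monic quadratic yield $\rho\,\bar\rho = a\eta$, hence $|\rho|^2 = a\eta \leq \eta < 1$. Combining the two cases concludes the proof.

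The argument is elementary and I do not expect any serious obstacle; the only point requiring care is verifying that the squaring steps in the real case are reversible, which is exactly where the bounds $a \leq 1$ and $\eta < 1$ enter to ensure $2 - (1+a)\eta > 0$.
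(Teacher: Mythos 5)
Your proof is correct and follows the same route the paper sketches: split on the sign of the discriminant $\Delta=(1+a)^2\eta^2-4a\eta$, bound the real roots directly, and use $|\rho|^2=a\eta$ in the complex-conjugate case. The paper delegates the detailed verification to \cite[Proposition 4.6]{liang2017activity} and merely records the case split; you have simply supplied the elementary computations it omits (and, incidentally, the $\rho_->-1$ check is redundant since both real roots are nonnegative when $\Delta\geq 0$).
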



To demonstrate the above result, we consider fixing $\eta$ and varying $a \in [0, 1]$. Two choices of $\eta$ are considered $\eta = 0.9, 0.98$, the value of $\abs{\rho}$ is plotted in Figure \ref{fig:rho} in black line. It can be observed that $\abs{\rho}$ is strictly smaller than one for both choices of $\eta$. 
Note that $\abs{\rho}$ reaches a minimal value for some $a$, we refer to \cite{liang2017activity} for detailed discussion on this.

\begin{figure}[!ht]
	\centering
	\subfloat[$\abs{\eta} = 0.9$]{ \includegraphics[width=0.4\linewidth]{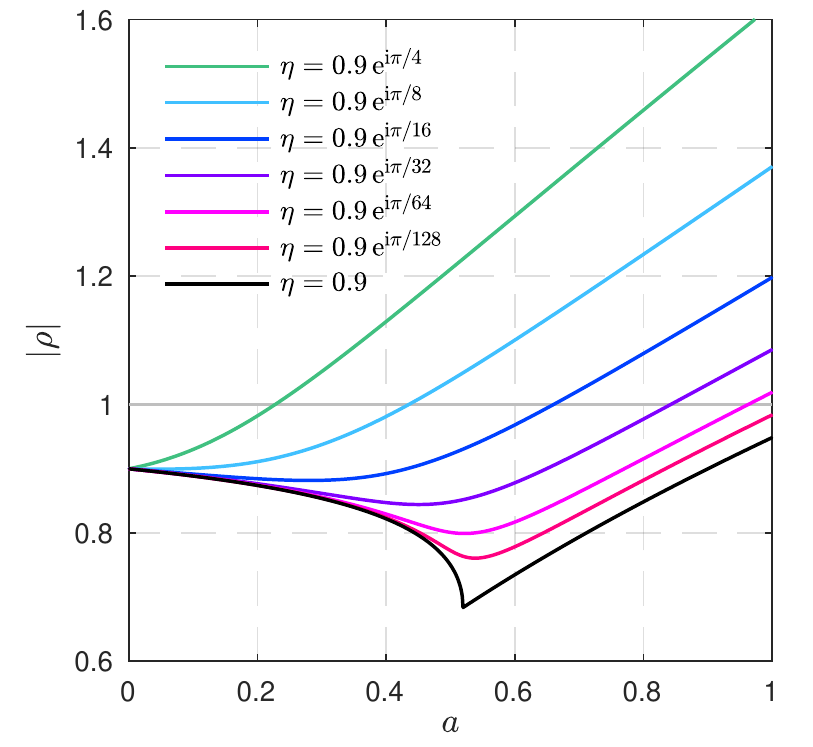} }   {\hspace{8pt}}
	\subfloat[$\abs{\eta} = 0.98$]{ \includegraphics[width=0.4\linewidth]{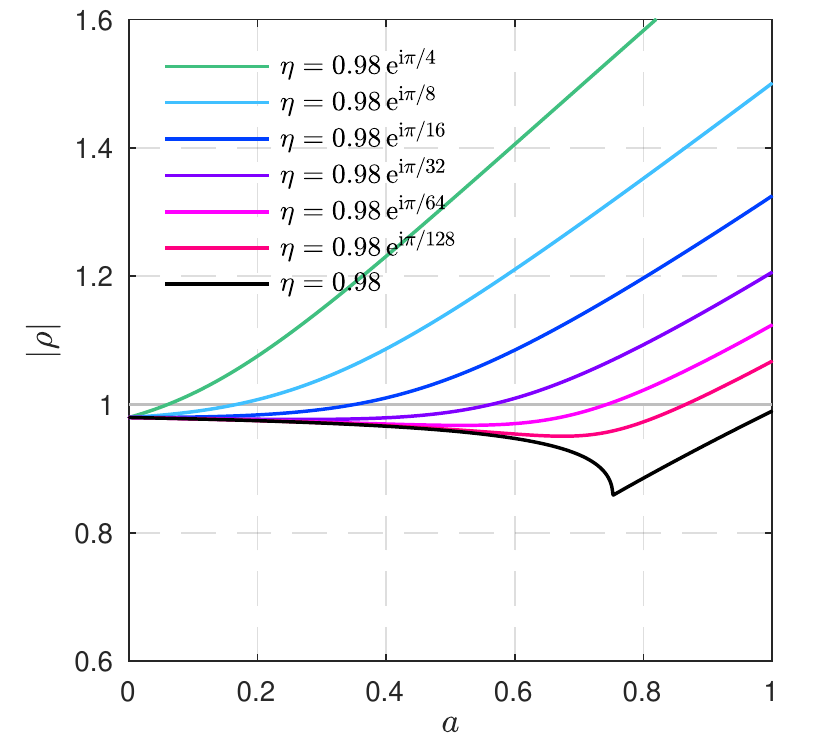} }    \\
	\caption{The value of $\abs{\rho}$ under fixed $\abs{\eta}$ and $a \in [0, 1]$.}
	\label{fig:rho}
\end{figure}

\renewcommand{\e}{\mathrm{e}}
\renewcommand{\i}{\mathrm{i}}

\subsection{Complex $\eta$}

When $\eta$ is complex, it can be written as $\eta = \abs{\eta} \e^{\i \alpha}$ where $\alpha$ is the argument of $\eta$. The dependence of $\abs{\rho}$ on $a$ and $\eta$ becomes much more complicated, below we briefly demonstrate where the difficulties arise and provide numerical proof for the properties of $\abs{\rho}$.

\paragraph{General form $\eta = \abs{\eta}e^{\i\alpha}$}
For this case, we have
\[
\begin{aligned}
\rho 
= \sfrac{ (1+a)\eta + \sqrt{(1+a)^2\eta^2 - 4a\eta} }{ 2 }    
&= \sfrac{ (1+a)\abs{\eta} \e^{\i \alpha} + \ssqrt{ (1+a)^2\abs{\eta}^2 \e^{\i 2\alpha} - 4a\abs{\eta}\e^{\i \alpha} } }{ 2 }    .
\end{aligned}
\]
Suppose $(x+\i y)^2 = (1+a)^2\abs{\eta}^2 \e^{\i 2\alpha} - 4a\abs{\eta} \e^{\i \alpha}$, we get
\[
\begin{aligned}
x^2 - y^2 &= (1+a)^2\abs{\eta}^2 \cos\pa{2\alpha} - 4a\abs{\eta}\cos(\alpha)  \\
x y &= \sfrac{ (1+a)^2\abs{\eta}^2 \sin\pa{2\alpha} - 4a\abs{\eta}\sin(\alpha) }{ 2 }  ,
\end{aligned}
\]
which can be simplified to a equation of $x$
\[
x^4 - \Pa{(1+a)^2\abs{\eta}^2 \cos\pa{2\alpha} - 4a\abs{\eta}\cos(\alpha)} x^2 - \sfrac{ ( (1+a)^2\abs{\eta}^2 \sin\pa{2\alpha} - 4a\abs{\eta}\sin(\alpha) )^2 }{ 4 } = 0  .
\]
Solving the above equation, we get
\[
\begin{aligned}
x &= \BPa{ \tfrac{ \pa{(1+a)^2\abs{\eta}^2 \cos\pa{2\alpha} - 4a\abs{\eta}\cos(\alpha)} + \ssqrt{ \pa{(1+a)^2\abs{\eta}^2 \cos\pa{2\alpha} - 4a\abs{\eta}\cos(\alpha)}^2 + \pa{ (1+a)^2\abs{\eta}^2 \sin\pa{2\alpha} - 4a\abs{\eta}\sin(\alpha) }^2 }}{2} }^{1/2} , \\
y &= \sfrac{ (1+a)^2\abs{\eta}^2 \sin\pa{2\alpha} - 4a\abs{\eta}\sin(\alpha) }{ 2 x } ,
\end{aligned}
\]
here we only take the positive root $x$. 
Back to the expression of $\rho$, we get
\[
\begin{aligned}
\rho 
= \sfrac{ (1+a)\abs{\eta} \e^{\i \alpha} + (x + \i y ) }{ 2 }    
= \sfrac{ \pa{ (1+a)\abs{\eta}\cos(\alpha) + x } + \i \pa{ (1+a)\abs{\eta}\sin(\alpha) + y } }{2}  .
\end{aligned}
\]
Given the complicated form of $x$, the analysis of $\abs{\rho}$ becomes rather difficult. Therefore, below we discuss the property of $\abs{\rho}$ through numerical verification.

Similar to the real $\eta$ case, $\abs{\eta} = 0.9, 0.98$ are considered. Denote $\alpha$ the argument of $\eta$, then we have $\eta = \abs{\eta} \mathrm{e}^{\mathrm{i} \alpha}$. In total, six choices of $\alpha$ are considered: $\alpha \in \ba{\frac{\pi}{4}, \frac{\pi}{8}, \frac{\pi}{16}, \frac{\pi}{32}, \frac{\pi}{64}, \frac{\pi}{128}}$. The value of $\abs{\rho}$ are shown in Figure \ref{fig:rho}. Taking Figure \ref{fig:rho} (a) for example, we have the following observations: 
\begin{itemize}[leftmargin=2em]
	\item For all choices of $\alpha$ except $\alpha = \frac{\pi}{128}$, there exists an $a_{\alpha} < 1$ such that $\abs{\rho} \geq 1$ for $a \in [a_{\alpha}, 1]$.
	
	\item The larger the value of $\alpha$, the smaller the value of $a_{\alpha}$, see the green line in both figures. 
	
\end{itemize}
From the above discussion, we can conclude that 
\begin{itemize}[leftmargin=2em]
	\item The inertial scheme is robust when all the eigenvalues of $M$ are real, and we can afford the inertial parameter up to $1$ which includes the FISTA \cite{fista2009} schemes as $\ak \to 1$, same for the Nesterov's accelerated gradient descent. 
	
	\item When $M$ has complex eigenvalue(s), which is not necessary to the leading eigenvalue, the largest value of $a$ can be allowed is smaller than $1$ and FISTA/Nesterov's scheme will fail. 
	
\end{itemize}
To complete the discussion, we consider the values of $\abs{\rho}$ under $\alpha \in [0, \pi/2]$ and $a \in [0, 1]$. The results are shown below in Figure \ref{fig:rho_alpha_a}. Again $\abs{\eta} = 0.9, 0.98$ are considered. The horizontal axis is for $\alpha$ while the vertical is for $a$, each point inside the square stands for the value of $\abs{\rho}$ with colorbar provided. In each figure: 
\begin{itemize}[leftmargin=2em]
	\item The {\em red} line stands for $\abs{\rho}=1$. Therefore, only for the area below the red line we have $\abs{\rho} < 1$. Given any $\alpha \in [0, \pi/2]$, the larger the value of $\alpha$, the smaller range of choice of $a$ such that $\abs{\rho} < 1$. This coincides with the observations from Figure \ref{fig:rho}. 
	
	\item The {\em magenta} line stands for $\abs{\rho}=\abs{\eta}$. Only the small area below the magenta line has $\abs{\rho} < \abs{\eta}$, meaning that acceleration can be obtained. As a result, given $\eta = \abs{\eta} \e^{\i\alpha}$, when $\alpha$ is large enough, such as about $\pi/8$ for $\abs{\eta}=0.9$, inertial will fail to provide acceleration. 
	
\end{itemize}

\begin{figure}[!ht]
	\centering
	\subfloat[$\abs{\eta} = 0.9$]{ \includegraphics[width=0.475\linewidth]{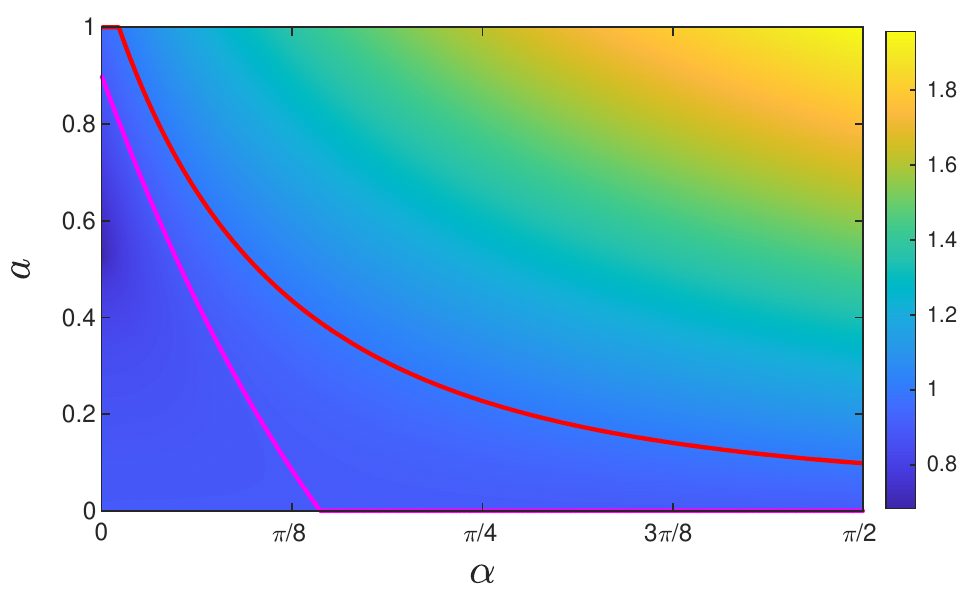} }   {\hspace{2pt}}
	\subfloat[$\abs{\eta} = 0.98$]{ \includegraphics[width=0.475\linewidth]{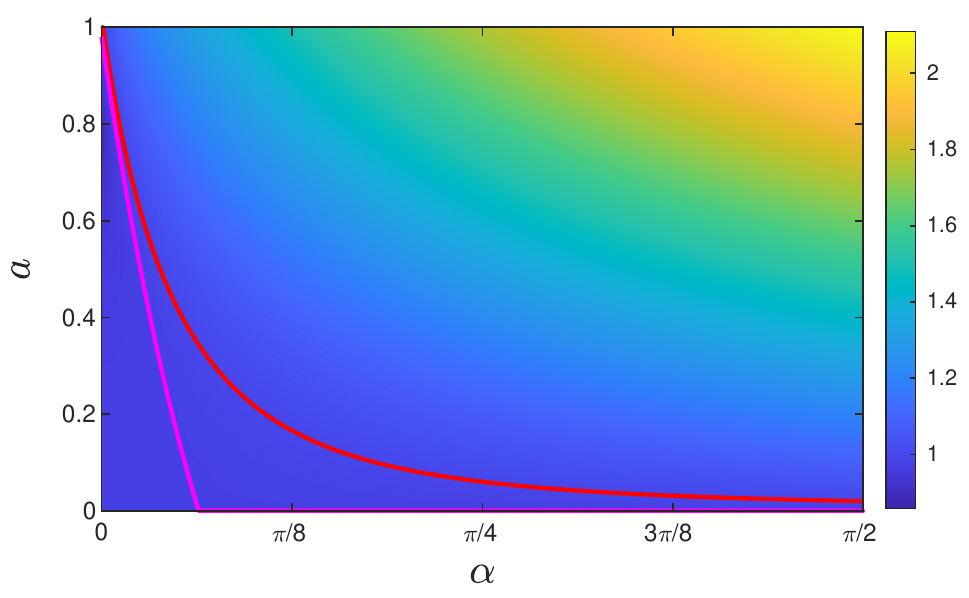} }    \\
	\caption{The value of $\abs{\rho}$ under fixed $\eta$ and $a \in [0, 1]$.}
	\label{fig:rho_alpha_a}
\end{figure}

It should be noted that, for the above discussion, we consider the case that the leading eigenvalue is {\em complex}, while the rest of the eigenvalues are {\em real}. 
For the case leading eigenvalue is {\em real} while the rest are {\em complex}, then the spectral radius of $\tM$ will be determined by the non-leading complex eigenvalues when the inertial parameter $a$ is large enough. Consequently, the FISTA inertial parameter rule still can not be applied, unless the magnitude of the leading eigenvalue is small enough; See Figure \ref{fig:rho_alpha_a} (a).

\begin{figure}[!ht]
	\centering
	\subfloat[$\alpha \in \ba{\frac{\pi}{4}, \frac{\pi}{8}, \frac{\pi}{16}, \frac{\pi}{32}, \frac{\pi}{64}, \frac{\pi}{128}}$]{ \includegraphics[width=0.35\linewidth]{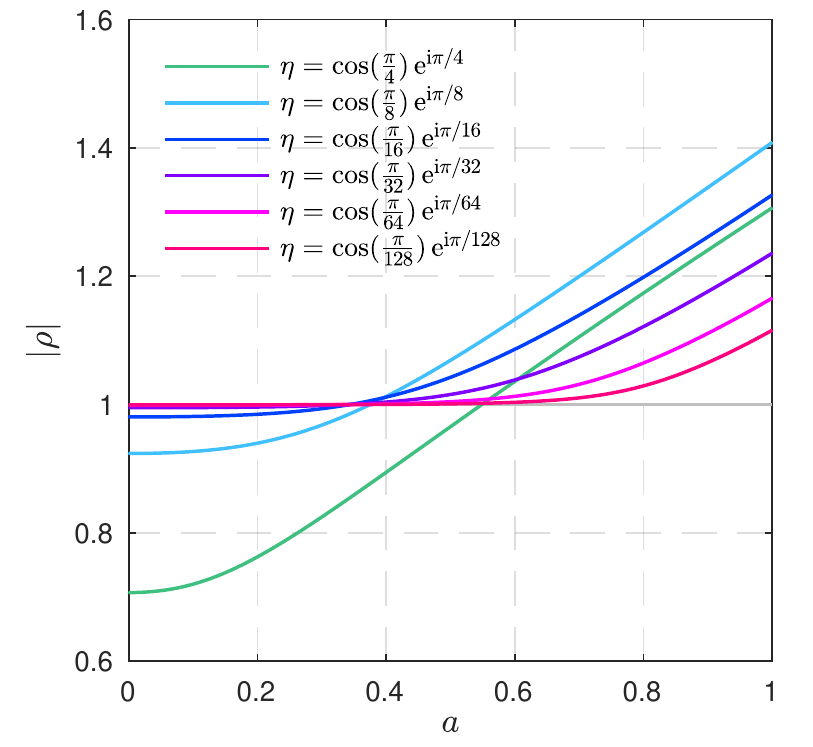} }   {\hspace{16pt}}
	\subfloat[{$\alpha \in [0, \frac{\pi}{2}]$}]{ \includegraphics[width=0.525\linewidth]{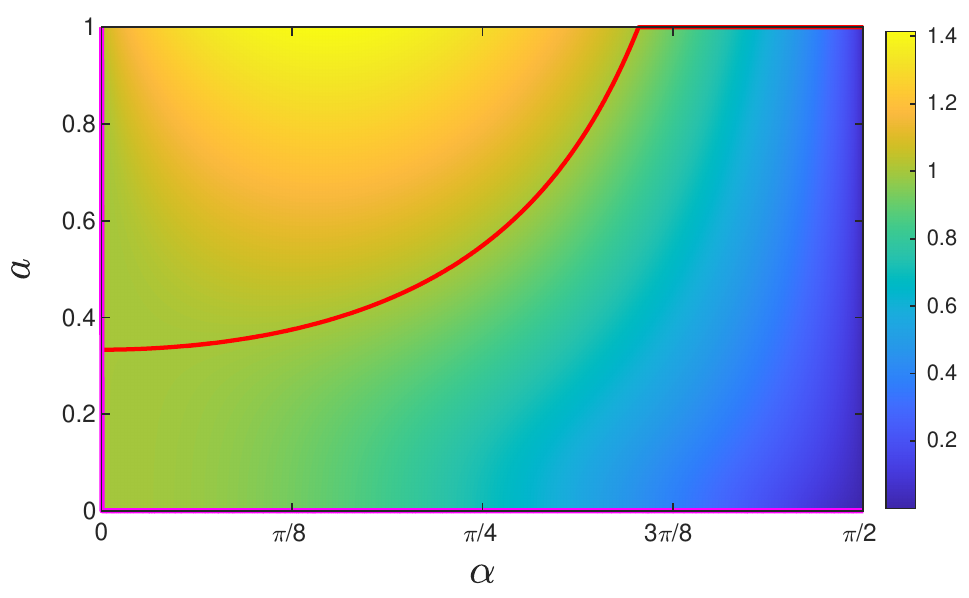} }    \\
	\caption{The value of $\abs{\rho}$ when $\eta = \cos(\alpha)\e^{\i \alpha}$ and $a \in [0, 1]$.}
	\label{fig:rho_special}
\end{figure}

\paragraph{Special case $\eta = \cos{\alpha}e^{\i\alpha}$}
Now we consider a special case where $\eta = \cos(\alpha) e^{\i\alpha}, \alpha \in [0, \pi/2]$ which corresponds to the case $R, J$ in \eqref{eq:problem-admm} are locally polyhedral around $\xsol, \ysol$. 
Similar to above, six choices of $\alpha$ are considered: $\alpha \in \ba{\frac{\pi}{4}, \frac{\pi}{8}, \frac{\pi}{16}, \frac{\pi}{32}, \frac{\pi}{64}, \frac{\pi}{128}}$.
The value of $\abs{\rho}$ is shown below in Figure \ref{fig:rho_special} (a). It can be observed that, for each $\alpha$, the value of $\abs{\rho}$ is monotonically increasing as the value of $a$ increases, which means {\em inertial slows down the speed of convergence}. 
In Figure \ref{fig:rho_special} (b), we consider the value of $\abs{\rho}$ under $\alpha \in [0, \pi/2]$ and $a \in [0, 1]$. We have
\begin{itemize}[leftmargin=2em]
	\item Similar to Figure \ref{fig:rho_alpha_a}, the {\em red} line stands for $\abs{\rho}=1$. For each $\alpha$, $\abs{\rho} < 1$ for all the choices of $a$ under the red line. 
	
	\item The {\em magenta} line stands for $\abs{\rho}=\abs{\eta}$. It can be observed that, except for $\alpha = 0$ where $\abs{\rho} = 1$ holds for all $a\in [0, 1]$, $\abs{\rho} = 1$ holds only for $a=0$ when $\alpha \in ]0, \pi/2]$. 
	
\end{itemize}
Therefore, we can conclude that when $R, J$ are locally polyhedral around the solution $\xsol, \ysol$, inertial scheme will not provide any acceleration.

\section{Preparatory materials}\label{sec:pre}

\subsection{Polynomial extrapolation}

Minimal polynomial extrapolation (MPE) \cite{cabay1976polynomial}:
Given $\{z_{k-j}\}_{j=0}^{q+1}$, let $\{v_{k-j}\}_{j=0}^q$ be the difference vectors, where $v_j \eqdef z_j - z_{j-1}$. Define $V_{k} = \begin{bmatrix}
v_k&\cdots & v_{k-q}
\end{bmatrix}$. 
\begin{enumerate}[leftmargin=2em]
\item Let $\{c_j\}_{j=1}^q\in \argmin_{c\in \RR^q} \norm{V_{k-1} c - v_k}$, define $c_0\eqdef 1$ and $\gamma_i = c_i/\sum_{i=0}^q c_i$ for $i=0,\ldots, q$.
\item The extrapolated point is then defined to be ${\zbar_{k}} \eqdef \sum_{i=0}^q \gamma_i z_{k-i-1}$.
\end{enumerate}
Reduced rank extrapolation (RRE) \cite{eddy1979extrapolating,mevsina1977convergence} is obtained by replacing the first step by $$\{\gamma_j\}_{j=0}^q\in \argmin_{\gamma\in\RR^{q+1}} \norm{V_k \gamma} \text{ subject to } \msum_i \gamma_i =1.$$ The motivation for the use of such methods for the acceleration of fixed point sequences $x_{k+1} = \Ff(\zk)$ come from considering the spectral properties of the linearization around the limit point. In particular, if $\zsol$ is the limit point and $\zkp -\zsol = T(\zk - \zsol)$ where $T\in \RR^{d\times d}$ and $q$ is the order of the minimal polynomial of $T$ with respect to $z_{k-q-1} - \zsol$ (i.e. $q$ is the monic polynomial of least degree such that $P(T)(z_{k-q-1} -\zsol) = 0$), then one can show that ${\zbar_{k}} = \zsol$. We refer to \cite{sidi1986acceleration,sidi1998upper,sidi2017vector} for details on these methods and their acceleration guarantees. 

\subsection{Angle between subspaces}
\label{P-sec:angles}

Let $T_1, T_2$ be two subspaces, and without the loss of generality, assume $$1\leq p\eqdef\dim(T_1) \leq q\eqdef\dim(T_2) \leq n-1.$$

\begin{definition}[Principal angles] \label{def:principal_angle}
The principal angles $\theta_k \in [0,\frac{\pi}{2}]$, $k=1,\ldots,p$ between subspaces $T_1$ and $T_2$ are defined by, with $u_0 = v_0 \eqdef 0$, and
\begin{align*}
\cos(\theta_k) \eqdef \iprod{u_k}{v_k} = \max \iprod{u}{v} ~~~\mathrm{s.t.}~~~
& u \in T_1, v \in T_2, \norm{u}=1, \norm{v}=1, \\
&\iprod{u}{u_i}=\iprod{v}{v_i}=0, ~ i=0,\dotsm,k-1 .
\end{align*}
The principal angles $\theta_k$ are unique and satisfy $0 \leq \theta_1 \leq \theta_2 \leq \dotsm \leq \theta_p \leq \pi/2$.
\end{definition}

\begin{definition}[Friedrichs angle]\label{def:friedrichs-angle}
The Friedrichs angle $\theta_{F} \in ]0,\frac{\pi}{2}]$ between $T_1$ and $T_2$ is
\begin{equation*}
\cos\Pa{ \theta_F(T_1,T_2) } \eqdef \max \iprod{u}{v} ~~~\mathrm{s.t.}~~~
u \in T_1 \cap (T_1 \cap T_2)^\perp, \norm{u}=1 ,~
v \in T_2 \cap (T_1 \cap T_2)^\perp, \norm{v}=1 .
\end{equation*}
\end{definition}

The following lemma shows the relation between the Friedrichs and principal angles, whose proof can be found in \cite[Proposition~3.3]{Bauschke14}.

\begin{lemma}[Principal angles and Friedrichs angle]
\label{lem:fapa}
The Friedrichs angle is exactly $\theta_{d+1}$ where $d \eqdef \dim(T_1 \cap T_2)$. Moreover, $\theta_{F}(T_1,T_2) > 0$.
\end{lemma}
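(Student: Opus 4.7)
}

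The plan is to show that the recursive definition of principal angles automatically produces an orthonormal basis of $T_1 \cap T_2$ in its first $d$ steps, after which the maximization reduces exactly to the Friedrichs angle problem. I would organize this in three stages.

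First, I would establish that $\theta_1 = \cdots = \theta_d = 0$ and that the associated vectors $u_i = v_i$ form an orthonormal basis of $T_1 \cap T_2$. Since $\dim(T_1 \cap T_2) = d \geq 0$, I pick any orthonormal basis $w_1, \dots, w_d$ of $T_1 \cap T_2$. Setting $u = v = w_1$ gives $\iprod{u}{v} = 1$, which is maximal among unit vectors, so $\cos\theta_1 = 1$ and hence $\theta_1 = 0$. By an inductive argument using the orthogonality constraints, the $k$-th step for $k \leq d$ similarly attains $\cos\theta_k = 1$ by taking $u_k = v_k \in T_1 \cap T_2$ orthogonal to the previously chosen vectors; the Cauchy--Schwarz equality case forces $u_k = v_k$ whenever $\iprod{u_k}{v_k} = 1$. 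After $d$ steps, $\{u_1,\dots,u_d\}$ is an orthonormal family in $T_1 \cap T_2$, and by dimension count it is a basis.

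Second, I would identify $\theta_{d+1}$ with $\theta_F$. At step $k = d+1$, the feasible set for $u$ is $\{u \in T_1 : \norm{u}=1,\ \iprod{u}{u_i} = 0,\ i = 1,\dots,d\}$, and similarly for $v$. Since $\{u_i\}_{i=1}^d$ is an orthonormal basis of $T_1 \cap T_2$, the orthogonality constraints on $u$ are equivalent to $u \perp T_1 \cap T_2$, i.e.\ $u \in T_1 \cap (T_1 \cap T_2)^\perp$. The same rewriting applies to $v$. The constraints involving $v_i$ for $i \leq d$ give the same subspace because $v_i = u_i$. Therefore the optimization problem defining $\cos\theta_{d+1}$ coincides exactly with the one defining $\cos\theta_F$, establishing the identification.

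Third, for strict positivity $\theta_F > 0$, I argue by contradiction: if $\theta_F = 0$ then $\cos\theta_F = 1$ is attained by some unit vectors $u \in T_1 \cap (T_1 \cap T_2)^\perp$ and $v \in T_2 \cap (T_1 \cap T_2)^\perp$ with $\iprod{u}{v} = 1$. The equality case of Cauchy--Schwarz forces $u = v$, so $u \in T_1 \cap T_2$. Combined with $u \in (T_1 \cap T_2)^\perp$, this yields $u = 0$, contradicting $\norm{u} = 1$.

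The only subtle step is the first one, specifically verifying that the greedy recursive maximization really does pick vectors lying inside $T_1 \cap T_2$ (and coinciding as $u_k = v_k$) for the first $d$ steps rather than some other optimizer; this is handled cleanly by the Cauchy--Schwarz equality case applied to unit vectors achieving inner product $1$. Everything else is a direct translation between the constraint sets of the two definitions.
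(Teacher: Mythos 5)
The paper itself offers no proof of this lemma; it simply cites \cite[Proposition~3.3]{Bauschke14}. Your argument is a correct, self-contained proof, and it is essentially the standard one underlying that reference: show by induction (using the Cauchy--Schwarz equality case to pin down the optimizers) that the first $d$ pairs of principal vectors coincide and form an orthonormal basis of $T_1 \cap T_2$, observe that the orthogonality constraints at step $d+1$ then carve out exactly the feasible set in the Friedrichs angle definition, and deduce strict positivity again from the Cauchy--Schwarz equality case. The one subtlety you flag --- that the greedy recursion really does stay inside $T_1 \cap T_2$ for the first $d$ steps even though the optimizers at each step are not a priori unique --- is correctly resolved: once the optimum value is $1$, \emph{any} valid optimizer pair satisfies $u_k = v_k$ by the equality case, hence lies in $T_1 \cap T_2$. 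Two minor points are worth recording. First, the lemma implicitly assumes $d < p = \min(\dim T_1,\dim T_2)$, since otherwise $\theta_{d+1}$ is not in the enumeration $k=1,\ldots,p$; when $d = p$ one of the subspaces is contained in the other, the Friedrichs feasible set is empty, and by convention $\theta_F = \pi/2$, so the positivity claim still holds vacuously. Second, the paper's Definition~\ref{def:friedrichs-angle} already writes $\theta_F \in \left]0,\frac{\pi}{2}\right]$, so the range there is really a foretaste of the lemma's second claim rather than part of the definition; your proof is what actually justifies it.
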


\subsection{Riemannian Geometry}
\label{subsec:riemgeom}

Let $\calM$ be a $C^2$-smooth embedded submanifold of $\bbR^n$ around a point $x$. With some abuse of terminology, we shall state $C^2$-manifold instead of $C^2$-smooth embedded submanifold of $\bbR^n$. The natural embedding of a submanifold $\calM$ into $\bbR^n$ permits to define a Riemannian structure and to introduce geodesics on $\calM$, and we simply say $\calM$ is a Riemannian manifold. We denote respectively $\tanSp{\Mm}{x}$ and $\normSp{\Mm}{x}$ the tangent and normal space of $\Mm$ at point near $x$ in $\calM$.
%
%

\paragraph{Exponential map}
Geodesics generalize the concept of straight lines in $\bbR^n$, preserving the zero acceleration characteristic, to manifolds. Roughly speaking, a geodesic is locally the shortest path between two points on $\calM$. We denote by $\mathfrak{g}(t;x, h)$ the value at $t \in \bbR$ of the geodesic starting at $\mathfrak{g}(0;x,h) = x \in \calM$ with velocity $\dot{\mathfrak{g}}(t;x, h) = \sfrac{d\mathfrak{g}}{dt}(t;x,h) = h \in \tanSp{\Mm}{x}$ (which is uniquely defined). 
%
For every $h \in \tanSp{\calM}{x}$, there exists an interval $I$ around $0$ and a unique geodesic $\mathfrak{g}(t;x, h): I \to \calM$ such that $\mathfrak{g}(0; x, h) = x$ and $\dot{\mathfrak{g}}(0;x, h) = h$.
The mapping
 \[
\Exp_x 
: \tanSp{\calM}{x} \to \calM ,~~ h\mapsto \Exp_{x}(h) = \mathfrak{g}(1;x, h) ,
\]
is called \emph{Exponential map}.
Given $x, x' \in \calM$, the direction $h \in \tanSp{\calM}{x}$ we are interested in is such that 
\[
\Exp_x(h) = x' = \mathfrak{g}(1;x, h) .
\]

\paragraph{Parallel translation}

Given two points $x, x' \in \calM$, let $\tanSp{\calM}{x}, \tanSp{\calM}{x'}$ be their corresponding tangent spaces. Define 
\[
\tau : \tanSp{\calM}{x} \to \tanSp{\calM}{x'} ,
\]
the parallel translation along the unique geodesic joining $x$ to $x'$, which is isomorphism and isometry w.r.t. the Riemannian metric.

\paragraph{Riemannian gradient and Hessian}

For a vector $v \in \normSp{\calM}{x}$, the Weingarten map of $\calM$ at $x$ is the operator $\Wgtmap{x}\pa{\cdot, v}: \tanSp{\calM}{x} \to \tanSp{\calM}{x}$ defined by
\[
\Wgtmap{x}\pa{\cdot, v} = - \PT{\tanSp{\calM}{x}} \mathrm{d} V[h] ,
\]
where $V$ is any local extension of $v$ to a normal vector field on $\calM$. The definition is independent of the choice of the extension $V$, and $\Wgtmap{x}(\cdot, v)$ is a symmetric linear operator which is closely tied to the second fundamental form of $\calM$, see \cite[Proposition II.2.1]{chavel2006riemannian}.

Let $G$ be a real-valued function which is $C^2$ along the $\calM$ around $x$. The covariant gradient of $G$ at $x' \in \calM$ is the vector $\nabla_{\calM} G(x') \in \tanSp{\calM}{x'}$ defined by
\[
\iprod{\nabla_{\calM} G(x')}{h} = \sfrac{d}{dt} G\Pa{\PT{\calM}(x'+th)}\big|_{t=0} ,~~ \forall h \in \tanSp{\calM}{x'},
\]
where $\PT{\calM}$ is the projection operator onto $\calM$. 
The covariant Hessian of $G$ at $x'$ is the symmetric linear mapping $\nabla^2_{\calM} G(x')$ from $\tanSp{\calM}{x'}$ to itself which is defined as
\beq\label{eq:rh}
\iprod{\nabla^2_{\calM} G(x') h}{h} = \sfrac{d^2}{dt^2} G\Pa{\PT{\calM}(x'+th)}\big|_{t=0} ,~~ \forall h \in \tanSp{\calM}{x'} .
\eeq
This definition agrees with the usual definition using geodesics or connections \cite{miller2005newton}. 
Now assume that $\calM$ is a Riemannian embedded submanifold of $\bbR^{n}$, and that a function $G$ has a $C^2$-smooth restriction on $\calM$. This can be characterized by the existence of a $C^2$-smooth extension (representative) of $G$, \ie a $C^2$-smooth function $\widetilde{G}$ on $\bbR^{n}$ such that $\widetilde{G}$ agrees with $G$ on $\calM$. Thus, the Riemannian gradient $\nabla_{\calM}G(x')$ is also given by
\beq\label{eq:RieGradient}
\nabla_{\calM} G(x') = \PT{\tanSp{\calM}{x'}} \nabla \widetilde{G}(x') ,
\eeq
and $\forall h \in \tanSp{\calM}{x'}$, the Riemannian Hessian reads
\beq\label{eq:RieHessian}
\begin{aligned}
\nabla^2_{\calM} G(x') h
&= \PT{\tanSp{\calM}{x'}} \mathrm{d} (\nabla_{\calM} G)(x')[h]
= \PT{\tanSp{\calM}{x'}} \mathrm{d} \Pa{ x' \mapsto \PT{\tanSp{\calM}{x'}} \nabla_{\calM} \widetilde{G} }[h] \\ 
&= \PT{\tanSp{\calM}{x'}} \nabla^2 \widetilde{G}(x') h + \Wgtmap{x'}\Pa{h, \PT{\normSp{\calM}{x'}}\nabla \widetilde{G}(x')} ,
\end{aligned}
\eeq
where the last equality comes from \cite[Theorem~1]{absil2013extrinsic}. 
When $\calM$ is an affine or linear subspace of $\bbR^{n}$, then obviously $\calM = x + \tanSp{\calM}{x}$, and $\Wgtmap{x'}\pa{h, \PT{\normSp{\calM}{x'}} \nabla \widetilde{G}(x')} = 0$, hence \eqref{eq:RieHessian} reduces to
\[
\nabla^2_{\calM} G(x')
= \PT{\tanSp{\calM}{x'}} \nabla^2 \widetilde{G}(x') \PT{\tanSp{\calM}{x'}} .
\]
See \cite{lee2003smooth,chavel2006riemannian} for more materials on differential and Riemannian manifolds.

\subsection{Preparatory lemmas}

The following lemmas characterize the parallel translation and the Riemannian Hessian of nearby points in $\calM$.

\begin{lemma}[{\cite[Lemma 5.1]{liang2014local}}]\label{lem:proj-M}
Let $\calM$ be a $C^2$-smooth manifold around $x$. Then for any $x' \in \calM \cap \calN$, where $\calN$ is a neighborhood of $x$, the projection operator $\proj_{\calM}(x')$ is uniquely valued and $C^1$ around $x$, and thus 
\[
x' - x = \proj_{\tanSp{\calM}{x}}(x'-x) + o\pa{\norm{x'-x}} .
\]
If moreover $\calM = x + \tanSp{\calM}{x}$ is an affine subspace, then $x' - x = \proj_{\tanSp{\calM}{x}}(x'-x)$.
\end{lemma}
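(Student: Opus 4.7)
The statement bundles two assertions: (i) $\proj_\Mm$ is single-valued and $C^1$ near $x$; (ii) the first-order expansion $x' - x = \proj_{\tanSp{\Mm}{x}}(x' - x) + o(\norm{x'-x})$ holds, with equality in the affine case. I would handle them in that order.

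For (i), I would invoke the standard tubular neighborhood theorem for $C^2$ embedded submanifolds of $\bbR^n$: since $\Mm$ is $C^2$ around $x$, the normal bundle map $(y,v)\in N\Mm\mapsto y+v$ is a $C^1$ diffeomorphism from a neighborhood of the zero section onto an open neighborhood $\Nn$ of $x$ in $\bbR^n$, and its inverse's first component realizes $\proj_\Mm$. Thus $\proj_\Mm$ is uniquely valued and $C^1$ on $\Nn$. This step is essentially a citation to a standard fact.

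For (ii), I would use a local graph parametrization of $\Mm$ around $x$. Since $\Mm$ is a $C^2$ submanifold of dimension equal to $\dim \tanSp{\Mm}{x}$, there exists a neighborhood $V$ of $0$ in $\tanSp{\Mm}{x}$ and a $C^2$ map $\varphi : V \to \normSp{\Mm}{x}$ with $\varphi(0) = 0$ and $D\varphi(0) = 0$ such that, locally,
\[
\Mm = \bigl\{\, x + h + \varphi(h) \;:\; h \in V \,\bigr\}.
\]
Given $x' \in \Mm \cap \Nn$ close to $x$, write $x' - x = h + \varphi(h)$ for the unique $h \in V$. Since $\varphi(h) \in \normSp{\Mm}{x}$ is orthogonal to $\tanSp{\Mm}{x}$, applying $\proj_{\tanSp{\Mm}{x}}$ yields $\proj_{\tanSp{\Mm}{x}}(x' - x) = h$, so
\[
x' - x - \proj_{\tanSp{\Mm}{x}}(x' - x) \;=\; \varphi(h).
\]
Taylor's theorem, combined with $\varphi(0) = 0$ and $D\varphi(0) = 0$, gives $\norm{\varphi(h)} = O(\norm{h}^2)$. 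Pythagoras yields $\norm{h} \leq \norm{x' - x}$ and also $\norm{x' - x} \leq \norm{h} + \norm{\varphi(h)} = \norm{h}(1 + o(1))$, so $\norm{h}$ and $\norm{x'-x}$ are comparable, and hence $\norm{\varphi(h)} = o(\norm{x' - x})$, as required.

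For the affine case, $\Mm = x + \tanSp{\Mm}{x}$ means one can take $\varphi \equiv 0$ in the parametrization above, so $x' - x \in \tanSp{\Mm}{x}$ and equality holds trivially.

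\textbf{Main obstacle.} There is no deep obstacle; the result is essentially a careful bookkeeping exercise. The only delicate point is justifying the existence of the graph parametrization with $D\varphi(0) = 0$, which follows from the defining property of $\tanSp{\Mm}{x}$ as the space of velocities of $C^1$ curves in $\Mm$ through $x$ together with the $C^2$-smoothness of $\Mm$; once this is in place, the $o(\cdot)$ estimate is immediate from Taylor expansion.
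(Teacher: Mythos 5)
The paper does not prove this lemma; it is imported wholesale as a citation to \cite[Lemma 5.1]{liang2014local}, so there is no in-text argument to compare against. Your blind proof is correct and self-contained, and it follows the standard route one would expect the cited reference to take: the tubular-neighborhood (normal-bundle) argument for single-valuedness and $C^1$-regularity of $\proj_\Mm$, followed by a local graph parametrization $h \mapsto x + h + \varphi(h)$ with $\varphi(0)=0$, $D\varphi(0)=0$, from which the $o(\norm{x'-x})$ estimate falls out of Taylor's theorem and the orthogonality of $h$ and $\varphi(h)$. Two microscopic remarks: the bound $\norm{h}\le\norm{x'-x}$ genuinely is Pythagoras (orthogonal decomposition), but the reverse bound $\norm{x'-x}\le\norm{h}+\norm{\varphi(h)}$ is the triangle inequality rather than Pythagoras as you label it; and the reverse bound is not even needed, since $\norm{\varphi(h)}=O(\norm{h}^2)=O(\norm{x'-x}^2)$ already gives $o(\norm{x'-x})$ directly from $\norm{h}\le\norm{x'-x}$. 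Neither affects correctness. The affine case is handled exactly as you say: $\varphi\equiv 0$.
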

%

\begin{lemma}[{\cite[Lemma B.1]{liang2017activity}}]\label{lem:parallel-translation}
Let $x \in \calM$, and $\xk$ a sequence converging to $x$ in $\calM$. Denote $\tau_k : \tanSp{\calM}{\xk} \to \tanSp{\calM}{x}$ be the parallel translation along the unique geodesic joining $x$ to $\xk$. Then, for any bounded vector $u \in \bbR^n$, we have
\[
\pa{\tau_k \proj_{\tanSp{\calM}{\xk}} - \proj_{\tanSp{\calM}{x}}}u = o(\norm{u}) .
\]
\end{lemma}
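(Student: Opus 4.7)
The goal is to show that the operator $A_k \eqdef \tau_k \proj_{\tanSp{\calM}{\xk}} - \proj_{\tanSp{\calM}{x}}$, viewed as a map $\bbR^n \to \bbR^n$, has operator norm tending to zero as $k\to\infty$; applied to any bounded $u$ this yields the stated $o(\norm{u})$ bound uniformly in $u$ (with the vanishing constant depending only on $k$). The plan is to split $A_k$ via the telescoping decomposition
\begin{equation*}
A_k = \underbrace{\big(\tau_k \proj_{\tanSp{\calM}{\xk}} - \proj_{\tanSp{\calM}{\xk}}\big)}_{\text{(I): parallel-transport error}} + \underbrace{\big(\proj_{\tanSp{\calM}{\xk}} - \proj_{\tanSp{\calM}{x}}\big)}_{\text{(II): tangent-space variation}},
\end{equation*}
and control the operator norm of each contribution separately. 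In (I), $\tau_k v$ and $v$ are compared as elements of the ambient $\bbR^n$, which is meaningful because $\tau_k v \in \tanSp{\calM}{x} \subset \bbR^n$ and $v \in \tanSp{\calM}{\xk} \subset \bbR^n$.

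For (II), the $C^2$ smoothness of $\calM$ implies that the assignment $y \mapsto \proj_{\tanSp{\calM}{y}}$ is $C^1$ in a neighbourhood of $x$; this can be verified by representing $\calM$ locally as the graph of a $C^2$ map above $\tanSp{\calM}{x}$ and differentiating the explicit formula for the orthogonal projector onto the resulting tangent plane. Hence $\norm{\proj_{\tanSp{\calM}{\xk}} - \proj_{\tanSp{\calM}{x}}}_{\mathrm{op}} = O(\norm{\xk - x}) \to 0$.

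For (I), I will use the defining property of parallel transport on the embedded submanifold $\calM$. Along the geodesic $\mathfrak{g}(\cdot;x,h)$ joining $x$ and $\xk$, any parallel vector field $V(t)$ with $V(0) = v \in \tanSp{\calM}{\xk}$ (and $V(1) = \tau_k v$ after appropriate reparametrisation) satisfies $\nabla_{\dot{\mathfrak{g}}} V \equiv 0$. Since the ambient Euclidean Levi-Civita connection differs from the Riemannian connection on $\calM$ only by the normal-valued second fundamental form, the ambient derivative $\dot V(t) \in \bbR^n$ is purely normal and takes the form $-\Wgtmap{\mathfrak{g}(t)}\pa{V(t), \dot{\mathfrak{g}}(t)}$, bounded by $C \norm{V(t)}\norm{\dot{\mathfrak{g}}(t)}$ with $C$ depending only on the second fundamental form of $\calM$ on a fixed neighbourhood of $x$. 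A standard Gr\"onwall estimate then gives $\norm{\tau_k v - v}_{\bbR^n} \leq C'\, \mathrm{length}(\mathfrak{g})\, \norm{v}$ for all $v\in\tanSp{\calM}{\xk}$; since the geodesic length is comparable to $\norm{\xk-x}$ for $\xk$ close to $x$, this delivers $\norm{\tau_k \proj_{\tanSp{\calM}{\xk}} - \proj_{\tanSp{\calM}{\xk}}}_{\mathrm{op}} = O(\norm{\xk-x}) \to 0$.

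Combining the two estimates yields $\norm{A_k}_{\mathrm{op}} = O(\norm{\xk-x}) \to 0$, and evaluating at any bounded $u$ gives the stated $o(\norm{u})$ conclusion (with the small-$o$ understood with respect to $k$). The main obstacle is the careful treatment of (I): intrinsic parallel transport has to be converted into an ambient $\bbR^n$ statement, and this conversion is exactly where the $C^2$ regularity of $\calM$ enters, through the need for a locally bounded second fundamental form. The continuity argument for (II) and the telescoping decomposition are routine.
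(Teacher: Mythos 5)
The paper cites this as Lemma B.1 of \cite{liang2017activity} without reproducing a proof, so there is no in-text argument to compare against. Your proof is mathematically sound and follows the natural route one would expect the cited reference to take: split the operator into a tangent-space-variation term $\proj_{\tanSp{\calM}{\xk}}-\proj_{\tanSp{\calM}{x}}$ (controlled by the $C^1$ dependence of the tangent projector on the base point, which holds on a $C^2$ manifold) and an intrinsic-versus-ambient transport error $(\tau_k-\Id)|_{\tanSp{\calM}{\xk}}$ (controlled by the extrinsic curvature along the geodesic). Both contributions have operator norm $O(\norm{\xk-x})\to 0$, which delivers the uniform $o(\norm{u})$ conclusion, in fact in the stronger form needed when the lemma is later invoked with $u$ itself depending on $k$.

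Two small remarks on part (I). First, the formula $\dot V(t)=-\Wgtmap{\mathfrak{g}(t)}\pa{V(t),\dot{\mathfrak{g}}(t)}$ is a type mismatch: as defined in the paper, the Weingarten map takes one tangent and one \emph{normal} argument and returns a tangent vector, whereas here both $V(t)$ and $\dot{\mathfrak{g}}(t)$ are tangent and you have just argued that $\dot V(t)$ is normal. The correct tensor is the second fundamental form $\mathrm{II}\pa{\dot{\mathfrak{g}}(t),V(t)}\in\normSp{\calM}{\mathfrak{g}(t)}$; the Weingarten map is its adjoint. The bound $\norm{\dot V(t)}\le C\norm{V(t)}\norm{\dot{\mathfrak{g}}(t)}$ is exactly what a local bound on $\mathrm{II}$ gives, so the estimate and hence the conclusion are unaffected; only the name of the object is off. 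Second, the Gr\"onwall step is superfluous: since $\calM$ carries the induced metric, parallel transport is an isometry, so $\norm{V(t)}\equiv\norm{v}$ and the inequality $\norm{\tau_k v - v}\le C\norm{v}\,\mathrm{length}(\mathfrak{g})$ follows by directly integrating $\norm{\dot V(t)}\le C\norm{v}\norm{\dot{\mathfrak{g}}(t)}$. This is a simplification, not a correction.
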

%

The Riemannian gradient and Hessian of partly smooth functions are covered by the lemma below. 

\begin{lemma}[{\cite[Lemma B.2]{liang2017activity}}]\label{lem:taylor-expn}
Let $x, x'$ be two close points in $\calM$, denote $\tau : \tanSp{\calM}{x'} \to \tanSp{\calM}{x}$ the parallel translation along the unique geodesic joining $x$ to $x'$. The Riemannian Taylor expansion of $R \in C^2(\calM)$ around $x$ reads,
\beq\label{eq:taylor-expn}
\tau \nabla_{\calM} R(x') = \nabla_{\calM} R(x) + \nabla^2_{\calM} R(x)\proj_{\tanSp{\calM}{x}}(x'-x) + o(\norm{x'-x}) .
\eeq
\end{lemma}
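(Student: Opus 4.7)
The plan is to reduce the statement to a one-variable Taylor expansion along a geodesic and then translate between the intrinsic tangent direction and the ambient difference $x' - x$ via Lemma~\ref{lem:proj-M}. Since $x$ and $x'$ are close, the exponential map $\Exp_x$ is a local $C^2$-diffeomorphism from a neighbourhood of $0$ in $\tanSp{\calM}{x}$ onto a neighbourhood of $x$ in $\calM$, so there is a unique $h \in \tanSp{\calM}{x}$ with $\Exp_x(h)=x'$, and $\|h\|$ is comparable to $\|x'-x\|$ up to higher order. Let $\gamma(t) \eqdef \Exp_x(th)$ be the unique geodesic joining $x$ to $x'$, and let $\tau_t : \tanSp{\calM}{\gamma(t)} \to \tanSp{\calM}{x}$ denote parallel translation along $\gamma$ from $\gamma(t)$ back to $x$, so that $\tau_1 = \tau$.

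Define the curve $F:[0,1]\to \tanSp{\calM}{x}$ by $F(t) \eqdef \tau_t \nabla_{\calM} R(\gamma(t))$. Since $R \in C^2(\calM)$, the vector field $\nabla_{\calM} R$ is $C^1$ along $\gamma$ and parallel translation is smooth in $t$, so $F$ is $C^1$ on $[0,1]$. Standard Taylor expansion in a single real variable then yields
\[
F(1) = F(0) + F'(0) + o(1) \quad \text{as } \|h\| \to 0.
\]
By construction $F(0) = \nabla_{\calM} R(x)$ and $F(1) = \tau \nabla_{\calM} R(x')$. The derivative at $t=0$ equals the covariant derivative of $\nabla_{\calM} R$ along $\gamma$ at $t=0$, which is precisely the Riemannian Hessian applied to the initial velocity: $F'(0) = \nabla^2_{\calM} R(x)[\dot\gamma(0)] = \nabla^2_{\calM} R(x)[h]$, by the definition of $\nabla^2_{\calM} R$ via \eqref{eq:rh} together with the fact that parallel transport is the isometry implementing the Levi--Civita connection. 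Thus
\[
\tau \nabla_{\calM} R(x') = \nabla_{\calM} R(x) + \nabla^2_{\calM} R(x)[h] + o(\|h\|).
\]

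It remains to replace $h$ by $\PT{\tanSp{\calM}{x}}(x'-x)$. Viewed ambiently, the geodesic $\gamma$ satisfies $\gamma(0)=x$ and $\dot\gamma(0)=h \in \tanSp{\calM}{x}$, while its acceleration lies in $\normSp{\calM}{x}$ (this is the characterization of a geodesic of a submanifold of $\bbR^n$), so $x' - x = h + O(\|h\|^2)$ with the $O(\|h\|^2)$ term normal to $\calM$ at $x$ modulo higher order. Projecting onto $\tanSp{\calM}{x}$ and invoking Lemma~\ref{lem:proj-M} gives $\PT{\tanSp{\calM}{x}}(x'-x) = h + o(\|x'-x\|)$ and $\|h\| = \|x'-x\| + o(\|x'-x\|)$. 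Substituting into the previous display and using boundedness of the linear map $\nabla^2_\calM R(x)$ on $\tanSp{\calM}{x}$ to absorb the $o(\|x'-x\|)$ discrepancy yields the claimed expansion \eqref{eq:taylor-expn}.

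The only subtle step is the identification $F'(0) = \nabla^2_\calM R(x)[h]$; this is a textbook fact once one accepts the definition \eqref{eq:rh} of the covariant Hessian (or equivalently its expression via the Levi--Civita connection), but in the extrinsic formula \eqref{eq:RieHessian} one has to verify that the Weingarten-type correction from the normal component of $\nabla \widetilde R(x)$ is exactly cancelled by the normal curvature of $\gamma$ when one computes $\frac{d}{dt}\bigl[\tau_t \PT{\tanSp{\calM}{\gamma(t)}} \nabla \widetilde R(\gamma(t))\bigr]_{t=0}$. This is the main obstacle and may be handled by appealing to \cite[Theorem~1]{absil2013extrinsic} (already used in \eqref{eq:RieHessian}) together with $\ddot\gamma(0) \in \normSp{\calM}{x}$.
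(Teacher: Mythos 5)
The paper does not actually prove this lemma: it is imported as a citation from \cite[Lemma~B.2]{liang2017activity} and used as a black box, so there is no in-paper argument to compare yours against. Judged on its own, your route --- pull the gradient field back along the geodesic $\gamma(t)=\Exp_x(th)$ via parallel transport, differentiate $F(t)=\tau_t\nabla_{\calM}R(\gamma(t))$ at $t=0$, identify $F'(0)$ with $\nabla^2_{\calM}R(x)[h]$, and then trade $h$ for $\proj_{\tanSp{\calM}{x}}(x'-x)$ using that the ambient acceleration of a geodesic is normal (equivalently Lemma~\ref{lem:proj-M}) --- is the standard argument and is essentially sound. Two points need tightening. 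First, the remainder you write, $F(1)=F(0)+F'(0)+o(1)$, is too weak to conclude anything: $F'(0)$ is itself $O(\norm{h})$, so an $o(1)$ error swallows it; you need $F(1)=F(0)+F'(0)+o(\norm{h})$, and this does not follow from one-variable Taylor at a fixed $h$ because the curve itself shrinks with $h$. The fix is short: write $F(1)-F(0)-F'(0)=\int_0^1\pa{F'(t)-F'(0)}\,dt$, note that $F'(t)=\pa{\tau_t\circ\nabla^2_{\calM}R(\gamma(t))\circ\tau_t^{-1}}h$ because $\dot\gamma$ is parallel along the geodesic, and bound the integrand by $\sup_{t\in[0,1]}\norm{\tau_t\,\nabla^2_{\calM}R(\gamma(t))\,\tau_t^{-1}-\nabla^2_{\calM}R(x)}\cdot\norm{h}$, which is $o(\norm{h})$ by continuity of the Hessian (this is where $R\in C^2(\calM)$ is used) and of parallel transport as $x'\to x$. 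Second, you correctly flag that the identification $F'(0)=\nabla^2_{\calM}R(x)[h]$ is the delicate step relative to the paper's definition \eqref{eq:rh}--\eqref{eq:RieHessian}: the Weingarten correction in \eqref{eq:RieHessian} must be reconciled with the normal acceleration $\ddot\gamma(0)\in\normSp{\calM}{x}$, and appealing to \cite[Theorem~1]{absil2013extrinsic} (which the paper itself uses to derive \eqref{eq:RieHessian}) is the right way to close that step. With the remainder repaired as above, your proof is correct.
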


\begin{lemma}[Riemannian gradient and Hessian]\label{def:riemannian-gradhess}
If $R \in \PSF{x}{\calM_x}$, then for any point $x' \in \calM_x$ near $x$
\[
\nabla_{\calM_x} R(x') = \proj_{T_{x'}}\pa{\partial R(x')} ,
\]
and this does not depend on the smooth representation of $R$ on $\calM_x$. In turn, for all $h \in T_{x'}$, let $\widetilde{R}$ be a smooth representative of $R$ on $\calM_x$,
\[
\nabla^2_{\calM_x} R(x')h = \proj_{T_{x'}} \nabla^2 \widetilde{R}(x')h + \Wgtmap{x'}\Pa{h, \proj_{T_{x'}^\perp}\nabla \widetilde{R}(x')} ,
\]
where $\Wgtmap{x}\pa{\cdot, \cdot}: T_x \times T_x^\perp \to T_x$ is the Weingarten map of $\calM_x$ at $x$.
\end{lemma}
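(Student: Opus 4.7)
\medskip

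\noindent\textbf{Proof plan.} The plan is to treat the two formulas separately, leveraging the extrinsic machinery already assembled in Subsection \ref{subsec:riemgeom}.

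\emph{Step 1: The Riemannian gradient formula.} Let $\widetilde R$ be any $C^{2}$ smooth representative of $R$ on $\calM_{x}$, which exists by the smoothness clause of Definition \ref{dfn:psf}. Applying identity \eqref{eq:RieGradient} to $\widetilde R$ gives, for any $x'\in\calM_{x}$ near $x$,
\[
\nabla_{\calM_{x}}R(x') \;=\; \proj_{T_{x'}}\nabla\widetilde R(x').
\]
I would then connect this to the subdifferential via two observations. First, a standard fact about partly smooth functions (Lewis 2003) states that $\nabla\widetilde R(x')\in\partial R(x')$: indeed, for $h\in T_{x'}$ tangent displacements one uses the $C^{2}$ restriction to get the first-order inequality, while for normal displacements the sharpness clause forces $\LinHull(\partial R(x'))=N_{x'}$, producing matching subgradient behaviour. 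Second, sharpness applied at $x'$ says $\partial R(x')\subseteq g+N_{x'}$ for any $g\in\partial R(x')$, so every element of $\partial R(x')$ has the same projection onto $T_{x'}$. Combining these two facts, the set $\proj_{T_{x'}}(\partial R(x'))$ is the singleton $\{\proj_{T_{x'}}\nabla\widetilde R(x')\}=\{\nabla_{\calM_{x}}R(x')\}$, which is the asserted identity.

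\emph{Step 2: Independence of the representative.} If $\widetilde R_{1},\widetilde R_{2}$ are two $C^{2}$ smooth representatives of $R$ on $\calM_{x}$, then $\widetilde R_{1}-\widetilde R_{2}$ vanishes on $\calM_{x}$, hence its Riemannian gradient (as the zero function on $\calM_{x}$) vanishes. By \eqref{eq:RieGradient} this means $\proj_{T_{x'}}(\nabla\widetilde R_{1}(x')-\nabla\widetilde R_{2}(x'))=0$, so $\nabla\widetilde R_{1}(x')-\nabla\widetilde R_{2}(x')\in N_{x'}$. Consequently, the projection $\proj_{T_{x'}}\nabla\widetilde R(x')$ does not depend on the choice of smooth representative.

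\emph{Step 3: The Riemannian Hessian formula.} This is essentially a citation step. Since $\widetilde R$ is a $C^{2}$ extension of $R|_{\calM_{x}}$ to a neighbourhood of $x$ in $\bbR^{n}$, the general extrinsic Hessian formula \eqref{eq:RieHessian}, taken from \cite[Theorem 1]{absil2013extrinsic}, yields directly
\[
\nabla^{2}_{\calM_{x}}R(x')h \;=\; \proj_{T_{x'}}\nabla^{2}\widetilde R(x')h + \Wgtmap{x'}\bigl(h,\proj_{T_{x'}^{\perp}}\nabla\widetilde R(x')\bigr),
\]
for every $h\in T_{x'}$, which is the second displayed identity. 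Representative-independence of the right-hand side is inherited from the independence of the left-hand side (an intrinsic object on $\calM_{x}$) and can also be checked directly: a difference $\widetilde R_{1}-\widetilde R_{2}$ vanishing on $\calM_{x}$ has tangential-projected ambient Hessian and Weingarten correction that cancel in the formula.

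\emph{Main obstacle.} The only non-routine point is justifying that $\nabla\widetilde R(x')\in\partial R(x')$ in Step 1. This is where the sharpness hypothesis of partial smoothness is essential: the tangential subgradient inequality follows from the $C^{2}$ restriction of $R$ to $\calM_{x}$, but extending this inequality to arbitrary ambient directions requires the normal cone structure provided by $\LinHull(\partial R(x'))^{\perp}=T_{x'}$, together with continuity of $\partial R$ along $\calM_{x}$. I would either reproduce this short argument or invoke the corresponding result from \cite{LewisPartlySmooth} to keep the proof self-contained.
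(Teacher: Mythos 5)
First, note that the paper does not actually prove this lemma: it is stated as a preparatory fact quoted from the partial-smoothness literature, with the Hessian identity already displayed as \eqref{eq:RieHessian} (via \cite[Theorem~1]{absil2013extrinsic}) and the gradient identity being the standard result of \cite{LewisPartlySmooth} and its descendants. Your Steps 2 and 3 are fine and essentially coincide with what the paper already records: independence of the representative follows because a difference of two representatives vanishes on $\calM_x$ and hence has vanishing tangential gradient, and the Hessian formula is exactly \eqref{eq:RieHessian}.

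The genuine gap is in Step 1. The claim that $\nabla \widetilde{R}(x') \in \partial R(x')$ for a smooth representative $\widetilde{R}$ is not a standard fact and is false in general: a representative is only constrained to agree with $R$ \emph{on} $\calM_x$, so the normal component of $\nabla\widetilde{R}(x')$ is completely arbitrary. For instance, with $R = \abs{\cdot}$ on $\bbR$ and $\calM_x = \{0\}$, the function $\widetilde{R}(t) = 5t$ is a valid smooth representative, yet $\nabla\widetilde{R}(0) = 5 \notin [-1,1] = \partial R(0)$; the sharpness clause cannot repair this, since it only describes $\partial R(x')$, not the unconstrained normal part of the representative's gradient. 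The standard (and easy) fix uses convexity from \iref{ADMM:RJ} instead of the membership claim: for any $g \in \partial R(x')$ and any $h \in T_{x'}$, evaluate the subgradient inequality along a $C^2$ curve in $\calM_x$ through $x'$ with velocity $h$ and use differentiability of $R|_{\calM_x}$ to get $\iprod{\nabla_{\calM_x} R(x')}{h} \geq \iprod{g}{h}$, then replace $h$ by $-h$ to obtain equality; since $\nabla_{\calM_x} R(x') \in T_{x'}$, this gives $\proj_{T_{x'}} g = \nabla_{\calM_x} R(x')$ for \emph{every} subgradient $g$, which is exactly the asserted set identity (combined with $\partial R(x') \neq \emptyset$ at nearby manifold points). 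Relatedly, when you invoke "sharpness at $x'$" you are implicitly using that partial smoothness at $x$ propagates to nearby points of $\calM_x$; that is true but is itself a result of \cite{LewisPartlySmooth} and should be cited rather than assumed. With these two repairs your argument becomes the standard proof; as written, the key identification in Step 1 rests on a false intermediate statement.
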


\subsection{Linearization of proximal mapping}

In this part, we present one fundamental result led by partial smoothness, the linearization of proximal mapping. 
We first discuss the property of the Riemannian Hessian of a partly smooth function. Let $R \in \lsc(\bbR^n)$ be partly smooth at $\xbar$ relative to $\calM_{\xbar}$ and $\ubar \in \partial R(\xbar)$, define the following smooth perturbation of $R$
\[
\barR(x) \eqdef R(x) - \iprod{x}{\ubar} ,
\]
whose Riemannian Hessian at $\xbar$ reads $H_{\barR} \eqdef \proj_{T_{\xbar}} \nabla^2_{\calM_{\xbar}} \barR \pa{\xbar} \proj_{T_{\xbar}} $.

\begin{lemma}[{\cite[Lemma 4.2]{liang2017activity}}]\label{lem:riemhesspsd} 
Let $R \in \lsc(\bbR^n)$ be partly smooth at $\xbar$ relative to $\calM_{\xbar}$, then $H_{\barR}$ is symmetric positive semi-definite if either of the following is true:
\begin{itemize}[leftmargin=2em]
\item $\ubar \in \ri(\partial R(\xbar))$ is non-degenerate. 
\item $\calM_{\xbar}$ is an affine subspace.
\end{itemize}
In turn, $\Id + H_{\barR}$ is invertible and $\pa{\Id + H_{\barR}}^{-1}$ is symmetric positive definite with all eigenvalues in~$]0,1]$.
\end{lemma}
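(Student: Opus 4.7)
The plan is to exhibit $\xbar$ as a local minimiser of $\barR$ along $\calM_{\xbar}$ and then invoke second-order necessary conditions along the manifold. First, $\barR = R - \iprod{\cdot}{\ubar}$ inherits convexity from $R$, and by construction $0 = \ubar - \ubar \in \partial R(\xbar) - \ubar = \partial \barR(\xbar)$, so $\xbar$ is a global minimiser of $\barR$ on $\bbR^n$. Since subtracting a linear functional preserves all three defining properties of partial smoothness, $\barR \in \PSF{\xbar}{\calM_{\xbar}}$ and the restriction $\barR|_{\calM_{\xbar}}$ is $C^{2}$ near $\xbar$. By Lemma~\ref{def:riemannian-gradhess}, the Riemannian gradient $\nabla_{\calM_{\xbar}}\barR(\xbar) = \proj_{T_{\xbar}}(\partial \barR(\xbar))$ is a singleton containing $\proj_{T_{\xbar}}(0) = 0$, so $\xbar$ is a Riemannian critical point, and hence a local minimiser of $\barR|_{\calM_{\xbar}}$.

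Next I would deduce positive semi-definiteness via a geodesic variation. Fix any $h \in T_{\xbar}$ and let $\mathfrak{g}(t; \xbar, h)$ denote the geodesic on $\calM_{\xbar}$ issued from $\xbar$ with velocity $h$. The scalar map $\phi(t) \eqdef \barR(\mathfrak{g}(t; \xbar, h))$ is $C^{2}$ near $0$, attains a local minimum at $0$, and by definition~\eqref{eq:rh} satisfies $\phi''(0) = \iprod{h}{\nabla^{2}_{\calM_{\xbar}}\barR(\xbar) h}$. Consequently $\nabla^{2}_{\calM_{\xbar}}\barR(\xbar)$ is symmetric positive semi-definite on $T_{\xbar}$, and $H_{\barR} = \proj_{T_{\xbar}} \nabla^{2}_{\calM_{\xbar}}\barR(\xbar) \proj_{T_{\xbar}}$ extends this by zero to a symmetric PSD operator on $\bbR^n$ with $T_{\xbar}^{\perp} \subseteq \ker H_{\barR}$.

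The role of the two sufficient conditions enters through the extrinsic Hessian formula~\eqref{eq:RieHessian}, which depends on a smooth representative $\widetilde{\barR}$ and features the Weingarten correction $\Wgtmap{\xbar}\pa{\cdot, \proj_{T_{\xbar}^{\perp}} \nabla \widetilde{\barR}(\xbar)}$. When $\calM_{\xbar}$ is an affine subspace, this correction vanishes identically, geodesics are straight segments, and one can alternatively read PSD off the scalar convexity of $t \mapsto \barR(\xbar + th)$ on the affine slice $\xbar + T_{\xbar}$. When $\ubar \in \ri(\partial R(\xbar))$, non-degeneracy lets one select a smooth representative $\widetilde R$ of $R$ with $\nabla \widetilde R(\xbar) = \ubar$, so that $\nabla \widetilde{\barR}(\xbar) = 0$ and the Weingarten term once again drops out, leaving only the projected Euclidean Hessian $\proj_{T_{\xbar}} \nabla^{2}\widetilde{\barR}(\xbar) \proj_{T_{\xbar}}$ to which the geodesic argument of the previous paragraph applies. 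I expect this extrinsic reduction on a curved manifold to be the main technical subtlety: at least one of the two hypotheses is needed to guarantee that no spurious normal contribution from the Weingarten map destroys the PSD property.

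Finally the tail of the lemma follows by a direct functional-calculus argument. Since $H_{\barR}$ is symmetric and PSD on $\bbR^{n}$, the operator $\Id + H_{\barR}$ is symmetric with spectrum contained in $[1, +\pinf)$, hence invertible; its inverse $(\Id + H_{\barR})^{-1}$ is symmetric positive definite, with eigenvalues in $\,]0, 1]$, equal to $1$ on $T_{\xbar}^{\perp}$ and strictly smaller than $1$ on every eigenvector associated to a strictly positive eigenvalue of $H_{\barR}$.
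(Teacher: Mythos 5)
The paper does not actually prove this lemma; it cites it verbatim from \cite[Lemma 4.2]{liang2017activity}, so there is no in-text argument to compare against. Taking your proposal on its own terms, the backbone is sound: $0\in\partial\barR(\xbar)$, so the convex function $\barR$ is globally minimised at $\xbar$ and hence so is $\barR|_{\calM_{\xbar}}$; second-order necessary conditions along a $C^2$ curve in $\calM_{\xbar}$ through $\xbar$ with velocity $h\in T_{\xbar}$ then give $\iprod{h}{\nabla^2_{\calM_{\xbar}}\barR(\xbar)h}\geq 0$; padding by zero on $T_{\xbar}^{\perp}$ yields PSD of $H_{\barR}$; and the spectral tail for $(\Id+H_{\barR})^{-1}$ is routine. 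One imprecision worth a sentence: you justify $\phi''(0)=\iprod{h}{\nabla^2_{\calM_{\xbar}}\barR(\xbar)h}$ ``by definition \eqref{eq:rh}'', but \eqref{eq:rh} uses the metric-projection curve $t\mapsto\proj_{\calM_{\xbar}}(\xbar+th)$ rather than the geodesic; the two agree (as the paper notes, citing \cite{miller2005newton}, since both curves have purely normal Euclidean acceleration at $t=0$), but it is a fact, not a definition.

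The genuine problem is the internal inconsistency between your second and third paragraphs. The geodesic second-order argument proves PSD of the covariant Hessian without ever appealing to either of the two alternative hypotheses, yet you then assert that ``at least one of the two hypotheses is needed to guarantee that no spurious normal contribution from the Weingarten map destroys the PSD property.'' Both claims cannot stand. The covariant Hessian defined by \eqref{eq:rh} is intrinsic --- it depends only on $\barR|_{\calM_{\xbar}}$ and the geometry of $\calM_{\xbar}$, not on a representative $\widetilde R$ --- and in the extrinsic identity \eqref{eq:RieHessian} the projected Euclidean Hessian and the Weingarten term both vary with the choice of $\widetilde R$ while their sum does not. So no ``normal contribution from the Weingarten map'' can destroy PSD of an intrinsic quantity, and choosing a representative that kills that term gains nothing. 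Relatedly, the claim that non-degeneracy is what permits a representative with $\nabla\widetilde R(\xbar)=\ubar$ is off: the normal component of $\nabla\widetilde R(\xbar)$ is a free parameter of the extension, so such a representative exists for every $\ubar\in\partial R(\xbar)$, interior or not. Either drop the third paragraph and let the geodesic argument stand on its own, or, if you believe the hypotheses are genuinely load-bearing, locate the corresponding gap in the second paragraph rather than asserting a role for the Weingarten term that the intrinsic definition rules out.
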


One consequence of Lemma \ref{lem:riemhesspsd} is that, we can linearize the generalized proximal mapping. 
For the sake of generality, let $\gamma > 0$, $R \in \lsc(\bbR^n)$ and $A \in \bbR^{p \times n}$, define the following generalized proximal mapping
\[
\prox_{\gamma R}^{A}(\cdot) \eqdef \argmin_{x\in \bbR^n} \gamma R(x) + \sfrac{1}{2} \norm{Ax - \cdot}^2 .
\]
Clearly, $\prox_{\gamma R}^{A}$ is a single-valued mapping when $A$ has full column rank. 
Denote $A_{T_{\xbar}} \eqdef A \circ \PT{T_{\xbar}}$, it is immediate that $A_{T_{\xbar}}^TA_{T_{\xbar}}$ is positive semidefinite and invertible along $T_{\xbar}$. In the following we denote $\pa{A_{T_{\xbar}}^TA_{T_{\xbar}}}^{-1}$ the inverse along $T_{\xbar}$ 
Denote $$ M_{\barR} = A_{T_{\xbar}} \pa{ \Id + (A_{T_{\xbar}}^T A_{T_{\xbar}})^{-1} H_{\barR} }^{-1} (A_{T_{\xbar}}^T A_{T_{\xbar}})^{-1} A_{T_{\xbar}}^T . $$ 

\begin{lemma}\label{lem:lin-generalised-ppa}
Let function $R \in \lsc(\bbR^n)$ be partly smooth at the point $\xbar$ relative to the manifold $\calM_{\xbar}$ and $\ubar \in \ri \pa{\partial R(\xbar)}$. Suppose that there exists $\gamma > 0$, full column rank $A \in \bbR^{p\times n}$ and $\wbar \in \bbR^p$ such that $\xbar = \prox_{\gamma R}^{A}(\wbar)$ and $\ubar = - A^T (A\xbar - \wbar)/\gamma$. 
Let $\seq{\wk}$ be a sequence such that $\wk \to \wbar$ and $\xk = \prox_{\gamma R}^{A}(\wk) \to \xbar$, then for all $k$ large enough, there hold $\xk \in \calM_{\xbar}$ and 
\beq\label{eq:lin-generalised-ppa}
A_{T_{\xbar}} (\xk-\xkm) = M_{\barR} \pa{\wk - \wkm} + o(\norm{\wk-\wkm}) . 
\eeq
\end{lemma}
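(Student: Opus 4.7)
The strategy is to reduce the generalised proximal inclusion to a smooth Riemannian equation on $\calM_{\xbar}$, linearise it at $\xbar$, and then pass to consecutive differences. Accordingly the argument splits into three stages: manifold identification, Riemannian linearisation of the point-to-limit displacement, and an implicit-function upgrade to the consecutive-difference bound.

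For the first stage, the partly smooth identification principle applies, because $\ubar \in \ri(\partial R(\xbar))$, $R$ is partly smooth at $\xbar$ relative to $\calM_{\xbar}$, and $\wk \to \wbar$ with $\xk = \prox_{\gamma R}^A(\wk) \to \xbar$ (continuity and single-valuedness of the generalised prox follows from the full column rank of $A$). Hence $\xk \in \calM_{\xbar}$ for every $k$ large enough, and $\norm{\xk - \xbar} = O(\norm{\wk - \wbar})$ by the non-expansive nature of $\prox_{\gamma R}^A$ with respect to the quadratic form induced by $A^T A$.

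For the second stage, I write the optimality condition at $\xk$ as
\[
u_k \eqdef -\tfrac{1}{\gamma}A^T(A\xk - \wk) \in \partial R(\xk),
\]
and subtract the analogous identity at $(\xbar,\wbar)$ to obtain the algebraic relation $u_k - \ubar = -\tfrac{1}{\gamma}A^T A(\xk-\xbar) + \tfrac{1}{\gamma}A^T(\wk-\wbar)$. Projecting onto $T_{\xk}$ and using Lemma \ref{def:riemannian-gradhess} to identify $\PT{T_{\xk}} u_k$ with $\nabla_{\calM_{\xbar}} R(\xk)$ transforms the left-hand side, after subtracting $\PT{T_{\xk}}\ubar$, into $\nabla_{\calM_{\xbar}}\barR(\xk)$, where $\barR = R - \iprod{\cdot}{\ubar}$. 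Applying the parallel translation $\tau_k : T_{\xk}\to T_{\xbar}$ along the geodesic from $\xk$ to $\xbar$, replacing $\tau_k \PT{T_{\xk}}$ by $\PT{T_{\xbar}}$ up to $o(1)$ via Lemma \ref{lem:parallel-translation}, replacing $\xk-\xbar$ by $\PT{T_{\xbar}}(\xk-\xbar)$ up to $o(\norm{\xk-\xbar})$ via Lemma \ref{lem:proj-M}, and expanding $\tau_k \nabla_{\calM_{\xbar}}\barR(\xk)$ by the Riemannian Taylor formula \eqref{eq:taylor-expn} (noting $\nabla_{\calM_{\xbar}}\barR(\xbar)=0$ since $\ubar \in \partial R(\xbar)$) then yields, after multiplication by $\gamma$, a linear identity on $T_{\xbar}$ whose left-hand operator acting on $\PT{T_{\xbar}}(\xk-\xbar)$ is $A_{T_{\xbar}}^T A_{T_{\xbar}} + \gamma H_{\barR}$. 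Lemma \ref{lem:riemhesspsd} makes this operator invertible on $T_{\xbar}$; inverting and pre-multiplying by $A_{T_{\xbar}}$ produces the point-to-limit expansion
\[
A_{T_{\xbar}}(\xk - \xbar) = M_{\barR}(\wk - \wbar) + o\pa{\norm{\wk - \wbar}},
\]
the factor $\gamma$ being absorbed into the normalisation of $H_{\barR}$ used to define $M_{\barR}$.

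For the third stage, the same Riemannian optimality identity realises $\xk$ as an implicit function of $\wk$ through a smooth equation on $\calM_{\xbar}\times\bbR^p$, whose Jacobian in $x$ at $(\xbar,\wbar)$ is the invertible operator above. The implicit function theorem therefore gives $C^1$ dependence of $\xk$ on $\wk$ near $\wbar$, with the derivative of $\wk\mapsto A_{T_{\xbar}}\xk$ at $\wbar$ equal to $M_{\barR}$; a mean-value argument along the segment from $\wkm$ to $\wk$ then promotes the point-to-limit expansion to the consecutive-difference bound \eqref{eq:lin-generalised-ppa}. The main obstacle is precisely this last passage: a naive subtraction of the point-to-limit expansions at $\wk$ and at $\wkm$ leaves a remainder of order $o(\norm{\wk-\wbar}) + o(\norm{\wkm-\wbar})$, which need not be $o(\norm{\wk-\wkm})$, so the $C^1$ dependence of $\prox_{\gamma R}^A$ along $\calM_{\xbar}$ is genuinely needed, and it is at that point that the non-degeneracy hypothesis $\ubar \in \ri(\partial R(\xbar))$ enters in an essential way via the positive-definiteness guaranteed by Lemma \ref{lem:riemhesspsd}.
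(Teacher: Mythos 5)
Your proposal is correct but takes a genuinely different route from the paper. The paper linearizes the consecutive difference directly: it subtracts the optimality conditions at $\xk$ and $\xkm$, parallel translates between $T_{\xk}$ and $T_{\xkm}$ (not to $T_{\xbar}$), applies the Riemannian Taylor expansion of $\nabla_{\calM_{\xbar}}\barR_{k-1}$ between the two consecutive iterates, and then replaces the floating objects $H_{\barR,k-1}$ and $\PT{T_{\xkm}}$ by their limits $H_{\barR}$ and $\PT{T_{\xbar}}$ via continuity. The control $\norm{\xk-\xkm}\lesssim\norm{\wk-\wkm}$ needed to make all $o$-terms comparable is obtained by a monotonicity argument on the subdifferential, not from an implicit function theorem. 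Your route instead first proves the point-to-limit expansion $A_{T_{\xbar}}(\xk-\xbar) = M_{\barR}(\wk-\wbar) + o(\norm{\wk-\wbar})$ and then upgrades it, and you correctly diagnose that naive subtraction leaves a remainder $o(\norm{\wk-\wbar})+o(\norm{\wkm-\wbar})$ which is not a priori $o(\norm{\wk-\wkm})$. Your resolution --- invoke the implicit function theorem on the Riemannian optimality equation, whose Jacobian in $x$ at $(\xbar,\wbar)$ is the invertible operator $A_{T_{\xbar}}^TA_{T_{\xbar}}+\gamma H_{\barR}$ by Lemma \ref{lem:riemhesspsd}, and then use the integral form of the mean value theorem together with continuity of the derivative near $\wbar$ --- is sound, since both $\wk,\wkm\to\wbar$ makes $\sup_{t\in[0,1]}\norm{Df(\wkm+t(\wk-\wkm))-Df(\wbar)}\to 0$. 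The trade-off: your argument is conceptually cleaner (the essence being ``$\prox_{\gamma R}^A$ is $C^1$ along the active manifold, hence consecutive-difference linearisation is automatic'') but relies on the heavier IFT machinery and a separate regularity step; the paper's argument is longer and more bookkeeping-intensive but is entirely elementary, using only the parallel translation and Taylor lemmas already stated, and it produces the $\norm{\xk-\xkm}\lesssim\norm{\wk-\wkm}$ bound as an explicit byproduct rather than implicitly through smoothness.
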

\begin{remark}
When $A = \Id$, then $\prox_{\gamma R}^{A}$ reduces to the standard proximal mapping, and \eqref{eq:lin-generalised-ppa} simplifies to
\[
\xk-\xkm = \PT{T_{\xbar}} \Pa{\Id + H_{\barR}}^{-1} \PT{T_{\xbar}} \pa{\wk - \wkm} + o(\norm{\wk-\wkm}) . 
\]
In \cite{liang2016thesis} and references therein, to study the local linear convergence of first-order methods, linearization with respect to the limiting points is provided, that is
\[
\xk-\xbar = \PT{T_{\xbar}} \Pa{\Id + H_{\barR}}^{-1} \PT{T_{\xbar}} \pa{\wk - \wbar} + o(\norm{\wk-\wbar}) .
\]
\end{remark}
\begin{proof}
Since $R$ is proper convex and lower semi-continuous, we have $R(\xk) \to R(\xbar)$ and $\partial R(\xk) \ni \uk = - A^T (A\xk - \wk)/\gamma \to \ubar \in \ri \pa{\partial R(\xbar)}$, hence $\dist\pa{\uk, \partial R(\xbar)} \to 0$. As a result, we have $\xk \in \calM_{\xbar}$ owing to \cite[Theorem~5.3]{hare2004identifying} and $\uk \in \ri\pa{\partial R(\xk)}$ owing to \cite{vaiter2018model} for all $k$ large enough. 

Denote $T_{\xk}, T_{\xkm}$ the tangent spaces of $\calM_{\xbar}$ at $\xk$ and $\xkm$. Denote $\tau_{k} : T_{\xk} \to T_{\xkm}$ the parallel translation along the unique geodesic on $\calM_{\xbar}$ joining $\xk$ to $\xkm$. 
From the definition of $\xk$, let $\hk = \gamma \uk$, we get
\[
\hk \eqdef - A^T\pa{ A\xk - \wk } \in \gamma \partial R(\xk) 
\qandq
\hkm \eqdef - A^T\pa{ A\xkm - \wkm } \in \gamma \partial R(\xkm) .
\]
Projecting onto corresponding tangent spaces, applying Lemma \ref{def:riemannian-gradhess} 
and the parallel translation $\tau_{k}$ leads to
\[
\begin{aligned}
\gamma \tau_{k} \nabla_{\calM_{\xbar}} R(\xk) 
&= \tau_{k} \PT{T_{\xk}} \pa{ \hk } 
= \PT{T_{\xkm}} \pa{ \hk } + \Pa{\tau_{k} \PT{T_{\xk}} - \PT{T_{\xkm}}} \pa{ \hk } , \\
\gamma \nabla_{\calM_{\xbar}} R(\xkm) 
&= \PT{T_{\xkm}} \pa{ \hkm } .
\end{aligned}
\]
The difference of the above two equalities yields
\beq\label{eq:wk-xk}
\begin{aligned}
\gamma \tau_{k} \nabla_{\calM_{\xbar}} R(\xk) - \gamma \nabla_{\calM_{\xbar}} R(\xkm) - \Pa{\tau_{k} \PT{T_{\xk}} - \PT{T_{\xkm}}} \pa{ \hkm } 
&= \PT{T_{\xkm}} \pa{ \hk - \hkm } + \Pa{\tau_{k} \PT{T_{\xk}} - \PT{T_{\xkm}}} \pa{ \hk - \hkm } .
\end{aligned}
\eeq
Owing to the monotonicity of subdifferential, \ie $\iprod{\hk-\hkm}{\xk-\xkm} \geq 0$, we get
\[
\iprod{A^TA(\xk-\xkm)}{\xk-\xkm} 
\leq \iprod{A^T(\wk-\wkm)}{\xk-\xkm} 
\leq \norm{A}\norm{\wk-\wkm}\norm{\xk-\xkm} .
\]
Since $A$ has full column rank, $A^TA$ is symmetric positive definite, and there exists $\kappa > 0$ such that $\kappa \norm{\xk-\xkm}^2 \leq \iprod{A^TA(\xk-\xkm)}{\xk-\xkm}$. Back to the above inequality, we get $\norm{\xk-\xkm} \leq \frac{\norm{A}}{\kappa} \norm{\wk-\wkm}$. 
Therefore for $\norm{\hk-\hkm}$, we get
\[
\begin{aligned}
\norm{\hk - \hkm} 
=\norm{ A^T\pa{ A\xk - \wk } - A^T\pa{ A\xkm - \wkm } } 
\leq \norm{A}^2 \norm{\xk-\xkm} + \norm{A}\norm{\wk-\wkm} 
&\leq \Pa{ \sfrac{\norm{A}^3}{\kappa} + \norm{A}} \norm{\wk - \wkm} .
\end{aligned}
\]
As a result, owing to Lemma \ref{lem:parallel-translation}, we have for the term $\pa{\tau_{k} \PT{T_{\xk}} - \PT{T_{\xkm}}} \pa{ \hk - \hkm }$ in \eqref{eq:wk-xk} that
\[
\Pa{\tau_{k} \PT{T_{\xk}} - \PT{T_{\xkm}}} \pa{ \hk - \hkm }	\\
= o(\norm{ \hk-\hkm } ) 
= o(\norm{ \wk-\wkm } ) .
\] 
Define $\barR_{k-1}(x) \eqdef \gamma R(x) - \iprod{x}{\hkm} $ and $H_{\barR, k-1} \eqdef \PT{T_{\xkm}} \nabla_{\calM_{\xbar}}^2 \barR_{k-1}(\xkm) \PT{T_{\xkm}}$, then with Lemma \ref{lem:taylor-expn} the Riemannian Taylor expansion, we have for the first line of \eqref{eq:wk-xk}
\beq\label{eq:rie-hess-xk}
\begin{aligned}
 \gamma \tau_{k} \nabla_{\calM_{\xbar}} R(\xk) - \gamma \nabla_{\calM_{\xbar}} R(\xkm) - \Pa{\tau_{k} \PT{T_{\xk}} - \PT{T_{\xkm}}} \pa{\hkm} 
&= \tau_{k} \Pa{\gamma \nabla_{\calM_{\xbar}} R(\xk) - \PT{T_{\xk}} \pa{\hkm}} - \Pa{ \gamma \nabla_{\calM_{\xbar}} R(\xkm) - \PT{T_{\xkm}} \pa{\hkm} } \\
&= \tau_{k} \nabla_{\calM_{\xbar}} \barR_{k-1} (\xk) - \nabla_{\calM_{\xbar}} \barR_{k-1} (\xkm) \\
&= H_{\barR, k-1} (\xk-\xkm) + o(\norm{\xk-\xkm}) \\
&= H_{\barR, k-1} (\xk-\xkm) + o(\norm{\wk-\wkm}) .
\end{aligned}
\eeq
Back to \eqref{eq:wk-xk}, we get
\beq\label{eq:wk-xk-2}
\begin{aligned}
H_{\barR, k-1} (\xk-\xkm)
&= \PT{T_{\xkm}} \pa{ \hk - \hkm } + o(\norm{\wk-\wkm}) .
\end{aligned}
\eeq
Define $ \barR(x) \eqdef \gamma R(x) - \iprod{x}{ \bar{h} } $ and $H_{\barR} = \PT{T_{\xbar}} \nabla^2_{\calM_{\xbar}} \barR\pa{\xbar} \PT{T_{\xbar}}$, then from \eqref{eq:wk-xk-2} that
\beq\label{eq:xk-xkm-HbarR}
\begin{aligned}
H_{\barR} (\xk-\xkm) + \Pa{ H_{\barR, k-1} - H_{\barR} } (\xk-\xkm) 
&= \PT{T_{\xbar}} \pa{ \hk - \hkm } + \Pa{ \PT{T_{\xkm}} - \PT{T_{\xbar}} } \pa{ \hk - \hkm } + o(\norm{\wk-\wkm}) .
\end{aligned}
\eeq
Owing to continuity, we have $H_{\barR, k-1} \to H_{\barR}$ and $\PT{T_{\xkm}} \to \PT{T_{\xbar}}$, 
\[
\begin{gathered}
\lim_{k\to\pinf} \tfrac{ \norm{ \pa{ H_{\barR, k-1} - H_{\barR} } (\xk-\xkm) } }{ \norm{\xk-\xkm} }
\leq \lim_{k\to\pinf} \tfrac{ \norm{ H_{\barR, k-1} - H_{\barR} } \norm{ \xk-\xkm } }{ \norm{\xk-\xkm} }
= \lim_{k\to\pinf} \norm{ H_{\barR, k-1} - H_{\barR} } = 0 , \\
\lim_{k\to\pinf} \tfrac{ \norm{ \pa{ \PT{T_{\xkm}} - \PT{T_{\xbar}} } \pa{\wk - \wkm} } }{ \norm{\wk-\wkm} }
\leq \lim_{k\to\pinf} \tfrac{ \norm{ \PT{T_{\xkm}} - \PT{T_{\xbar}} } \norm{ \wk-\wkm } }{ \norm{\wk-\wkm} }
= \lim_{k\to\pinf} \norm{ \PT{T_{\xkm}} - \PT{T_{\xbar}} } = 0 , 
\end{gathered}
\]
and $\lim_{k\to\pinf} \tfrac{ \norm{ \pa{ \PT{T_{\xkm}} - \PT{T_{\xbar}}} \pa{\xk - \xkm} } }{ \norm{\xk-\xkm} } = 0$. 
Combining this with the definition of $\uk$, the fact that $\xk-\xkm = \PT{T_{\xbar}}(\xk-\xkm) + o(\norm{\xk-\xkm})$ from Lemma \ref{lem:proj-M}, and denoting $A_{T_{\xbar}} = A \circ \PT{T_{\xbar}}$, equation \eqref{eq:xk-xkm-HbarR} can be written as
\beq\label{eq:aaa}
\begin{aligned}
H_{\barR} (\xk-\xkm) 
= \PT{T_{\xbar}} \pa{ \uk - \ukm } + o(\norm{\wk-\wkm}) 
&= - \PT{T_{\xbar}} \pa{ A^T\pa{ A\xk - \wk } - A^T\pa{ A\xkm - \wkm } } + o(\norm{\wk-\wkm}) \\
&= - \PT{T_{\xbar}} A^T A \pa{ \xk - \xkm } + \PT{T_{\xbar}} A^T \pa{ \wk - \wkm } + o(\norm{\wk-\wkm}) \\
&= - A_{T_{\xbar}}^T A_{T_{\xbar}} \pa{ \xk - \xkm } + A_{T_{\xbar}}^T \pa{ \wk - \wkm } + o(\norm{\wk-\wkm}) .
\end{aligned}
\eeq
Since $A$ has full rank, so is $A_{T_{\xbar}}$. Hence $A_{T_{\xbar}}^TA_{T_{\xbar}}$ is invertible along $T_{\xbar}$ and from above we have
\[
\begin{aligned}
\Pa{ \Id + (A_{T_{\xbar}}^T A_{T_{\xbar}})^{-1} H_{\barR} } (\xk-\xkm) 
&= (A_{T_{\xbar}}^T A_{T_{\xbar}})^{-1} A_{T_{\xbar}}^T \pa{\wk-\wkm} + o(\norm{\wk-\wkm}) .
\end{aligned}
\]
Denote $M_{\barR} = A_{T_{\xbar}} \pa{ \Id + (A_{T_{\xbar}}^T A_{T_{\xbar}})^{-1} H_{\barR} }^{-1} (A_{T_{\xbar}}^T A_{T_{\xbar}})^{-1} A_{T_{\xbar}}^T$, then
\beq\label{eq:xkxkm-MbarR-wkwkm}
\begin{aligned}
A_{T_{\xbar}} (\xk-\xkm) 
&= M_{\barR} \pa{\wk-\wkm} + o(\norm{\wk-\wkm}) ,
\end{aligned}
\eeq 
which concludes the proof. \qedhere

\end{proof}

\section{Trajectory of ADMM}
\label{P-sec:tra-admm}

We first provide the fixed-point characterization of ADMM based on the equivalence between ADMM and Douglas--Rachford \cite{douglas1956numerical} and Peaceman--Rachford splitting \cite{peaceman1955numerical} methods, and then present the proofs for the trajectory of ADMM.

\subsection{Fixed-point characterization and convergence of ADMM} \label{sec:admm-dr-pr}

The dual problem of \eqref{eq:problem-admm} reads 
\beqn\tag{$\calD_{\mathrm{ADMM}}$}
\max_{\psi\in\bbR^p } - \Pa{R^*(-A^T \psi) + J^*(-B^T \psi) + \iprod{\psi}{b} } ,
\eeqn 
where $R^*(v) \eqdef \sup_{x\in\bbR^n} ~ \pa{ \iprod{x}{v} - R(x) }$ is called the Fenchel conjugate, or simply conjugate, of $R$. 

\subsubsection{Relaxed ADMM and Douglas--Rachford splitting}\label{subsec:admm-dr}
It is well-known that ADMM is equivalent to applying Douglas--Rachford splitting \cite{douglas1956numerical} to the dual problem \eqref{eq:problem-admm-dual}. 
Below we first recall the equivalence between ADMM and Douglas--Rachford which was first established in \cite{gabay1983chapter}, and then use the convergence of Douglas--Rachford splitting which is well established in the literature \cite{bauschke2011convex} to conclude the convergence of ADMM. 

For the sake of generality, we consider the following so called \emph{relaxed ADMM}
\[
\begin{aligned}
\xk &= \argmin_{x\in\bbR^n }~ R(x) + \tfrac{\gamma}{2} \norm{Ax+B\ykm-b + \tfrac{1}{\gamma}\psikm }^2 , \\
\xbark &= \phi A\xk - (1-\phi)(B\ykm-b) , \\
\yk &= \argmin_{y\in\bbR^m }~ J(y) + \tfrac{\gamma}{2} \norm{\xbark+By-b + \tfrac{1}{\gamma}\psikm }^2 , \\
\psik &= \psikm + \gamma\pa{\xbark + B\yk - b}  ,
\end{aligned}
\]
where $\phi \in [0, 2]$ is the relaxation parameter. When $\phi = 1$, the relaxed ADMM recovers the standard ADMM \eqref{eq:admm2}. Below show demonstrate that the relaxed ADMM is equivalent to the relaxed Douglas--Rachford applying to solve \eqref{eq:problem-admm-dual}. 
\begin{itemize}[leftmargin=2em]
\item 
Define $\zk = \psik - \gamma (B\yk - b)$, we have
\[
\begin{aligned}
\zk 
= \psik - \gamma B\yk + \gamma b 
= \psikm + \gamma \xbark 
&= \phi \psikm + \phi \gamma A\xk + (1-\phi) \psikm - (1-\phi) \gamma (B\ykm-b)  \\
&= (1-\phi)\zkm + \phi(\psikm + \gamma A\xk)  \\ 
&= (1-\phi)\zkm + \phi(\zkm + \uk - \psikm)  .
\end{aligned}
\]
When $\phi = 1$, we have $\zk = \psikm + \gamma A\xk$.

\item 
For the update of $\xk$, denote $\uk = \psikm + \gamma (A\xk + B\ykm - b)$. Since $A$ has full column rank, we have $\xk$ is the unique minimiser of $R(x) + \frac{\gamma}{2} \norm{Ax+B\ykm-b + \tfrac{1}{\gamma}\psikm }^2$. Let $R^*$ be the conjugate of $R$, then owing to duality, we get
\[
\begin{aligned}
\xk = \argmin_{x\in\bbR^n }~ R(x) + \sfrac{\gamma}{2} \norm{Ax+B\ykm-b + \tfrac{1}{\gamma}\psikm }^2 
\Longleftrightarrow\enskip & 
0 \in \partial R(\xk) + \gamma A^T\Pa{ A\xk + B\ykm - b + \tfrac{1}{\gamma}\psikm } \\
\enskip\Longleftrightarrow\enskip & 
- A^T \uk \in \partial R(\xk) \\
\enskip\Longleftrightarrow\enskip & 
\xk \in \partial R^*( - A^T \uk ) \\
\enskip\Longleftrightarrow\enskip & 
\uk - \gamma A \xk \in \uk + \gamma \partial (R^* \circ - A^T )( \uk ) \\
\enskip\Longleftrightarrow\enskip & 
\uk = \Pa{ \Id + \gamma \partial (R^* \circ - A^T ) }^{-1} ( \uk - \gamma A \xk ) \\
\enskip\Longleftrightarrow\enskip & 
\uk = \Pa{ \Id + \gamma \partial (R^* \circ - A^T ) }^{-1} ( 2\psikm - \zkm ) .
\end{aligned}
\]

\item For the update of $\yk$, the full column rank of $B$ also ensures that $\yk$ is the unique minimiser of $J(y) + \sfrac{\gamma}{2} \norm{\xbark+By-b + \frac{1}{\gamma}\psikm }^2$. Since $\psik = \psikm + \gamma \pa{\xbark+B\yk - b}$, then
\[
\begin{aligned}
 \yk = \argmin_{y\in\bbR^m }~ J(y) + \sfrac{\gamma}{2} \norm{\xbark+By-b + \tfrac{1}{\gamma}\psikm }^2 
\Longleftrightarrow\enskip & 
0 \in \partial J(\yk) + \gamma B^T\Pa{ \xbark + B\yk - b + \tfrac{1}{\gamma}\psikm } \\
\enskip\Longleftrightarrow\enskip & 
- B^T \psik \in \partial J(\yk) \\
\enskip\Longleftrightarrow\enskip & 
\yk \in \partial J^*(- B^T \psik) \\
\enskip\Longleftrightarrow\enskip & 
\psik - \gamma B \yk \in \psik + \gamma \partial (J^* \circ - B^T) (\psik) \\
\enskip\Longleftrightarrow\enskip & 
\psik = \Pa{\Id + \gamma \partial (J^* \circ - B^T)}^{-1} (\psik - \gamma B \yk) \\
\enskip\Longleftrightarrow\enskip & 
\psik = \Pa{\Id + \gamma \partial (J^* \circ - B^T)}^{-1} (\zk - \gamma b) .
\end{aligned}
\]

\item 
Combining all the relations we get
\beq\label{eq:dr-on-dual}
\begin{aligned}
\uk &= \Pa{ \Id + \gamma \partial (R^* \circ - A^T ) }^{-1} ( 2\psikm - \zkm ) , \\
\zk &= (1-\phi)\zkm + \phi(\zkm + \uk - \psikm) , \\
\psik &= \Pa{\Id + \gamma \partial (J^* \circ - B^T)}^{-1} (\zk - \gamma b ) ,
\end{aligned}
\eeq
which is exactly the iteration of Douglas--Rachford splitting applied to solve the dual problem \eqref{eq:problem-admm-dual} with . 

\end{itemize}
Define the following operator 
\[
\fDR = \sfrac{1}{2} \Id + \sfrac{1}{2} \bPa{ 2 \Pa{ \Id + \gamma \partial (R^* \circ - A^T ) }^{-1} - \Id } \bPa{ 2\Pa{\Id + \gamma \partial (J^* \circ - B^T)}^{-1} - \Id } ,
\]
then \eqref{eq:dr-on-dual} can be written as the fixed-point iteration in terms of $\zk$, that is
\[
\zk = \fDR(\zkm) . 
\]
It should be noted that for $\zk$ we have $\zk = \psik - \gamma B\yk + \gamma b = \psikm + \gamma A \xk$ which is the same as in \eqref{eq:admm2}. 
Owing to \cite{bauschke2011convex}, we have that $\fDR$ is firmly non-expansive with the set of fixed-points $\fix(\fDR)$ being non-empty, and there exists a fixed-point $\zsol \in \fix(\fDR)$ such that $\zk \to \zsol$ which concludes the convergence of $\seq{\zk}$. Then we have $\uk, \psik$ converging to $\psisol = \Pa{\Id + \gamma \partial (J^* \circ - B^T)}^{-1} (\zsol - \gamma b )$ which is a dual solution of the problem \eqref{eq:problem-admm-dual}. 
The convergence of the primal ADMM sequences $\seq{\xk}$ and $\seq{\yk}$ follows immediately.

Owing to the above equivalence between ADMM and Douglas--Rachford splitting, we get the following relations
\beq\label{eq:relations}
\begin{aligned}
\norm{\zk - \zkm}
&\leq \norm{\zkm - \zkmm} , \\
\norm{\psik - \psikm} 
&\leq \norm{\zk - \zkm} \leq \norm{\zkm - \zkmm} , \\
\norm{\uk-\ukm}
&\leq \norm{2\psikm - \zkm - 2\psikmm + \zkmm} \leq 3\norm{\zkm-\zkmm} , \\
\gamma\norm{A\xk - A\xkm}
&\leq \norm{\zk-\zkm} + \norm{\psikm-\psikmm} 
\leq 2\norm{\zkm-\zkmm} , \\
\gamma\norm{B\yk - B\ykm}
&\leq \norm{\zk-\zkm} + \norm{\psik-\psikm} \leq 2 \norm{\zkm-\zkmm} ,
\end{aligned}
\eeq
which are needed in the proofs below.

\subsubsection{Symmetric ADMM and Peaceman--Rachford splitting}\label{subsec:admm-pr}

Below we present a short discussion on the relation between the symmetric ADMM and Peaceman--Rachford splitting method, which was first established in \cite{gabay1983chapter}. 
\begin{itemize}[leftmargin=2em]

\item 
For the update of $\xk$, let $\uk = \psikmh = \psikm + \gamma (A\xk + B\ykm - b)$ and $\zk = \psik - \gamma B\yk + \gamma b$. As $A$ has full column rank, $\xk$ is the unique minimiser of $R(x) + \frac{\gamma}{2} \norm{Ax+B\ykm-b + \tfrac{1}{\gamma}\psikm }^2$. Then owing to duality,
\[
\begin{aligned}
\xk = \argmin_{x\in\bbR^n }~ R(x) + \tfrac{\gamma}{2} \norm{Ax+B\ykm-b + \tfrac{1}{\gamma}\psikm }^2 
\enskip\Longleftrightarrow\enskip & 
- A^T \uk \in \partial R(\xk) \\
\enskip\Longleftrightarrow\enskip & 
\xk \in \partial R^*( - A^T \uk ) \\
\enskip\Longleftrightarrow\enskip & 
\uk = \Pa{ \Id + \gamma \partial (R^* \circ - A^T ) }^{-1} ( \uk - \gamma A \xk ) \\
\enskip\Longleftrightarrow\enskip & 
\uk = \Pa{ \Id + \gamma \partial (R^* \circ - A^T ) }^{-1} ( 2\psikm - \zkm ) .
\end{aligned}
\]

\item For $\yk$, the full column rank of $B$ ensures the uniqueness of $\yk$. Since $\psik = \psikmh + \gamma \pa{A\xk+B\yk - b}$, then
\[
\begin{aligned}
 \yk = \argmin_{y\in\bbR^m }~ J(y) + \sfrac{\gamma}{2} \norm{A\xk+By-b + \tfrac{1}{\gamma}\psikmh }^2 
\enskip\Longleftrightarrow\enskip & 
- B^T \psik \in \partial J(\yk) \\
\enskip\Longleftrightarrow\enskip & 
\yk \in \partial J^*(- B^T \psik) \\
\enskip\Longleftrightarrow\enskip & 
\psik = \Pa{\Id + \gamma \partial (J^* \circ - B^T)}^{-1} (\psik - \gamma B \yk) \\
\enskip\Longleftrightarrow\enskip & 
\psik = \Pa{\Id + \gamma \partial (J^* \circ - B^T)}^{-1} (\zk - \gamma b) .
\end{aligned}
\]

\item For $\zk$, since $\uk = \psikmh$, 
\[
\begin{aligned}
\zk
= \psik - \gamma B\yk + \gamma b
= \uk + \gamma A\xk
= 2\uk - \psikm - \gamma(B\ykm - b) 
= \zkm + 2(\uk - \psikm)   .
\end{aligned}
\]
Combining the above relations we get
\beq\label{eq:pd-on-dual}
\begin{aligned}
\uk &= \Pa{ \Id + \gamma \partial (R^* \circ - A^T ) }^{-1} ( 2\psikm - \zkm ) , \\
\zk &= \zkm + 2(\uk - \psikm) , \\
\psik &= \Pa{\Id + \gamma \partial (J^* \circ - B^T)}^{-1} (\zk - \gamma b ) ,
\end{aligned}
\eeq
which is the iteration of Peaceman--Rachford splitting when applied to solve \eqref{eq:problem-admm-dual}. 

\end{itemize}
Define the following operator 
\[
\fPR = \Pa{ 2 \Pa{ \Id + \gamma \partial (R^* \circ - A^T ) }^{-1} - \Id } \bPa{ 2\Pa{\Id + \gamma \partial (J^* \circ - B^T)}^{-1} - \Id } ,
\]
then \eqref{eq:pd-on-dual} can be written as the fixed-point iteration in terms of $\zk$, that is
\[
\zk = \fPR(\zkm) . 
\]
It should be noted that for $\zk$ we have $\zk = \psik - \gamma B\yk + \gamma b = \psikm + \gamma A \xk$ which is the same as in \eqref{eq:admm_symmetric2}. 
Different to the case of Douglas--Rachford, the operator $\fPR$ is only non-expansive \cite{bauschke2011convex}, hence the conditions for $\zk$ to be convergent is stronger than that of $\fDR$. However, when it converges, it tends to be faster than Douglas--Rachford splitting \cite{gabay1983chapter}.

\subsection{Trajectory of ADMM: both $R, J$ are non-smooth}

Given a saddle point $(\xsol, \ysol, \psisol)$ of $\calL(x,y; \psi)$, the first-order optimality condition entails $- A^T \psisol \in {\partial R(\xsol)} $ and $- B^T \psisol \in {\partial J(\ysol)} $. 
Below we impose a stronger condition 
\beq\label{eq:ndc-admm}\tag{$\textrm{ND}$}
- A^T \psisol \in \ri\Pa{\partial R(\xsol)} 
\qandq
- B^T \psisol \in \ri\Pa{\partial J(\ysol)} .
\eeq
Suppose $R \in \PSF{\xsol}{\MmRx}, J \in \PSF{\ysol}{\MmJy}$ are partly smooth, denote $T_{\xsol}^R, T_{\ysol}^J$ the tangent spaces of $\MmRx, \MmJy$ at $\xsol, \ysol$, respectively. 
Define the following smooth perturbation of $R, J$,
\beq\label{eq:barR-barJ-admm}
\barR(x) \eqdef \sfrac{1}{\gamma} \Pa{ R(x) - \iprod{x}{-A^T \psisol } } ,~~~
\barJ(y) \eqdef \sfrac{1}{\gamma} \Pa{ J(y) - \iprod{w}{-B^T \psisol} } ,
\eeq
their Riemannian Hessian $H_{\barR} \eqdef \PR \nabla^2_{\MmRx} \barR\pa{\xsol} \PR, ~
H_{\barJ} \eqdef \PJ \nabla^2_{\MmJy} \barJ\pa{\ysol} \PJ$ and 
\beq\label{eq:BarBarQ}
\begin{gathered}
%
M_{\barR} \eqdef A_{R} \Pa{ \Id+(A_{R}^TA_{R})^{-1} {H_{\barR}} }^{-1} (A_{R}^TA_{R})^{-1} A_{R}^{T} , \\
M_{\barJ} \eqdef B_{J} \Pa{ \Id+(B_{J}^TB_{J})^{-1} {H_{\barJ}} }^{-1} (B_{J}^TB_{J})^{-1} B_{J}^{T} ,
\end{gathered}
\eeq
where $A_{R} \eqdef A \circ \PTR{\xsol}, B_J \eqdef B \circ \PTJ{\ysol}$. 
%
Finally, define 
\beq\label{eq:mtx-M-admm}
\mADMM 
\eqdef 
\sfrac12\Id + \sfrac12\pa{2M_{\barR}-\Id}\pa{2 M_{\barJ} - \Id} .
\eeq

\begin{proof}[Proof of Theorem \ref{prop:trajectory-admm}]

The proof of Theorem \ref{prop:trajectory-admm} is split into several steps: finite manifold identification of ADMM, local linearization based on partial smoothness, spectral properties of the linearized matrix, and the trajectory of $\seq{\zk}$. Let $(\xsol, \ysol, \psisol)$ be a saddle-point of $\calL(x,y; \psi)$.


\paragraph{1. Finite manifold identification of ADMM}
The finite manifold identification of ADMM is already discussed in \cite{liang2017localDR}, below we present a short discussion for the sake of self-consistency. 
At convergence of ADMM, owing to \eqref{eq:admm2} we have
\[
\begin{aligned}
A^T \psisol &= \gamma A^T\Pa{ A\xsol - \tfrac{1}{\gamma } \pa{ \zsol - 2\psisol } } \qandq
B^T \psisol &= \gamma B^T\Pa{ B\ysol - \tfrac{1}{\gamma } \pa{ \zsol - \gamma b } } .
\end{aligned}
\]
From the update of $\xk, \yk$ in \eqref{eq:admm2}, we have the following monotone inclusions
\beqn
\begin{aligned}
- \gamma A^T\Pa{ A\xk - \tfrac{1}{\gamma } \pa{ \zkm - 2\psikm } } &\in \partial R(\xk) &\!\!\textrm{and}\!\!&&
- \gamma B^T\Pa{ B\yk - \tfrac{1}{\gamma } \pa{ \zk - \gamma b } } &\in \partial J(\yk) , \\
- \gamma A^T\Pa{ A\xsol - \tfrac{1}{\gamma } \pa{ \zsol - 2\psisol } } &\in \partial R(\xsol) &\!\!\textrm{and}\!\!&&
- \gamma B^T\Pa{ B\ysol - \tfrac{1}{\gamma } \pa{ \zsol - \gamma b } } &\in \partial J(\ysol) .
\end{aligned}
\eeqn
Since $A$ is bounded, 
it then follows that
\[
\begin{aligned}
\dist\Pa{-A^T\psisol, \partial R(\xk)}
&\leq \gamma \norm{ A^T\Pa{ A\xk - \tfrac{1}{\gamma } \pa{ \zkm - 2\psikm } } - A^T\Pa{ A\xsol - \tfrac{1}{\gamma } \pa{ \zsol - 2\psisol } } } \\
&\leq \gamma \norm{A}\norm{ A(\xk-\xsol) - \tfrac{1}{\gamma }(\zkm-\zsol) + \tfrac{2}{\gamma } (\psikm -\psisol) } \\
&\leq \gamma \norm{A}\Pa{ \norm{A}\norm{\xk-\xsol} + \tfrac{1}{\gamma }\norm{\zkm-\zsol} + \tfrac{2}{\gamma } \norm{\psikm -\psisol} } 
\to 0 .
\end{aligned}
\]
and similarly
\[
\dist\Pa{-B^T\psisol, \partial J(\yk)}
\leq \gamma \norm{B}\Pa{ \norm{B}\norm{\yk-\ysol} + \tfrac{1}{\gamma }\norm{\zk-\zsol} } 
\to 0 .
\]
Since $R\in \lsc\pa{\bbR^n}$ and $J \in \lsc\pa{\bbR^m}$, then by the sub-differentially continuous property of them we have $R(\xk) \to R(\xsol)$ and $J(\yk) \to J(\ysol)$. 
Hence the conditions of \cite[Theorem~5.3]{hare2004identifying} are fulfilled for $R$ and $J$, and there exists $K$ large enough such that for all $k \geq K$, there holds
\[
(\xk, \yk) \in \MmRx \times \MmJy ,
\]
which is the finite manifold identification.


\paragraph{2. linearization of ADMM} 
For convenience, denote $\beta = 1/\gamma$. 
%
%
%
%
For the update of $\yk$, define $\wk = - \beta \pa{ \zk - \gamma b }$, we have from \eqref{eq:admm2} that
\[
\yk = \argmin_{y\in\bbR^m }~ \beta J(y) + \sfrac{1}{2} \norm{By - \wk }^2  . 
\]
Owing to the optimality condition of a saddle point, define $ \barJ(y) \eqdef { \beta J(y) - \iprod{y}{ - \beta B^T \psisol } }$ and its Riemannian Hessian $H_{\barJ} = \PTJ{\ysol} \nabla^2_{\MmJy} \barJ \pa{\ysol} \PTJ{\ysol}$. For $B$, define $B_J = B \circ \PTJ{\ysol}$, and $ M_{\barJ} = B_{J} \pa{\Id + (B_{J}^TB_{J})^{-1}H_{\barJ} }^{-1} (B_{J}^TB_{J})^{-1} B_{J}^T $. 
Then owing to Lemma \ref{lem:lin-generalised-ppa}, we get 
\beq\label{eq:ukp-usol-admm}
\begin{aligned}
B_J (\yk-\ykm) 
&= M_{\barJ} (\wk - \wkm) + o(\norm{\wk-\wkm}) \\
&= - \beta M_{\barJ} (\zk - \zkm) + o(\norm{\zk-\zkm}) .
\end{aligned}
\eeq
%
%
%
Now consider $\xk$ and let $\wk = \beta \pa{ \zkm - 2\psikm }$, we get from \eqref{eq:admm2} that 
\[
\xk = \argmin_{x\in\bbR^n }~ \beta R(x) + \sfrac{1}{2} \norm{Ax - \wk }^2 .
\]
Define $ \barR(x) \eqdef \beta R(x) - \iprod{x}{ - \beta A^T\psisol } $ and its Riemannian Hessian $H_{\barR} = \PTR{\xsol} \nabla^2_{\MmRx} \barR\pa{\xsol} \PTR{\xsol}$. 
Denote $A_{R} = A \circ \PTR{\xsol}$, and $ M_{\barR} = A_{R} \pa{ \Id + (A_{R}^T A_{R})^{-1} H_{\barR} }^{-1} (A_{R}^T A_{R})^{-1} A_{R}^T $. 
Note from \eqref{eq:admm2} that $\psikm - \psikmm = \zkm - \zkmm + \gamma B (\ykm - \ykmm)$, then
\[
\begin{aligned}
\wk - \wkm
&= \beta (\zkm-\zkmm) - 2\beta (\psikm-\psikmm) \\
&= - \beta (\zkm-\zkmm) - 2\beta \gamma B (\ykm-\ykmm) \\
&= - \beta (\zkm-\zkmm) - 2 B_{J} (\ykm-\ykmm) + o(\norm{\ykm-\ykmm}) ,
\end{aligned}
\]
where $\ykm-\ykmm = \PTR{\xsol}(\ykm-\ykmm) + o( \norm{\ykm-\ykmm} )$ from Lemma \ref{lem:proj-M} is applied. 
From \eqref{eq:relations}, we have $o(\norm{\ykm-\ykmm}) = o(\norm{\zkm-\zkmm})$ and $o(\norm{\wkm-\wkmm}) = o(\norm{\zkm-\zkmm})$, then applying Lemma \ref{lem:lin-generalised-ppa} yields, 
\beq\label{eq:xkzk-admm}
\begin{aligned}
A_{R} (\xk-\xkm) 
&= M_{\barR} (\wk - \wkm) + o(\norm{\wk-\wkm}) \\
&= - \beta M_{\barR} \pa{\zkm-\zkmm} + 2 M_{\barR} B_{J} \pa{\ykm-\ykmm} + o(\norm{ \zkm-\zkmm } ) \\
&= - \beta M_{\barR} \pa{\zkm-\zkmm} + 2 \beta M_{\barR} M_{\barJ} (\zkm - \zkmm) + o(\norm{ \zkm-\zkmm } ) . 
\end{aligned}
\eeq 
Finally, from \eqref{eq:admm2}, \eqref{eq:ukp-usol-admm} and \eqref{eq:xkzk-admm}, we have that
\[
\begin{aligned}
\zk - \zkm
&= \Pa{ \zkm + \gamma\pa{A\xk + B\ykm - b} } - \Pa{ \zkmm + \gamma\pa{A\xkm + B\ykmm - b} } \\
&= (\zkm-\zkmm) + \gamma A(\xk-\xkm) + \gamma B (\ykm-\ykmm) \\
&= (\zkm-\zkmm) + \gamma A_{R}(\xk-\xkm) + \gamma B_J (\ykm-\ykmm) + o(\norm{ \zkm-\zkmm } ) \\
&= (\zkm-\zkmm) - M_{\barR} \pa{\zkm-\zkmm} + 2 M_{\barR} M_{\barJ} (\zkm-\zkmm) + M_{\barJ} (\zkm-\zkmm) + o(\norm{\zkm-\zkmm} ) \\
&= \Pa{ \Id + 2M_{\barR}M_{\barJ} - M_{\barR} - M_{\barJ} } (\zkm-\zkmm) + o(\norm{ \zkm-\zkmm } ) ,
\end{aligned}
\]
which is the desired linearization of ADMM.


\paragraph{3. Spectral properties of $\mADMM$}

Consider first the case where both $R, J$ are general partly smooth functions, under which we can shown the non-expansiveness of $\mADMM$. 
For $M_{\barR}$, since $A$ is injective, so is $A_{R}$, then $A_{R}^TA_{R}$ is {symmetric positive definite}. Therefore, we have the following similarity result for $M_{\barR}$,
\beq\label{eq:BarU-sim}
\begin{aligned}
M_{\barR}
&= A_{R} \bPa{ (A_{R}^T A_{R})^{-\frac{1}{2}} \Pa{\Id + (A_{R}^T A_{R})^{-\frac{1}{2}} {H_{\barR}} (A_{R}^T A_{R})^{-\frac{1}{2}} } (A_{R}^T A_{R})^{\frac{1}{2}} }^{-1} (A_{R}^T A_{R})^{-1}A_{R}^T \\
&= A_{R} (A_{R}^T A_{R})^{-\frac{1}{2}} \Pa{\Id + (A_{R}^T A_{R})^{-\frac{1}{2}} {H_{\barR}} (A_{R}^T A_{R})^{-\frac{1}{2}} }^{-1} (A_{R}^T A_{R})^{\frac{1}{2}} (A_{R}^T A_{R})^{-1}A_{R}^T \\
&= A_{R} (A_{R}^T A_{R})^{-\frac{1}{2}} \Pa{\Id + (A_{R}^T A_{R})^{-\frac{1}{2}} {H_{\barR}} (A_{R}^T A_{R})^{-\frac{1}{2}} }^{-1} (A_{R}^T A_{R})^{-\frac{1}{2}} A_{R}^T .
\end{aligned}
\eeq
Since $(A_{R}^T A_{R})^{-\frac{1}{2}} {H_{\barR}} (A_{R}^T A_{R})^{-\frac{1}{2}}$ is symmetric positive definite, hence maximal monotone, then the matrix $$\pa{\Id + (A_{R}^T A_{R})^{-\frac{1}{2}} {H_{\barR}} (A_{R}^T A_{R})^{-\frac{1}{2}} }^{-1}$$ is firmly non-expansive. 
Let $A_{R} = USV^T$ be the SVD of $A_R$, then we have
\[
\begin{aligned}
\norm{A_{R} (A_{R}^T A_{R})^{-\frac{1}{2}}}
= \norm{USV^T (VSU^T USV^T)^{-\frac{1}{2}}} 
= \norm{USV^T (VS^2V^T)^{-\frac{1}{2}}} 
= \norm{USV^T VS^{-1} V^T} 
= 1 .
\end{aligned}
\]
Then owing to \cite[Example 4.14]{bauschke2011convex}, $M_{\barR}$ is firmly non-expansive. Similarly, $M_{\barJ}$ is firmly non-expansive, and so is $\mADMM$ \cite[Proposition 4.31]{bauschke2011convex}. Therefore, the power $\mADMM^k$ is convergent.

Now suppose that both $R, J$ are locally polyhedral around $(\xsol, \ysol)$, then $M_{\barR}$ and $M_{\barJ}$ become
\[
M_{\barR} = A_{R} (A_{R}^T A_{R})^{-1} A_{R}^T 
\qandq
M_{\barJ} = B_{J} (B_{J}^TB_{J})^{-1} B_{J}^T ,
\]
which are projection operators onto the ranges of $A_R$ and $B_J$, respectively. 
Denote these two subspaces by $T_{A_R}$ and $T_{B_J}$, and correspondingly $\proj_{T_{A_R}} \eqdef A_{R} (A_{R}^T A_{R})^{-1} A_{R}^T$ and $\proj_{T_{B_J}} \eqdef B_{J} (B_{J}^TB_{J})^{-1} B_{J}^T$. Then
\[
\mADMM = \proj_{T_{A_R}} \proj_{T_{B_J}} + (\Id - \proj_{T_{A_R}})(\Id - \proj_{T_{B_J}}) .
\]
Denote the dimension of $T_{A_R}, T_{B_J}$ by $\dim(T_{A_R}) = p, \dim(T_{B_J}) = q$, and the dimension of the intersection $\dim(T_{A_R} \cap T_{B_J}) = d$. Without the loss of generality, we assume that $1 \leq p \leq q \leq n$. 
Consequently, there are $r=p-d$ principal angles $(\zeta_i)_{i=1,...,r}$ between $T_{A_R}$ and $T_{B_J}$ that are strictly greater than $0$ and smaller than $\pi/2$. Suppose that $\zeta_1\leq \dotsm \leq \zeta_r$. 
Define the following two diagonal matrices
\[
C = \diag\Pa{ \cos(\zeta_1), \dotsm, \cos(\zeta_r) } \qandq
S = \diag\Pa{ \sin(\zeta_1), \dotsm, \sin(\zeta_r) } .
\]
Owing to \cite{bauschke2014rate,demanet2016eventual}, there exists a real orthogonal matrix $U$ such that
\[
\mADMM
= U
\left[
\begin{array}{cc|cc}
C^2 & CS & 0 & 0\\
- CS & C^2 & 0 & 0\\
\hline
0 & 0 & 0_{q-p+2d} & 0\\
0 & 0 & 0 & \Id_{n-p-q}
\end{array}
\right]
U^T ,
\]
which indicates $\mADMM$ is normal and all its eigenvalues are inside unit disc.

Let $\mADMMinf = \lim_{k\to+\infty}\mADMM^k$ and $\tmADMM = \mADMM - \mADMMinf$, then we have
\beq\label{eq:tmADMM}
\tmADMM
= U
\left[
\begin{array}{cc|c}
C^2 & CS & 0 \\
- CS & C^2 & 0 \\
\hline
0 & 0 & 0_{n-2r}
\end{array}
\right]
U^T .
\eeq


\paragraph{4. Trajectory of ADMM}

Owing to the polyhedrality of $R$ and $J$, all the small $o$-terms in the linearization proof vanish and we get directly
\beq\label{eq:zkp-zk-poly}
\zk - \zkm
= \mADMM (\zkm - \zkmm) 
= \mADMM^k (z_{0} - z_{-1}) . 
\eeq
As $\vk \eqdef \zk-\zkm \to 0$, passing to the limit we get from above 
\[
0 = \lim_{k\to+\infty} \mADMM^k v_{0} = \mADMMinf v_{0} , 
\]
which means $v_{0} \in \ker(\mADMM)$ where $\ker(\mADMM)$ denotes the kernel of $\mADMM$. Since $\mADMMinf \mADMM^k = \mADMMinf$, we have $\vk \in \ker(\mADMM)$ holds for any $k\in\bbN$. Then from \eqref{eq:zkp-zk-poly} we have
\[
\vk
= (\mADMM - \mADMMinf) \vk
= \tmADMM \vkm .
\]
The block diagonal property of \eqref{eq:tmADMM} indicates that there exists an elementary transformation matrix $E$ such that
\[
\tmADMM
= U E
\begin{bmatrix}
B_{1} & & &\\
& \ddots & & \\
& & B_{r} &\\
& & & 0_{n-2r}\\
\end{bmatrix} 
E U^T ,
\]
where for each $i=1,...,r$, we have
\[
B_i = 
\cos(\zeta_i) \begin{bmatrix} \cos(\zeta_i) & \sin(\zeta_i) \\ - \sin(\zeta_i) & \cos(\zeta_i) \end{bmatrix}
\]
which is rotation matrix scaled by $\cos(\zeta_i)$. 
It is easy to show that, for each $i=1,...,d$, there holds
\[
\lim_{k\to\pinf} B_i^k = 0 ,
\]
since the spectral radius of $B_i$ is $\rho(B_i) = \cos(\zeta_i) < 1$.

\vgap

Suppose for some $1\leq e < r$, we have
\[
\zeta = \zeta_1 = \dotsm = \zeta_e < \zeta_{e+1} \leq \dotsm \leq \zeta_{r} .
\]
Consider the following decompositions
\[
\Gamma_1
= 
\begin{bmatrix}
B_{1} & & &\\
& \ddots & & \\
& & B_{e} &\\
& & & 0_{n-2e}
\end{bmatrix}
\qandq
\Gamma_2
= 
\begin{bmatrix}
B_{1} & & &\\
& \ddots & & \\
& & B_{r} &\\
& & & 0_{n-2r}\\
\end{bmatrix} 
 - \Gamma_1 .
\]
Denote $\eta = \frac{\cos(\zeta_{e+1})}{\cos(\zeta)}$, it is immediate to see that $\frac{1}{\cos^k(\zeta)} \Gamma_2^k = O( \eta^k ) \to 0$, and for each $i=1,...,e$
\[
\sfrac{1}{\cos(\zeta)} B_i = 
\begin{bmatrix} \cos(\zeta) & \sin(\zeta) \\ - \sin(\zeta) & \cos(\zeta) \end{bmatrix} 
\]
which is a circular rotation. 
Therefore, $\frac{1}{\cos(\zeta)} \Gamma_1$ is a rotation with respect to the first $2e$ elements. 
Denote $\uk = E U^T\vk$, then from $\vk = \tM \vkm = U E (\Gamma_1 + \Gamma_2) E U^T \vk$, we get 
\[
\uk 
= (\Gamma_1 + \Gamma_2) \uk
= (\Gamma_1 + \Gamma_2)^k u_0
= \Gamma_1^k u_0 + \Gamma_2^k u_0 ,
\]
which is an orthogonal decomposition of $\uk$. Define 
\[
s_k = \tfrac{1}{\cos^k(\zeta)} \Gamma_1^k u_1
\qandq
t_k = \tfrac{1}{\cos^k(\zeta)} \Gamma_2^k u_1 ,
\]
then we have that $\norm{s_{k}} = \norm{s_{k-1}}$ and $\iprod{s_{k}}{s_{k-1}} = \cos(\zeta) \norm{s_k}^2 $, and $t_k = O(\eta^k)$. 
As a result, for $\cos(\theta_k)$ we have
\beq\label{eq:cos-thetak-i}
\begin{aligned}
\cos(\theta_{k})
= \sfrac{ \iprod{\vk}{\vkm} }{ \norm{\vk} \norm{\vkm} } 
= \sfrac{ \iprod{\uk}{\ukm} }{ \norm{\uk} \norm{\ukm} } 
&= \sfrac{\iprod{ s_{k} + t_{k} }{ s_{k-1} + t_{k-1} }}{\norm{ s_{k} + t_{k} }\norm{ s_{k-1} + t_{k-1} }} \\ 
&= \sfrac{\iprod{ s_{k} }{ s_{k-1} }}{\norm{ s_{k} + t_{k} }\norm{ s_{k-1} + t_{k-1} }} + \sfrac{\iprod{ t_{k} }{ t_{k-1} }}{\norm{ s_{k} + t_{k} }\norm{ s_{k-1} + t_{k-1} }} \\
&= \sfrac{ \norm{s_k}^2\cos(\zeta) }{\norm{ s_k }^2 + \norm{t_k }^2} \cdot \sfrac{\norm{s_k + t_k}}{\norm{ s_{k-1} + t_{k-1} }} + O(\eta^{2k-1}) .
\end{aligned}
\eeq
Using the fact that
\[
\sfrac{ \norm{s_k}^2 \cos(\zeta) }{\norm{ s_k }^2 + \norm{t_k }^2} 
= \cos(\zeta) \Pa{1 - \norm{t_k }^2 + O(\norm{t_k }^4) } = \cos(\zeta) + O(\eta^{2k}) 
\qandq
\sfrac{\norm{s_k+t_k}}{\norm{ s_{k-1} + t_{k-1} }} \to 1 
\]
we conclude that $\cos(\theta_{k}) \to \cos(\zeta)$. 
As a matter of fact, we have $\cos(\theta_{k}) - \cos(\zeta) = O(\eta^{2k})$ which shows how fast $\cos(\theta_k)$ converges to $\cos(\zeta)$. 
\qedhere
\end{proof}

\subsection{Trajectory of ADMM: $R$ or/and $J$ is smooth}

Now we consider the case that at least one function out of $R, J$ is smooth. For simplicity, consider that $R$ is smooth and $J$ remains non-smooth. 
Assume that $R$ is locally $C^2$-smooth around $\xsol$, the Hessian of $R$ at $\xsol$ reads $\nabla^2 R(\xsol)$ which is positive semi-definite owing to convexity. 
Define $M_R \eqdef A \Pa{ \Id+ \tfrac{1}{\gamma} (A^TA)^{-1} \nabla^2 R(\xsol) }^{-1} (A^TA)^{-1} A^{T} $, and redefine
\beq\label{eq:mtx-dr-Rc2}
\mADMM \eqdef \sfrac12\Id + \sfrac12\pa{2M_R-\Id}\pa{2 \MJ - \Id} .
\eeq
%


\begin{proof}[{Proof of Proposition \ref{prop:trajectory-admm-real}}]

We prove the corollary in two steps.


\paragraph{1. Linearization of ADMM}

Following the above proof, we have for $\yk$ that
\[
B_J (\yk-\ykm) 
= \beta M_{\barJ} (\zk - \zkm) + o(\norm{\zk-\zkm}) .
\]
From \eqref{eq:admm2}, for $\xkp$ and $\xk$, since $R$ is globally smooth differentiable
\[
- A^T\Pa{ A\xk - \beta \pa{ \zkm - 2\psikm } } \in \beta \nabla R(\xk) 
\qandq
- A^T\Pa{ A\xkm - \beta \pa{ \zkmm - 2\psikmm } } \in \beta \nabla R(\xkm) ,
\]
which leads to, applying the local $C^2$-smoothness of $R$ around $\xsol$
\[
\begin{aligned}
&- A^T\Pa{ A\xk - \beta \pa{ \zkm - 2\psikm } } + A^T\Pa{ A\xkm - \beta \pa{ \zkmm - 2\psikmm } } \\
&= \beta \nabla R(\xk) - \beta \nabla R(\xkm) \\
&= \beta \nabla^2 R(\xkm) (\xk-\xkm) + o(\norm{\xk-\xkm}) \\
&= \beta \nabla^2 R(\xsol) (\xk-\xkm) + \beta \Pa{ \nabla^2 R(\xkm) - \nabla^2 R(\xsol) } (\xk-\xkm) + o(\norm{\xk-\xkm}) \\
&= \beta \nabla^2 R(\xsol) (\xk-\xkm) + o(\norm{\zkm-\zkmm}) .
\end{aligned}
\]
Using the fact that $A^TA$ is invertible and rearranging terms, we arrive at
\[
\begin{aligned}
& \Pa{ \Id + \beta (A^TA)^{-1} \nabla^2 R(\xsol) } (\xk-\xkm) + o(\norm{\zkm-\zkmm}) \\
&= \beta (A^TA)^{-1} A^T \pa{ \zkm - \zkmm } - 2 \beta (A^TA)^{-1} A^T \pa{ \psikm - \psikmm } + o(\norm{\zkm - \zkmm}) \\
&= - \beta (A^TA)^{-1} A^T \pa{ \zkm - \zkmm } + 2 (A^TA)^{-1} A^T B_{J} \pa{ \ykm - \ykmm } + o(\norm{\zkm - \zkmm}) ,
\end{aligned}
\]
which further leads to, denote $M_{R} = A \pa{ \Id + (A^T A)^{-1} H_{R} }^{-1} (A^T A)^{-1} A^T$
\[
\begin{aligned}
A (\xk-\xkm) 
&= - \beta M_{R} \pa{ \zkm - \zkmm } + 2 M_{R} B_{J} \pa{ \ykm - \ykmm } + o(\norm{\zkm - \zkmm}) \\
&= - \beta M_{R} \pa{\zkm - \zkmm} + 2 \beta M_{R} M_{\barJ} (\zkm - \zkmm) + o(\norm{ \zkm - \zkmm } ) .
\end{aligned}
\]
Finally, from \eqref{eq:admm2}, we have that
\[
\begin{aligned}
\zk - \zkm
&= \Pa{ \Id + 2M_{R}M_{\barJ} - M_{R} - M_{\barJ} } (\zkm - \zkmm) + o(\norm{ \zkm - \zkmm } ) .
\end{aligned}
\]



\paragraph{2. Trajectory of ADMM}

Since $A$ is full rank square matrix and hence invertible, from \eqref{eq:BarU-sim} we have
\[
\begin{aligned}
M_{R}
&= A \pa{ \Id + \tfrac{1}{\gamma} (A^T A)^{-1} \nabla^2 R(\xsol) }^{-1} (A^T A)^{-1} A^T \\
&= A (A^T A)^{-\frac{1}{2}} \bPa{\Id + \tfrac{1}{\gamma} (A^T A)^{-\frac{1}{2}} {\nabla^2 R(\xsol)} (A^T A)^{-\frac{1}{2}} }^{-1} (A^T A)^{-\frac{1}{2}} A^T \\
&\sim \bPa{\Id + \tfrac{1}{\gamma} (A^T A)^{-\frac{1}{2}} {\nabla^2 R(\xsol)} (A^T A)^{-\frac{1}{2}} }^{-1} ,
\end{aligned}
\]
where $\Pa{\Id + \tfrac{1}{\gamma} (A^T A)^{-\frac{1}{2}} {\nabla^2 R(\xsol)} (A^T A)^{-\frac{1}{2}} }^{-1}$ is symmetric positive definite. 
If we choose $\gamma$ such that $$\tfrac{1}{\gamma} \norm{(A^T A)^{-\frac{1}{2}} {\nabla^2 R(\xsol)} (A^T A)^{-\frac{1}{2}}} < 1, $$ then all the eigenvalues of $M_R$ are in $]1/2, 1]$, hence $W_R \eqdef 2 M_R - \Id$ is symmetric positive definite. Therefore, we get
\[
\begin{aligned}
\sfrac{1}{2}\Id + \sfrac{1}{2} W_R \Pa{2\MJ - \Id} 
&= W_R^{1/2} \bPa{ \sfrac{1}{2}\Id + \sfrac{1}{2} W_R^{1/2} \Pa{2\MJ - \Id} W_R^{1/2} } W_R^{-1/2} \\
&\sim \sfrac{1}{2}\Id + \sfrac{1}{2} W_R^{1/2} \Pa{2\MJ - \Id} W_R^{1/2} ,
\end{aligned}
\]
and $\bmADMM \eqdef \frac{1}{2}\Id + \frac{1}{2} W_R^{1/2} \pa{2\MJ - \Id} W_R^{1/2}$ is symmetric positive semi-definite with all eigenvalues in $[0,1]$. Hence, by similarity, the eigenvalues of $M$ are all real and contained in $[0,1]$.\qedhere

\end{proof}

\section{Adaptive acceleration for ADMM}
\label{sec:proof_a3dmm}

\subsection{Convergence of A$\!^3$DMM}

\begin{proof}[Proof of Proposition \ref{prop:convergence-lp}]
From \eqref{eq:perturbation-km}, we have that
\[
\zk 
= \calF(\zkm + \varepsilon_{k-1})
= \calF(\zkm) + \Pa{ \calF(\zkm + \varepsilon_{k-1}) - \calF(\zkm) } .
\]
Given any $\zsol \in \fix(\calF)$, since $\calF$ is firmly non-expansive, hence non-expansive, we have
\[
\norm{\zk - \zsol}
\leq \norm{ \calF(\zkm) - \calF(\zsol)} + \norm{ \calF(\zkm + \varepsilon_k) - \calF(\zkm) } 
\leq \norm{\zkm - \zsol} + \norm{ \varepsilon_{k-1} } ,
\]
which means that $\seq{\zk}$ is quasi-Fej\'er monotone with respect to $\fix(\calF)$. Then invoke \cite[Proposition 5.34]{bauschke2011convex} we obtain the convergence of the sequence $\seq{\zk}$. 
\qedhere
\end{proof}

\subsection{Acceleration guarantee of A$\!^3$DMM}

Recall the definition of $V_{k-1}, c_k, C_k$ and $\zbar_{k,s}$ in the beginning of the section. 
By definition,
\begin{equation}\label{eq:linear_rel}
V_k = M V_{k-1} .
\end{equation}
Define $E_{k,j} \eqdef V_k C_k^j - V_{k+1}$ for $j\geq 1$ and
\begin{equation}\label{eq:e0}
E_{k,0}\eqdef V_{k-1} C_k - V_{k} = \begin{bmatrix}
(V_{k-1} c_k - v_{k}) & 0 & \cdots & 0
\end{bmatrix}.
\end{equation}
We obtain the relation between the extrapolated point $\zbar_{k,s}$ and the $(k+s)$'th point of $\seq{\zk}$
$$
\zbar_{k,s} = z_{k} + \msum_{j=1}^s \pa{v_{j+k} + (E_{k,j})_{(:,1)}}
= z_{k+s} + \msum_{j=1}^s (E_{k,j})_{(:,1)}
$$

In the following, given a matrix $M$, we let $\rho(M)$ denote the spectral radius of $M$ and $\lambda(M)$ denote its spectrum.
\begin{proof}[Proof of Proposition \ref{prop:extrap-error}]
We first prove $(i)$ that the extrapolation error is controlled by the coefficients fitting error. 
%
%
Since $k\in \NN$ is fixed, for ease of notation, we also write $E_\ell = E_{k,\ell}$ and $C=C_k$.
We first show that
for $\ell \in \NN$, we have
\begin{equation}\label{eq:En}
E_\ell = \msum_{j=1}^\ell M^j E_0 C^{\ell-j}.
\end{equation}
We prove this by induction.
Note that
\begin{align*}
V_{k} C &\overset{\eqref{eq:linear_rel}}{=}\pa{M V_{k-1}} C \overset{\eqref{eq:e0}}{=} M V_k + M E_0 \overset{\eqref{eq:linear_rel}}{=} V_{k+1} + M E_0.
\end{align*}
Therefore, $E_1 = M E_0 $ as required. Assume that \eqref{eq:En} is true up to $\ell=m$. Then,
\begin{align*}
V_k C^{m+1} 
\overset{\eqref{eq:linear_rel}}{=}\pa{M V_{k-1} } C^{m+1} 
&\overset{\eqref{eq:e0}}{=} M V_k C^{m} + M E_0C^{m}= M \pa{V_{m+k} + E_{m}} + M E_0C^{m} \\
&\overset{\eqref{eq:linear_rel}}{=} V_{m+2} + M E_{m} + M E_0C^{m} 
\end{align*}
So, plugging in our assumption on $E_m$, we have
\begin{align*}
E_{m+1} 
&= M E_{m} + M E_0C^{m} 
= M E_0C^{m} + M \Pa{ \msum_{j=1}^m M^j E_0 C^{m-j} } 
= \msum_{j=1}^{m+1} M^{j} E_0 C^{m+1-j}.
\end{align*}
To bound the extrapolation error,
\begin{align*}
 \msum_{m=1}^s E_m 
 &= \msum_{m=1}^s\pa{ \msum_{j=1}^m M^j E_0 C^{m-j} }
 = \msum_{\ell=0}^{s-1} \Pa{\msum_{j=1}^{s-\ell} M^j} E_0 C^\ell 
 = \msum_{\ell=1}^{s} M^\ell E_0 \Pa{\msum_{i=0}^{s-\ell} C^i}
\end{align*}
Therefore, 
\begin{align*}
&\norm{\zbar_{k,s} - \zsol} \leq \norm{z_{k+s} - \zsol} + \msum_{\ell=1}^s \norm{M^\ell} \norm{E_0} \norm{ \msum_{i=0}^{s-\ell} C^i_{(1,1)}}.
\end{align*}
In the case of $s=+\infty$, we have
\begin{align*}
&\norm{\zbar_{k,\infty} - \zsol} 
\leq \msum_{\ell=1}^\infty \norm{M^\ell} \norm{E_0 (\Id - C)^{-1}_{(:,1)}} 
= \sfrac{\norm{E_0}}{1-\ssum_i c_i}\msum_{\ell=1}^\infty \norm{M^\ell} .
\end{align*}
The fact that $B_s$ is uniformly bounded in $s$ if $\rho(M)<1$ and $\rho(C)<1$ follows because this implies that $\sum_{\ell=1}^\infty \norm{M^\ell}<\infty$ thanks to the Gelfand formula, and $\sum_{i=0}^\infty C^i = (\Id - C)^{-1}$ and its $(1,1)^{th}$ entry is precisely $\frac{1}{1-\sum_i c_i}$.
Since $k\in \NN$ is fixed, for ease of notation, we also write $E_\ell = E_{k,\ell}$ and $C=C_k$.
We first show that
for $\ell \in \NN$, we have
\begin{equation}\label{eq:En}
E_\ell = \msum_{j=1}^\ell M^j E_0 C^{\ell-j}.
\end{equation}
We prove this by induction.
Note that
\begin{align*}
V_{k} C 
&\overset{\eqref{eq:linear_rel}}{=}\pa{M V_{k-1}} C \overset{\eqref{eq:e0}}{=} M V_k + M E_0 \overset{\eqref{eq:linear_rel}}{=} V_{k+1} + M E_0.
\end{align*}
Therefore, $E_1 = M E_0 $ as required. Assume that \eqref{eq:En} is true up to $\ell=m$. Then,
\begin{align*}
V_k C^{m+1} &\overset{\eqref{eq:linear_rel}}{=}\pa{M V_{k-1} } C^{m+1} \\
&\overset{\eqref{eq:e0}}{=} M V_k C^{m} + M E_0C^{m}= M \pa{V_{m+k} + E_{m}} + M E_0C^{m} \\
&\overset{\eqref{eq:linear_rel}}{=} V_{m+2} + M E_{m} + M E_0C^{m} .
\end{align*}
So, plugging in our assumption on $E_m$, we have
\begin{align*}
E_{m+1} 
&= M E_{m} + M E_0C^{m} = M E_0C^{m} + M \Pa{ \msum_{j=1}^m M^j E_0 C^{m-j} } 
= \msum_{j=1}^{m+1} M^{j} E_0 C^{m+1-j}.
\end{align*}
To bound the extrapolation error,
\begin{align*}
 \sum_{m=1}^s E_m 
 &= \sum_{m=1}^s\Pa{ \msum_{j=1}^m M^j E_0 C^{m-j} }
 = \sum_{\ell=0}^{s-1} \Pa{\msum_{j=1}^{s-\ell} M^j} E_0 C^\ell 
 = \sum_{\ell=1}^{s} M^\ell E_0 \Pa{\msum_{i=0}^{s-\ell} C^i}
\end{align*}
Therefore, 
\begin{align*}
&\norm{\bar z_{k,s} - \zsol} 
\leq \norm{z_{k+s} - \zsol} + \msum_{\ell=1}^s \norm{M^\ell} \norm{E_0} \norm{ \msum_{i=0}^{s-\ell} C^i_{(1,1)}}.
\end{align*}
In the case of $s=+\infty$, we have
\begin{align*}
&\norm{\bar z_{k,\infty} - \zsol} \leq \sum_{\ell=1}^\infty \norm{M^\ell} \norm{E_0 (\Id - C)^{-1}_{(:,1)}} 
= \mfrac{\norm{E_0}}{1-\ssum_i c_i}\msum_{\ell=1}^\infty \norm{M^\ell} .
\end{align*}
The fact that $B_s$ is uniformly bounded in $s$ if $\rho(M)<1$ and $\rho(C)<1$ follows because this implies that $\sum_{\ell=1}^\infty \norm{M^\ell}<\infty$ thanks to the Gelfand formula, and $\sum_{i=0}^\infty C^i = (\Id - C)^{-1}$ and its $(1,1)^{th}$ entry is precisely $\frac{1}{1-\sum_i c_i}$.

%
%
%
%
%
%
To control the 
coefficients fitting error $\epsilon_k$, 
we follow closely the arguments of \cite[Section 6.7]{sidi2017vector}, since this amounts to understanding the behaviour of the coefficients $c_k$, which are precisely the MPE coefficients. 
Recall our assumption that $M$ is diagonalisable, so $M = U^\top \Sigma U$ where $U$ is an orthogonal matrix and $\Sigma$ is a diagonal matrix with the eigenvalues of $M$ as its diagonal. Then, letting $u_k \eqdef U v_k$,
\begin{align*}
\epsilon_k &= \min_{c\in \RR^{q}} \norm{\msum_{i=1}^{q} c_i v_{k-i} - v_{k}}\\
&= \min_{c\in \RR^{q}} \norm{\msum_{i=1}^{q} c_i \Sigma^{k-i} u_0 - \Sigma^{k} u_0}
= \min_{g\in\calP_q}\norm{ \Sigma^{k-q} g(\Sigma) u_0} 
\leq \norm{u_0} \min_{g\in \Pp_q}\max_{z\in \lambda(M)} \abs{z}^{k-q} \abs{g(z)} 
\end{align*}
where $\calP_q$ is the set of monic polynomials of degree $q$ and $\lambda(M)$ is the spectrum of $M$. Choosing $g = \prod_{j=1}^{q} (z-\lambda_j)$, we have $g(\lambda_j) = 0$ for $j=1,\ldots,q$, so
\begin{equation}
\epsilon_k \leq \norm{u_0} \abs{\lambda_{q+1}}^{k-q} \max_{\ell>q}\prod_{j=1}^{q} \abs{\lambda_j-\lambda_\ell}.
\end{equation}
The claim that $\rho(C_k)<1$ holds since the eigenvalues of $C$ are precisely the roots of the polynomial $Q(z) = z^{k-1} - \sum_{i=1}^{k-1} c_j z^{k-1-i}$, and from \cite{sidi2017vector}, if $\abs{\lambda_{q}}>\abs{\lambda_{q+1}}$, then $Q$ has precisely $q$ roots $r_1,\ldots, r_q$ satisfying $r_j = \lambda_j + \Oo(\abs{\lambda_{q+1}/\lambda_j}^k)$. So, $\abs{r_j}<1$ for all $k$ sufficiently large.
To prove the non-asymptotic bounds on $\epsilon_k$, first
observe that $z_{k+1} - z_{k} = M(z_k - z_{k-1})$ implies $z_{k+1} - \zsol = M(z_k - z_{*})$ and $z_{k+1} - z_k = (M-\Id)(z_k - \zsol)$. So, letting $\gamma_i = -c_{k,i}/(1-\sum_i c_{k,i})$ for $i=1,\ldots, q$ and $\gamma_0 = 1/(1-\sum_i c_{k,i})$, we have
\begin{equation}\label{eq:coefficient_vec}
\mfrac{1}{1-\ssum_i c_{k,i}} \Pa{ v_{k} -\msum_{i=1}^{q} c_{k,i} v_{k-i} } 
= \msum_{i=0}^{q} \gamma_i v_{k-i} 
= (M-\Id) \msum_{i=0}^{q} \gamma_i( z_{k-i-1} - \zsol).
\end{equation}
Now, $y\eqdef \sum_{i=0}^{q} \gamma_i z_{k-i-1} $ is precisely the MPE update and norm bounds on this are presented in \cite{sidi2017vector}. For completeness, we reproduce their arguments here: Let $A\eqdef \Id - M$, by our assumption of $\lambda(M) \subset (-1,1)$, we have that $A$ is positive definite. Then,
\begin{align*}
\norm{A^{1/2}(y - \zsol)}^2 
&= \dotp{A (y-\zsol)}{(y-\zsol)} \\
&= - \dotp{\ssum_{i=0}^{q} \gamma_i v_{k-i} }{(y-\zsol) + w}
\end{align*}
where $w = \sum_{j=1}^{q} a_j v_{k-j}$ with $a\in \RR^{q}$ being arbitrary, since by definition of $\gamma$, $\dotp{\sum_{i=0}^{q} \gamma_i v_{k-i}}{v_\ell}=0$ for all $\ell=k-q,\ldots, k-1$. We can write
$$
w = \sum_{j=1}^{q} a_j (M-\Id)(z_{k-j-1}-\zsol) = \sum_{j=1}^{q} a_j (M-\Id)M^{k-j-1}(z_{0}-\zsol) = f(M) (z_0 - \zsol)
$$
where $f(z) = z^{k-q-1} (z-1) \sum_{j=1}^{q} a_j z^{q-j}$, and we can write
$$
y - \zsol = \sum_{i=0}^{q} \gamma_i M^{k-i-1} (z_0- \zsol) = g(M)(z_0-\zsol)
$$
where $g(z) = z^{k-q-1}\sum_{i=0}^{q} \gamma_i z^{q-i}$. Therefore, $f(z) + g(z) = z^{k-1-q} h(z)$, where $h$ is a polynomial of degree $q$ such that $h(1) = 1$. Moreover, since the coefficients $a_j$ are arbitrary, $h$ can be considered as an arbitrary element of $\tilde \Pp_q$, the set of all polynomials of degree $q$ such that $h(1) = 1$. Therefore
\begin{align*}
\norm{A^{-1/2}(y - \zsol)}^2 &\leq \norm{A^{-1/2}(y - \zsol)} \min_{h\in \tilde \Pp_q}\norm{M^n h(M) (z_0 - \zsol)} \\
&\leq \norm{A^{-1/2}(y - \zsol)} \min_{h\in \tilde \Pp_q}\max_{t\in \lambda(M)}\abs{t^n h(t)} \norm{z_0 - \zsol}
\end{align*}
In particular, combining this with \eqref{eq:coefficient_vec}, we have
$$
\mfrac{\epsilon_k}{\abs{1-\ssum_i c_{k,i}}}\leq \norm{z_0-\zsol} \norm{(\Id-M)^{1/2}} \rho(M)^n \min_{h\in \tilde \Pp_q}\max_{t\in \lambda(M)}\abs{ h(t)} 
$$
Finally, in our case where $\lambda(M) =[\alpha,\beta]$ with $1>\beta>\alpha >-1$, it is well known that $\min_{h\in \tilde \Pp_q}\max_{t\in \lambda(M)}\abs{ h(t)}$ has an explicit expression (see, for example, \cite{borwein2003monic} or \cite[Section 7.3.1]{sidi2017vector}):
$$
\min_{h\in \tilde \Pp_q}\max_{z\in \lambda(M)} \abs{h(z)} \leq \max_{z\in \lambda(M)} \abs{h_*(z)},
$$
where $h_*(z) \eqdef \frac{T_q\pa{\frac{2z-\alpha-\beta}{\beta-\alpha}}}{T_q\pa{\frac{2-\alpha-\beta}{\beta-\alpha}}}$ where $T_q(x)$ is the $q^{th}$ Chebyshev polynomial and it is well known that
\begin{equation}
\min_{h\in \tilde \Pp_q}\max_{z\in [\alpha,\beta]} \abs{h(z)} 
\leq 2 \BPa{\sfrac{\sqrt{\eta}-1}{\sqrt{\eta}+1}}^q
\end{equation}
where $\eta = \frac{1-\alpha}{1-\beta}$. 
\qedhere

\end{proof}

\end{document}